\documentclass{amsart}
\usepackage{amssymb}
\usepackage{mathdots}
\usepackage[...]{youngtab}
\usepackage{tikz}
\usetikzlibrary{shapes,snakes}
\usepackage{framed}
\usepackage{tikz-cd}
\usepackage{amsmath}
\usepackage{txfonts}

\input xy
\xyoption{all}

\usepackage{amsfonts}
\usepackage{mathrsfs}

\usepackage{latexsym}
\usepackage{graphicx}
\usepackage{amscd,amssymb,amsmath,amsbsy,amsthm}
\usepackage[all]{xy}

\usepackage{xcolor}
\definecolor{darkred}{RGB}{139,0,0}
\usepackage{hyperref}
\hypersetup{colorlinks,linkcolor={darkred},citecolor={darkred},urlcolor={darkred}} 
\usepackage{verbatim}
%\usepackage{showkeys}

%-------------------------------------------------------------------------------
\usepackage{tikz-cd}
\usepackage{amsmath}
\usepackage{amsfonts}
\usepackage{amssymb}
\usepackage{pdfpages}
\usepackage{tikz}
\usepackage{asymptote}
\usetikzlibrary{decorations.markings,arrows}
\usepackage[all]{xy}
\usepackage{yfonts}
\usepackage[...]{youngtab}

\usepackage{mathrsfs}
\usepackage{latexsym}
\usepackage{graphicx}
\usepackage{amscd,amssymb,amsmath,amsbsy,amsthm}
\usepackage[all]{xy}
%\usepackage{fontspec}
%----------------------------------------------------------------------------------------

\newtheorem{theorem}{Theorem}[section]
\newtheorem{lemma}[theorem]{Lemma}
\newtheorem{corollary}[theorem]{Corollary}
\newtheorem{prop}[theorem]{Proposition}

\theoremstyle{definition}
\newtheorem{defn}[theorem]{Definition}
\newtheorem{ex}[theorem]{Example}

%----------------------------------------------------------------------------------------

\DeclareMathOperator{\id}{\mathbf{id}}

\DeclareMathOperator{\FF}{\mathbb{F}}
\DeclareMathOperator{\NN}{\mathbb{N}}
\DeclareMathOperator{\ZZ}{\mathbb{Z}}
\DeclareMathOperator{\QQ}{\mathbb{Q}}
\DeclareMathOperator{\RR}{\mathbb{R}}
\DeclareMathOperator{\CC}{\mathbb{C}}
\DeclareMathOperator{\PP}{\mathbb{P}}

\DeclareMathOperator{\KK}{\mathbb{K}}
\DeclareMathOperator{\TT}{\mathbb{T}}

\DeclareMathOperator{\rad}{\mathbf{rad}}
\DeclareMathOperator{\cG}{\mathcal{G}}
\DeclareMathOperator{\cZ}{\mathcal{Z}}
\DeclareMathOperator{\cN}{\mathcal{N}}

\DeclareMathOperator{\Mod}{\mathbf{Mod}}

\DeclareMathOperator{\rep}{\mathbf{rep}}

\DeclareMathOperator{\cO}{\mathcal{O}}
\DeclareMathOperator{\lcm}{\mathbf{lcm}}

\DeclareMathOperator{\fm}{\mathfrak{m}}
\DeclareMathOperator{\fb}{\mathfrak{b}}
\DeclareMathOperator{\fg}{\mathfrak{g}}
\DeclareMathOperator{\frp}{\mathfrak{p}}
\DeclareMathOperator{\bd}{\mathbf{d}}
\DeclareMathOperator{\sO}{\mathcal{O}}
\DeclareMathOperator{\SL}{\mathbf{SL}}
\DeclareMathOperator{\SU}{\mathbf{SU}}
\DeclareMathOperator{\GL}{\mathbf{GL}}
\DeclareMathOperator{\su}{\mathfrak{su}}
\DeclareMathOperator{\sln}{\mathfrak{sl}}
\DeclareMathOperator{\gl}{\mathfrak{gl}}

\DeclareMathOperator{\Spec}{\mathbf{Spec}}

\DeclareMathOperator{\Mat}{\mathbf{Mat}}
\DeclareMathOperator{\Hom}{\mathbf{Hom}}

\DeclareMathOperator{\cA}{\mathbf{\mathcal{A}}}
\DeclareMathOperator{\bA}{\mathbf{A}}

\newcommand{\la}{\langle}
\newcommand{\ra}{\rangle}

%----------------------------------------------------------------------------------------
\topmargin=0.1in
%\tmargin=30pt
%\bmargin=30pt
%\footskip=0pt

\hoffset=-20pt
%\voffset=0pt

%\headheight
%\headsep
\textwidth=5.5in
\textheight=8.0in

%\paperwidth
%\marginparpush
%\marginparwidth=-20pt
%\marginparsep
%\oddsidemargin=0.35in
%\evensidemargin=0.5in
%----------------------------------------------------------------------------------------

\newcount\cols
{\catcode`,=\active\catcode`|=\active
	\gdef\Young(#1){\hbox{$\vcenter
			{\mathcode`,="8000\mathcode`|="8000
				\def,{\global\advance\cols by 1 &}%
				\def|{\cr
					\multispan{\the\cols}\hrulefill\cr
					&\global\cols=2 }%
				\offinterlineskip\everycr{}\tabskip=2pt
				\dimen0=\ht\strutbox \advance\dimen0 by \dp\strutbox
				\halign
				{\vrule height \ht\strutbox depth \dp\strutbox##
					&&\hbox to \dimen0{\hss$##$\hss}\vrule\cr
					\noalign{\hrule}&\global\cols=2 #1\crcr
					\multispan{\the\cols}\hrulefill\cr%
				}
			}$}}
}

\title[Surface Algebras I: Dessins D'enfants, Surface Algebras, and Dessin Orders]
{Surface Algebras I: Dessins D'enfants, Surface Algebras, and Dessin Orders}

\author[A.~Schreiber]{Amelie~Schreiber}
\email{amelie.schreiber.math@gmail.com}

\subjclass[2010]{
	Primary
	11G32  	%Dessins d'enfants, Belyĭ theory
	14H57  	%Dessins d'enfants theory
	16E05  	%Syzygies, resolutions, complexes
	Secondary
	05E10  %Combinatorial aspects of representation theory
	05E15  	%Combinatorial aspects of groups and algebras
}

\date{\today}
\keywords{Artin L-functions, automorphic representations, absolute Galois group, dessin d'enfant, surface algebra, surface order, cartographic group, monodromy group, Weil group}

\setcounter{tocdepth}{1}

\begin{document}

	\begin{abstract}
	In this paper, a construction of an infinite dimensional associative algebra, which will be called a \emph{Surface Algebra}, is associated in a "canonical" way to a dessin d'enfant, or more generally, a cellularly embedded graph in a Riemann surface. Once the surface algebras are constructed we will see a construction of what we call here the associated \emph{Dessin Order} or more generally the \emph{Surface Order}. The surface orders can be constructed in such a way that they are the completion of the path algebra of a quiver with relations which gives the surface algebra of the dessin. This provides a way of associating to every algebraic curve $X$, with function field $k(X)$ (defined over an arbitrary field $k$) the representation theory of its Surface Algebra and the lattices over Surface Orders, which are defined as pullbacks of certain matrix algebras over commutative $k$-algebras. We will then be able to prove that the center and the (noncommutative) normalization of the surface orders are invariant under the action of the absolute Galois group $\cG(\overline{\QQ}/\QQ)$. We will see that the surface algebras and surface orders are closely related to the fundamental group(oid) of the Riemann surfaces and the associated monodromy group. A description of the projective resolutions of the simple modules over the surface order is given and it will be shown that one can completely recover the dessin with the projective resolutions of the simple modules alone. In particular, the projective resolutions of the simple modules encode all combinatorial and topological data of the monodromy group (or cartographic group) of a dessin. Finally, as a corollary we are able to say that classifying dessins in an orbit of $\cG(\overline{\QQ}/\QQ)$ is equivalent to classifying dessin orders with a given normalization. We end with some further examples of surface algebras and surface orders related to the classical and geometric version of the Langlands Program, and we give a sketch of some results to appear in an upcoming paper on Artin L-functions. 
	\end{abstract}

\maketitle

\textcolor{darkred}{\hrule}

\tableofcontents

\textcolor{darkred}{\hrule}
\medskip
\section{Introduction}

This paper is the beginning of an attempt to bring together several quite deep areas of mathematics, without being anything like an "expert" in more than one or two of them, if any. In particular, it aims to begin exposing techniques from quiver representation, and more generally, infinite dimensional associative algebras, incorporating ideas from modular representation theory of finite groups and group algebras and closely related Brauer graph algebras and gentle algebras. There are some applications of the theory of "\emph{lattices over orders}" used, and some use of basic covering theory of Riemann surfaces, Galois groups acting on extensions of number fields, and related combinatorics. There are indications of relations to loop groups, loop algebras and the Geometric Langlands Program, although this is a task we will take on in the followup papers \cite{AS2} and \cite{AS3}. The goal is not simply to present new results, but provide a somewhat new perspective on many old ideas and to begin building some kind of meaningful amalgamation of them all. There are many techniques which could be considered quite obscure, often inaccessible to most, and lacking clear and welcoming expositions. However, there are intuitive ways of approaching these tools used by the experts, and some explanation of these ideas are provided here. One of particular importance is the so-called functorial filtration technique, which was inspired by the Gel'fand-Ponomarev paper \cite{GP}, and first explained in obscure notes by P. Gabriel in \cite{G}, which are nowhere to be found aside from the "\emph{Benson archive}", and which were intended only for a small group of individuals meeting to discuss \cite{GP} and related representation theory in the early 70s. One resource which is also useful for this idea, and which will prove to be useful for ideas presented here is \cite{Hi}. 

In the following article we will define an infinite dimensional associative algebra which will be associated to a \emph{dessin d'enfant} (really to any cellularly embedded graph in a Riemann surface) in a very natural way which could be justifiably called a canonical construction. These algebras, which will be called \emph{surface algebras}, can be defined in terms of combinatorial objects determined by a dessin d'enfant, a cellularly embedded graph in a Riemann surface, or a Fuchsian or triangle group, called a \emph{quiver with relations}. The surface algebras are an infinite dimensional generalization of several classes of finite dimensional algebras which have been heavily studied since at least as far back as the classic paper by Gel'fand and Ponomarev on the \emph{Indecomposable Representations of the Lorentz Group} \cite{GP} and Gabriel's subsequent \emph{"functorial interpretation"}. In particular, these generalize Brauer graph algebras, group algebras of finite groups over algebraically closed fields of arbitrary characteristic, the so-called finite dimensional "gentle algebras", "string algebras", and "special biserial algebras". The completions of the surface algebras to thier surface orders, and other more general related surface orders are generalizations of "$R$-orders", used in the study of modular representation theory of group algebras $RG$ of finite groups $G$. So, once the surface algebras are defined, we define their $\fm$-adic completions with respect to the \emph{"arrow ideal"} $\fm$, and more generally we define certain formal matrix rings called \emph{surface orders}. Once these are defined we compute the center and the \emph{"noncommutative normalization"} and show that for the case of dessins d'enfants, these are invariant under the action of the absolute Galois group of the rationals $\cG(\overline{\QQ}/\QQ)$. For the case of arbitrary cellularly embedded graphs in Riemann surfaces, we identify the Galois groups of coverings of Riemann surfaces with quotients of the corresponding fundamental group, and we define an action of this group on the surface orders. This provides us with a way of understanding $n$-dimensional representations of the absolute Galois group $\cG(\overline{K}/K)$ of a number field $K$, and how they correspond to representations of $\GL_n(\bA_K)$, over the ring of adeles of $K$. More generally, it provides a way of defining an action of the Galois group of a field extension on certain matrix rings, over any discrete valuation ring or Dedekind domain. In particular, this will also work in the case of function fields and the GLP. This provides a way of defining characters of the cyclic subgroups of Galois groups (or indeed any automorphism group of a finite extension of number fields $K/F$, not necessarily Galois). We can then apply Artin's Theorem along with some invariant theory of $\GL_n$ to compute all characters of the Galois group and we can show that the equations defining characters can be written as $\ZZ$-linear combinations of characters of the cyclic subgroups. Moreover, this allows us to identify the characters defining Artin L-functions with semi-group rings which are coordinate rings of toric varieties, generated by polynomial invariants of a connected reductive algebraic group acting on the representations of the surface algebra and the surface order. 

\section{Historical Background and Motivations}

This section is not essential to the rest of the paper and may be skipped by anyone not interested in the historical motivations or the background on the theoretical tools used.

\subsection{Dessins D'Enfants and Cellularly Embedded Graphs}
Dessins d'enfants are, in the simplest terms, bipartite graphs embedded in a compact Riemann surface (without boundary). They are combinatorial tools used in the study of Inverse Galois Theory, orbits of the absolute Galois group of a number field, and \emph{"Belyi functions"}, which are coverings of the sphere $\PP_{\CC}^1$ ramified at most at three points, which can be assumed to be $\{0, 1, \infty\}$ up to a M\"{o}bius transformation. One way of constructing dessins is via a combinatorial object called a "\emph{$3$-constellation}" $C = [\sigma, \alpha, \phi]$, which is a triple of permutations such that,
\begin{enumerate}
\item The group $G = \la \sigma, \alpha, \phi \ra$ acts \textit{transitively} on $[n]$. 
\item The product $\sigma \alpha \phi = \id$, is the identity. 
\end{enumerate}
If we restrict to the case when $\alpha$ is a fixed point free involution, we obtain part of the standard definition of a Brauer graph (see the next section for details). In the case where we do not assume this restriction, this gives a \emph{"hypermap"} which corresponds uniquely to a bipartite graph embedded in a compact Riemann surface and to a dessin d'enfant. It also give a presentation of a triangle group 
\[ \Delta(p,q,r) := \la \gamma_0, \gamma_1, \gamma_{\infty} | \gamma_0^p = \gamma_1^q = \gamma_{\infty}^r = \gamma_0 \gamma_1 \gamma_{\infty} = 1 \  \ra,\]
(see \cite{JW}). These are special \emph{"Fuchsian groups"}. More generally, we may associate to a cellularly embedded graph a cocompact Fuchsian groups with generators $\{\gamma_j, \alpha_j, \beta_I\}$ relations of the form
\[ \gamma_1^{a_1} = \cdots = \gamma_r^{a_r} = \prod_{i=1}^g [\alpha_i, \beta_i] \prod_{j=1}^r \gamma_j = 1 \]
From such groups we may construct arbitrary Brauer graph algebras in a more or less canonical way, as a quotient of the corresponding surface algebra.

\subsection{Brauer Graph Algebras}
Brauer graph algebras are a class of algebras construct from a so-called \emph{"Brauer graph"}, which is defined as follows:

\begin{defn}
Fix any finite connected graph possibly with loops. Define a "cyclic ordering" of the edges around each vertex, then define a multiplicity $m_v$ of each vertex $v$, given by a positive integer $m_v \in \ZZ_{\geq 1}$. 
\end{defn}

The Brauer graph algebras are all finite dimensional associative algebras which can be obtained from a \emph{"quiver with relations"} (see \cite{B1}, \cite{E}, \cite{ASS}).  The quiver of the Brauer graph algebra associated to the Brauer graph can be realized as the same quiver as that obtained in the construction of Surface Algebras in \ref{Surface Algebras}. The relations on the quiver are quite cumbersome to define and are traditionally in terms of the simple and projective modules of the algebra, and extensions between them. For details the reader is referred to \cite{B1}, and \cite{E}. The motivation for the construction of such algebras comes from the modular representation theory of finite groups. In particular, for an algebraically closed field $K$ of arbitrary characteristic, the group algebra $KG$ is not in general semisimple. If the characteristic $\mathbf{char}(K)=p>0$ divides the order of the group $|G|$, then it decomposes into indecomposable "\emph{blocks}"
\[ KG = B_1 \oplus B_2 \oplus \cdots \oplus B_r.\]
It is well known such block are always Brauer graph algebras. There is extensive research on this subject by Karin Erdmann (see for example \cite{E, E1, E2, E3, E4}). A more complete list of references can be found in \cite{B1}. 

Using the methods developed in \cite{GR} and the Morita equivalence of a ring $R$ with the matrix ring $M_n(R)$ we are able to glue matrix rings together to obtain representations of automorphism groups of field extensions (not necessarily Galois). This can be interpreted geometrically via a pullback of vector bundles/sheaves with flat connection as in \cite{AS2}. We use this and the invariant theory developed in \cite{CCKW}\footnote{This paper uses methods developed in \cite{C}, \cite{CW}, and \cite{CC} to describe (semi)invariant rational and polynomial functions under the action of various reductive groups $\prod \GL(d_i)$ for a dimension vector $d = (d_1, d_2, ..., d_n)$ using the theory of Schur functors.}

\subsection{Gel'fand and Ponomarev, Indecomposable Modules of the Lorentz Group, and Special Biserial Algebras}
In \cite{GP}, Gel'fand and Ponomarev studied the indecomposable representations of the Lorentz group. In so doing they define a class of modules which they call \emph{"Harish-Chandra modules}. The study of such modules can be reduced to the study of indecomposable representations of the algebra $k[[x, y]]/(xy)$, as explained by Gabriel in \cite{G} in his functorial interpretation of the methods used in \cite{GP}. The paper \cite{GP} inspired extensive research in the years to come, which can still be seen in current research. In particular, the so-called \emph{"functorial filtration"} method, was then used along with aspects of covering theory in some cases in \cite{R1}, \cite{BR}, \cite{CB1, CB2, CB3, CB4}, \cite{He1, He2, He3, He4}, and potentially a countably infinite set of others. This method at its core is applying \emph{induction} and \emph{restriction} of representations in more or less the usual sense, along with some (directed) combinatorial algebraic topology and geometric group theory related to actions of groups on trees (see for example \cite{Gr}). These ideas seem to have already been more or less completely understood already by Gabriel and Ringel in \cite{G} and \cite{R1} in the early 70s following \cite{GP}, but seem to have been a technique not widely known for quite some time beyond a small group of people who applied the techniques regularly to prove many complicated results about the module categories of associative algebras. The class of algebras most heavily studied using these methods are the \emph{"special biserial algebras"}, which contain the class of Brauer graph algebras, as well as the so-called \emph{"gentle algebras"} and \emph{"string algebras"}. These are generalizations of the Gel'fand Ponomarev algebras, and Ringel's \emph{"dihedral agebras"} in \cite{R1}. 

It can be shown with very little work that every special biserial algebra is a quotient of a surface algebras as we have defined it here, and the universal cover of \emph{all} surface algebras is a bi-colored, directed Cayley graph of the free group on two generators (i.e. the underlying graph forgetting the orientation of the arrows is a four-regular tree). So, in some sense, the free group on two generators yields every group algebra of a finite group over algebraically closed fields of arbitrary characteristic via a quotient by a group action, and possibly additional quotients thereafter yielding admissible ideals. So, in a way, the surface algebras are a unifying class of algebras.

\subsection{Cluster Algebras and Surfaces}
Algebras defined using some combinatorial construction associated to a triangulation of a compact surface \emph{with boundary} have shown up in various places. One of the most significant is in the theory of cluster algebras of surface type (see for example \cite{FST}) where a cluster algebra structure was given to triangulations of compact Riemann surface without punctures. In \cite{ABCP} a quiver is associated to a triangulation of a compact Riemann surface, motivated by the association of a cluster algebra to a triangulated surface in \cite{FST}, and earlier similar constructions, and the fact that a quiver can always be associated to any skew-symmetric matrix (which is the starting point of the theory of cluster algebras). For more information on cluster algebras we refer to \cite{FZ1, FZ2, BFZ, FZ4}\footnote{The original motivation for the study of cluster algebras was to understand "canonical bases" and "total positivity". These ideas are prevalent in the work of Kashiwara (crystal bases and quantum groups), and Lusztig, who has produced a mass amount of work on p-adic representation theory of reductive algebraic groups (among other things), and various investigations into the Langlands philosophy. This work, especially the theory of Kazhdan-Lusztig polynomials will be important in understanding actions of Galois groups realized as subgroups of affine Coxeter groups acting on root systems corresponding to lattices over surface orders.} and \cite{DWZ1, DWZ2}\footnote{This work provides a connection to the theory of cluster algebras via quiver representations.}.

In nearly every case associating a quiver with \emph{"gentle relations"} to a surface, the gentle algebra is constructed from a \emph{triangulation} of a compact surface. In the case of Brauer graph algebras, it has recenty been noticed that such algebras can be related to a surface, for example in \cite{ES1, ES2, ES3, ES4}, \cite{GSS}, \cite{GSST}, \cite{MSc}, \cite{Sch}. The full implications and appropriate generalizations have yet to be understood. Further, all of these algebras are finite dimensional. Although it seems natural to extend the definition of these "surface algebras" to arbitrary cellularly embedded graphs in compact surfaces, to surfaces \emph{with or without} boundary and marked points, and to infinite dimensional algebras, little work in this direction has been done. Some ideas in this direction are hinted at, but a full generalization of such algebras to the case contained in this paper has yet to be constructed. Some examples which have only shown up very recently are the infinite dimensional string algebras in \cite{CB4}, the "surface algebras" recently constructed in \cite{ES1, ES2}, which are finite dimensional quotients of the surface algebras defined in this paper, and \cite{BH}. The construction of this paper yields a generalization of the constructions given from Brauer graph algebras, group graded algebras, special biserial algebras (gentle algebras especially), and cluster algebras of surface type. Moreover, all of these can be seen as quotients of the surface algebras defined in this paper, so many of the results from these various areas should be expected to generalize into a universal theory dealing with them all simultaneously.

\subsection{Covering Theory, Noncommutative Geometry, and Inverse Galois Theory}
In \cite{Gr}, \cite{GSS}, \cite{Sch}, \cite{He1, He2, He3, He4} methods of covering theory of associative algebras are applied. The paper \cite{GSS} give a construction of a tower of algebras similar in spirit to towers of groups that one might encounter in inverse Galois problems. Since group algebras (or their indecomposable blocks) are all Brauer graph algebras, this clearly has applications to Inverse Galois Theory. One would hope that extending and applying these methods to surface algebras and surface orders as defined in this paper would provide a "universal covering theory" for all special biserial algebras, as all such algebras are quotients of surface algebras. Moreover, the interpretation given here of the action of Galois groups yields a direct connection number theory and potential applications to Inverse Galois Theory. It is likely some form of "directed algebraic topology" will be the appropriate context for such a theory (see for example \cite{Hi}, \cite{Gra}). 

Moreover, this implies thinking of such topological constructions for quivers as a discrete (or algebraic) version of noncommutative (differential) topology. The "arithmetic noncommutative geometry" developed in \cite{Ma} should be the proper context for this theory applied to the \emph{surface orders} at infinite places of a number field (surface orders being an obviously arithmetic component of the theory developed in this paper). An overall goal should be to attempt to have a triad of theories all tied together by the theory of surface algebras and surface orders. The (directed/noncommutative) combinatorial algebraic topology used in \cite{Hi} and \cite{Gra} should be related to the "quiver component" of the theory, i.e. surface algebras; the arithmetic noncommutative geometry should be the surface orders over $p$-adic fields component of the theory; and the differential noncommutative geometry should be the proper tool for infinite places and "irrational limiting behavior". This should all tie together to form a complete local and global (adelic), and function field version of the theory. The work of Lebryun in \cite{L} will likely prove useful as well for a more algebro-geometric interpretation and for the study of moduli stacks.

From this noncommutative geometry point of view, we find connections to \cite{Ma} and \cite{Co}. The viewpoint of treating surface algebras and surface orders as discrete approximations of gluings of noncommutative tori (see \cite{Ma} and the material on noncommutative elliptic curves), has a discrete counterpart in the theory of surface algebras. In particular, realizing paths on a surface as modules in a module category (or lattices over orders, which by \cite{RR} is compatible with the quiver version), one has an interpretation of Hom and Ext spaces (morphisms and extensions in the module category or derived category), as intersections of paths on the surface. This provides some notions of "closeness" and "distance" in representation theoretic/discrete terms. Further, applying some of the ideas in \cite{G}, using the Cantor filtration of the module category of a surface algebra (or perhaps a generalization to the derived category), one may put a linear order on indecomposable modules, and the Hom and Ext spaces are then easily computed by lifting modules and applying the functorial filtration method. More remarks on this "modules as paths and/or geodesics" will be ellaborated on further below.

\subsection{$R$-Orders and Formal Matrix Rings}

One possible reason for not considering the class of algebras presented here is that they are infinite dimensional, and so the representation theory is thought to be much more difficult. Due to work of Crawley-Boevey \cite{CB4}, and results which can be deduced from \cite{R1}, \cite{GP}, and \cite{BR}, one can classify all indecomposable modules for the surface algebras as we have defined them. Moreover, the technology of "\emph{$R$-order}", which are a class of formal matrix rings (see \cite{KT}), allows one to realize the completions of the surface algebras with respect to the (maximal) arrow ideal. This allows one to use the methods of \emph{"lattices over orders"} to study the surface algebras. This technology also arose in the study of modular representations of finite groups and group algebras. We employ some of both perspectives, as both are useful and illuminating in their own way. For material on $R$-orders the reader is referred to \cite{CR1, CR2}, \cite{RR}, \cite{Ro1, Ro2}, \cite{K}, \cite{KR}.

\subsection{Nonabelian Class Field Theory}

An \emph{"adequate"} formulation of nonabelian class field theory, according to some of the literature (see for example \cite{Fr1, Fr2}, \cite{Ge}), seems to be very elusive, or at the very least, not so intuitive and difficult to state without an extensive background in abelian class field theory, reductive algebraic groups, algebraic geometry, and complex analysis. Some of the constructions contained in this paper and subsequent papers building off of the foundations given here will hopefully provide a new perspective on nonabelian class field theory which do not require quite as much background material as preparation, and as many of the constructions are very combinatorial, many of the problems and techniques can be phrased in very elementary terms, allowing more extensive study of the Langlands Program and its geometric generalization.

\subsection{Reductive Algebraic Groups and the Classical Langlands Program}

The technology of $R$-orders which is developed in this paper, inspired heavily by Kauer, provides a very concrete and clear interpretation of $n$-dimensional representations of a Galois group and how they correspond to the so-called \emph{"automorphic representations"} of reductive algebras groups. In particular, we are able to understand the action of the general automorphism groups of arbitrary extensions of number fields $F/K$, not necessarily Galois. The realization of such representations is given by an action on a pullback of Lie algebras corresponding to ramified primes. In particular, the constructions gives a way of understanding representations for \emph{ramified and unramified} primes simultaneously, \emph{without} having to exclude \emph{any} information regarding the ramified primes. This plays an important part in applications to L-functions (especially Artin L-functions). 

The representation are given by the \emph{"surface orders"} coming from completions of the \emph{"surface algebra"} defined over complete discrete valuation rings, Dedekind domains, and local and global fields. In particular, these constructions provide a way of treating the local and global aspects simultaneously as parts of a single unifying object. Let $K/\QQ$ be a finite extension. Using the construction given here, one can treat representations of the Galois group $\mathcal{G}(K/\QQ)$ of a field extension over the global field $K$, the adeles $\mathbf{A}_K$, local fields $K_{\nu}$ corresponding to \emph{finite or infinite} places, the local ring of integers $\mathcal{O}_{\nu}$, global ring of integers $\mathcal{O}_K$, and actions on the finite fields given by the corresponding extensions of $\ZZ/p\ZZ$ (or $\ZZ_p/p\ZZ_p$).

\subsection{Loop Groups, Loop Algebras, and the Geometric Langlands Program}

The ideas developed for the arithmetic Langlands Program generalize immediately to the geometric generalization of the Langlands Program (see for example \cite{Fr1, Fr2}). It gives a way of understanding representations as, sections of vector bundles over smooth projective algebraic curves defined over an arbitrary field. For the complex case, it gives a way of understanding all of this in the framework of representations of loop groups and loop algebras, and pullbacks of them, and a corresponding Kac-Moody Lie algebra given by a central extension of the pullback. The preprint \cite{M} explains how one may interpret this situation in terms of affine Schubert varieties, which is more or less an exposition on work of Lusztig. Lusztig's work on $p$-adic representations theory is simply too extensive to delve into, but the construction mentioned in \cite{M} is clearly applicable as shown in \cite{AS2}. This also means that the theory of standard monomials may be applied, and in future work this method will be applied to study semi-invariants and moduli stacks of representations of surface algebras.

\subsection{Artin L-functions}

From the constructions given within this paper, we may deduce some significant 
results about Artin L-functions and the Langlands correspondence. In particular, the characteristic polynomials defining Artin L-functions are polynomial invariant under base change. So, in a later paper we provide a way of computing the generators of the rings of polynomial and rational invariants. We are able to show that the invariants (and more generally semi-invariants) define a semi-group ring which are coordinate rings of toric varieties. One can then hope to determine whether general Artin L-functions $L(\rho, \chi_i)$ (not necessarily corresponding to automorphic representations), coming from the irreducible characters $\chi_1, ..., \chi_r$ of a Galois group $\cG$ are holomorphic. The realization of the Artin L-functions coming from the semi-simple automorphic representations are shown to be normal. Moreover, it can be shown that any Artin L-function coming from so-called semi-stable "regular modules" are holomorphic. An in depth study requires constructions from Geometric Invariant Theory, constructions given in \cite{LP}, \cite{Do1, Do2, Do3, Do4}, and \cite{SV}, \cite{D}, \cite{DW}, \cite{C}. One can frame this in terms of combinatorial commutative algebra as in \S 7, \S 10 of \cite{MS}, which provides a very approachable and computational way of understanding the problem, and does not require extensive background in so many different areas. Example 10.13 pg. 197 is of particular significance. Results and ideas from \cite{Ci1}, \cite{CN1, CN2, CN3} and \cite{N1, N2, N3}, are then understood more clearly via the representation theory and invariant theory. Applying this along with Artin's Theorem (see \cite{S1} \S 12.5 and \S 17.2) applied to the cyclic subgroups of the Galois group corresponding to the \emph{noncommutative normalization} of the surface order (or more generally, to the automorphism group of a non-Galois extension of arbitrary number fields) we may obtain the following statements:

\textbf{Theorem \ref{Main Theorem}}
\begin{enumerate} 
\item \emph{The rings of polynomial semi-invariants (under an action of special linear groups), and therefore of the polynomial invariants under arbitrary base change are all semi-group rings and are the coordinate rings of affine toric varieties. }

\item \emph{Moreover, the parametrizing varieties of representations, i.e. the "representation varieties," of fixed dimension for a given surface algebra are normal, Cohen-Macaulay, and have rational singularities. }

\item \emph{By results of \cite{C}, \cite{CC}, and \cite{CCKW}, one may deduce that the moduli spaces of the semi-stable "regular modules" are in fact isomorphic to projective lines, and thus all Artin L-functions corresponding to such representations are holomorphic.}\footnote{An irreducible component of a module/representation variety is called \textbf{regular} if the generic module is a direct sum of so-called \emph{band modules}.}
\end{enumerate}

A complete set of generators and relations of semi-invariant polynomials and rational invariants can be given explicitly. Further, the isomorphism classes of representations of a surface algebra are given by orbits under certain connected reductive groups. The parametrizing varieties of these representations can be well understood. In particular, using these results, we can then understand the moduli spaces of representations of surface algebras completely. Further, all of these results can be stated in a characteristic free way over an algebraically closed field, and many can even be stated over $\ZZ$. So, an understanding of automorphic representations, their parametrizing varieties and moduli spaces, the invariant polynomials, and rational invariants can all be understood completely. How the representations of the automorphism group of a finite extension of number fields can be understood in these terms is then very concrete, and from this a rather complete picture of Artin L-functions in terms of polynomial and rational invariants can be obtained. In particular, the L-functions associated to an isomorphism class of representations can be computed explicitely and classified in terms of rings of semi-invariants under base change. For more information on how one may understand this approach see \cite{M}, \cite{C}, \cite{CC}, and \cite{CCKW}. 

From all of this we may obtain the following

\textbf{Corollary} \ref{Main Corollary Artin L-functions}
\emph{The Artin L-functions for any finite field extension $K/F$ of number fields can be realized as 
\[ L(s, \chi) = m_1^{a_1}m_2^{a_2} \cdots m_r^{a_r} \]
a product of monomials corresponding to the local orders of a surface order for any character $\chi$ of the group $G(E/F)$. Moreover, we have that
\[ \chi = a_1\chi_1 + a_2\chi_2 + \cdots + a_r\chi_r \]
where the $\chi_i$ are characters of the cyclic subgroups of $G$ corresponding to the local orders.}

\subsection{Geodesics on Riemann Surfaces and Maryam Mirzakhani's Work}

Using constructions reminiscent of \cite{B}, \cite{BM}, \cite{BS}, \cite{BT}, one may show a certain kind of correspondence between geodesic flows on Teichm\"{u}ller space, and indecomposable modules of surface algebras. The representation theory of surface algebras is essentially known by results of Crawley-Boevey, Ringel, Gabriel, and Gel'fand-Ponomarev. So methods from representation theory of quivers can be applied to study geodesics in moduli spaces of curves as in \cite{EM}, \cite{EMR}, and \cite{MZ}. In particular, using some ideas from \cite{G}, some number theoretic results may be obtained. This is provided in an upcoming paper along with a detailed description of Gabriel's methods in \cite{G} (as this paper is not readily available but contained some important intuition for this application), along with some new examples interpreted in terms of combinatorial commutative algebra, and lattices over orders. As mentioned above, the work of \cite{G}, \cite{Ma}, and \cite{Co} will become important in developing these aspects. 

One should note at this point, a good indication that the "noncommutative geometry" point of view is an important line of investigation is given by Lemma 2.4, pg. 28 and the material following the Lemma in \cite{Ma}. This single Lemma shows one may think of the "shift operator" for the local surface orders defined here, as a version of the shift operator in the Lemma. Moreover, this provides immediate connections to geodesic flows explained in \cite{Ma}, which of course implies connections to Maryam Mirzakhani's work. Using this conceptual framework, and relating the Galois group (or fundamental group) to the action of shift operators, then using the Cantor filtration on modules over surface algebras may yield results on what kinds of geodesics give periodic or dense orbits. This is one important aspect of the theory of "billiards". 

One may then of course apply a straightforward M\"{o}bius transform taking the shifted half plane $\mathbb{H}_{\zeta} = \{z \in \CC: \ Re(z) < 1/2\}$ to the upper half plane $\mathbb{H}_+$. Using the combined work of \cite{Ma} (building on the work \cite{Co}, where Riemann $\zeta$-functions are studied), and \cite{EM}, \cite{EMR}, and \cite{MZ}, and perhaps applications of Selberg $\zeta$-functions and trace formulas (along with the interpetation of path in a surface as modules via generalizing \cite{BM, BS, BT}), one might obtain some very interesting results $\zeta$-functions and special values of L-functions. Further, our treatment here of Artin L-functions in terms of determinantal rational invariants seems to be justified by the expression of the Selberg $\zeta$-function of a modular curve as a Fredholm determinant of the Perron-Frobenius operator as explained in \cite{Ma} pg. 29. 

\subsection{Quantum Computing, Topological Surface Codes, and Quantum Gravity}

Seeing the connections to noncommutative geometry in \cite{Ma}, \cite{Co}, and \cite{L}, it should be of no surprise to the reader that applications to physics might arise. In particular, we should mention some of the more obvious and immediate applications to "topological surface codes" and "quantum computing", and related theories of quantum gravity. Building the first working prototypes of quantum computers has now been accomplished to some degree, so it seems developing a theory that provides a conceptual framework and some intuition to these problems would be important. In \cite{BCCT}, \cite{Br1}, \cite{BCKTV}, \cite{BT}, it is clear that surface algebras and surface orders may be applied to such problems. In particular, one may interpret actions of fundamental groups or Galois groups\footnote{Or generalizations of them to the motivic setting, and closely related groups such as the conjectural Langlands groups. This will require some applications of Section \S \ref{Resolutions} to Beilinson's conjectures via the Serre-Swan Theorem and a mixture of topological and algebraic K-theory.} as the "\emph{Pauli operators}. In particular from pg. 49 of \cite{Br1} we have, 

\emph{"Each stabilizer $X$-check corresponds to a vertex of the tessellation, acting on all edges incident
to the vertex. In a $\{r,s\}$-tessellation the weight of any $X$-check is therefore $s$. Similarly, each
stabilizer $Z$-check corresponds to a face of the tessellation, acting on all incident edges. The weight
of every $Z$-check is therefore $r$. Note that every edge, regardless of $r$ and $s$, is incident to two faces
and two vertices, so that every qubit is acted upon by 4 stabilizer checks (qubit degree is 4)."}

in reference to "hyperbolic surface codes", the vertices and edges in the quote being those of a cellularly embedded graph in the hyperbolic surface as used in the construction of surface algebras and surface orders in this paper. 

Expanding on this correspondence one can investigate applications of Maryam Mirzakhani's work on asymptotic enumeration formulas as a means of studying error detection and correction in quantum surface codes. In particular, the interpretation of a state of the qubits associated to the cellularly embedded graph in the surface as a particular module over the surface algebra (and as paths on the surface), one has again a notion of disctance via the Cantor filtration provided by Gabriel in \cite{G}, as well as a linear order of indecomposable modules which induces a linear order on states of the qubits. This would allow one to understand the possible histories of states as well as possible future states of a current state of the qubits. Moreover, this provides a very combinatorial way of computing such things which are easily implemented in a computer program.\footnote{For example, the computer algebra packages, Magma, GAP, and Sage all have means of handling such computations in terms of quiver representations.} 

Currently, there are some theories of quantum gravity which very closely resemble that of a quantum surface code (see for example \cite{Wi1, Wi2, Wi3}, \cite{RW}). It is not such a stretch to transfer the ideas for surface codes just mentioned to the realm of quantum gravity. In particular, in a Github code repository \cite{AS4} we show how to implement the surface algebras on a quantum computer using \emph{Google Cirq}. The physical implications of the gluing of matrix rings seems to correspond to some spectral compatability constraints on the Hamiltonian of what we call a \emph{Hybrid Qudit Surface Code}. A \emph{Hybrid Qudit Surface Code} is simply a surface code with qudits of mixed dimensions which corresponds to a dessin d'enfant. Each vertex of the dessin corresponds to a qudit with dimension equal to the number of half edges at the vertex. The spectral constraints on the Hamiltonian imply an implementation of the (truncated) Hilbert-Polya conjecture for arbitrary Artin L-functions.

%\subsection{Quantum Surface Codes}

\section{Dessins D'Enfants and Combinatorial Embeddings}\label{Dessins}

In this section we will set our notation and terminology for cellular embeddings of graphs in Riemann surfaces and dessins. We will use some basic combinatorial topology, and although more attention is given to dessins d'enfants, this setup works perfectly well for arbitrary graphs cellularly embedded in Riemann surfaces in general, and very slight modifications can be made to handle the case of noncompact surfaces and surfaces with boundary as well. It is also important to note, that the presentation of the monodromy group of a branched covering
\[ \beta: X \to \Sigma \]
of the Riemann sphere, can be written as a $2$-generator transitive subgroup of $S_n$, where $n$ is the degree of the map $\beta$. The presentation is $G = \la g_0, g_1 \ra$, where $g_0$ gives the permutations describing the branching patterns above $0$, and $g_1$ gives the branching patterns above $1$ when $\beta$ is a Belyi function. Then the permutation $g_{\infty} = (g_0g_1)^{-1}$ describes the branching patterns above $\infty$. In what follows, the setup is for arbitrary cellularly embedded graphs, but the modification to dessins is straightforward and is given by not restricting what we call $"\alpha"$ below to fixed-point free involutions. In other words, we allow arbitrary "constellations" in the language used in this paper. For an introduction to this terminology see \cite{LZ} and \cite{JW}.

\subsection{$3$-Constellations for Arbitrary Graphs on Riemann Surfaces}
Let $S_n$ be the symmetric group on $[n] = \{1,2,3...,n\}$. Permutations will act on the left, so if $\sigma \in S_n$, we will use the cycle notation. 
Let us define a $k$-\textbf{constellation} to be a sequence $C=[g_1, g_2, ..., g_k]$, $g_i \in S_n$, such that:
\begin{enumerate}
\item The group $G = \la g_1, g_2, ..., g_k \ra$ generated by the $g_i$ acts \textit{transitively} on $[n]$. 
\item The product $\prod_{i=1}^k g_i = \id$, is the identity. 
\end{enumerate}
The constellation $C$ has "\textbf{degree} $n$" in this case, and "\textbf{length} $k$". Our main interest will be in $3$-constellations $C = [\sigma, \alpha, \phi]$, which we will describe in detail momentarily. The group $G = \la g_1, g_2, ..., g_k \ra$ will be called the \textbf{cartographic group} or the \textbf{monodromy group} generated by $C$. 
\\

Let $\PP = \PP^1(\CC)$. Let $\Sigma$ be a compact Riemann surface. Suppose $\beta: \Sigma \to \PP$ is a \textbf{Belyi function}. It is often useful to visualize Belyi functions as combinatorial maps on $\Sigma$. This construction plays a big part theoretically since it gives a way of using algebraic and combinatorial methods to study Belyi functions, and it also gives very concrete examples which are useful for developing intuition. Such combinatorial maps uniquely determine $\Sigma$ as a Riemann surface or algebraic curve, and they uniquely determine the Belyi function $\beta$. In fact, the Riemann surface is determined over an algebraic number field if and only if its complex structure is obtained from such a combinatorial map. In particular, we have

The following important Theorem can be found in \cite{JW}. 
\begin{theorem}\label{Belyi's Thm}
(\textbf{Belyi's Theorem}): Let $X$ be a compact Riemann surface, i.e. a smooth projective algebraic curve in $\PP_{\CC}^N$ for some $N$. Then $X$ can be defined over $\overline{\QQ}$ if and only if there exists a nonconstant meromorphic function $\beta: X \to \PP_{\CC}^1$ ramified over at most three points. 
\end{theorem}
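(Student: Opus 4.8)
The plan is to prove the two implications separately, following Belyi's original argument for the nontrivial direction and a descent argument for the easy one.

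For the implication "a Belyi map $\beta$ exists $\Rightarrow X$ is defined over $\overline{\QQ}$", I would argue by descent of étale covers. Put $U = \PP^1_{\overline{\QQ}} \smallsetminus \{0,1,\infty\}$. A Belyi map $\beta : X \to \PP^1_{\CC}$ exhibits $X \smallsetminus \beta^{-1}(\{0,1,\infty\})$ as a finite connected étale cover of $U_{\CC}$. Since $U$ is a smooth affine curve over the algebraically closed field $\overline{\QQ}$, one has $\pi_1^{\mathrm{et}}(U_{\CC}) \cong \pi_1^{\mathrm{et}}(U_{\overline{\QQ}})$ (both identified with the profinite completion of the free group on two generators), so the open finite-index subgroup classifying our cover already defines a finite étale cover $V_0 \to U_{\overline{\QQ}}$ with $V_0 \times_{\overline{\QQ}} \CC$ isomorphic to $X \smallsetminus \beta^{-1}(\{0,1,\infty\})$. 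Passing to the smooth projective completion of the curve $V_0$ — which commutes with the base change to $\CC$ — produces a model of $X$ over $\overline{\QQ}$. (Alternatively one can invoke Weil's criterion for the field of moduli, using that the rigidified pair $(X,\beta)$ is determined by finite, hence $\mathrm{Gal}(\CC/\overline{\QQ})$-stable, combinatorial monodromy data, so that the field of moduli equals the field of definition and is a number field.)

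For the converse, assume $X$ is defined over $\overline{\QQ}$; choose a model over $\overline{\QQ}$ and any nonconstant rational function $f$ on it, giving $f : X \to \PP^1$ defined over $\overline{\QQ}$, and let $S \subset \PP^1(\overline{\QQ})$ be its finite set of critical values, enlarged to contain $\infty$. Stage one makes every branch point rational: if some $\alpha \in S$ has degree $d = [\QQ(\alpha):\QQ] > 1$, let $m \in \QQ[t]$ be its minimal polynomial and replace $f$ by $m \circ f$; the new critical values lie in $m(S) \cup m(\{t : m'(t) = 0\})$, the whole Galois orbit of $\alpha$ is collapsed onto $0$, and the critical values of $m$ have degree $< d$, so an induction on $\max_{\alpha \in S}[\QQ(\alpha):\QQ]$ (and on the number of Galois orbits attaining that maximum) terminates with $S \subseteq \PP^1(\QQ)$. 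Stage two reduces $|S|$ to three: after a Möbius transformation over $\QQ$ we may assume $\{0,1,\infty\} \subseteq S$, and whenever $\lambda \in S \smallsetminus \{0,1,\infty\}$, conjugating by a suitable element of the anharmonic group we may further assume $0 < \lambda < 1$, write $\lambda = m/(m+n)$ with $m,n \in \ZZ_{>0}$, and compose with the Belyi polynomial
\[ \phi_{m,n}(t) = \frac{(m+n)^{m+n}}{m^m\, n^n}\, t^m (1-t)^n, \]
whose only critical values are $0,1,\infty$ and which sends $\{0,1,\lambda,\infty\}$ into $\{0,1,\infty\}$ while fixing $\QQ \cup \{\infty\}$ setwise; thus $|S \smallsetminus \{0,1,\infty\}|$ strictly decreases and $S$ stays in $\PP^1(\QQ)$, so after finitely many steps $S \subseteq \{0,1,\infty\}$ and $\beta$, the resulting composite, is the desired Belyi function.

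The main obstacle is the bookkeeping in Stage one: one must verify that post-composition with $m$ does not increase the maximal degree occurring among critical values and genuinely reduces the relevant count — this uses that $m$ maps an algebraic number of degree $\le d$ to one of degree $\le d$, and that each Galois orbit of degree exactly $d$ is sent to the single rational point $0$. Stage two is then a finite, purely formal iteration, and the easy direction reduces to the standard descent facts cited above.
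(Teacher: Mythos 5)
The paper does not actually prove this statement: Theorem \ref{Belyi's Thm} is Belyi's theorem, which the text simply quotes and attributes to \cite{JW}, so there is no internal argument to compare yours against. Your proposal is the standard and correct proof. For the direction ``Belyi map $\Rightarrow$ defined over $\overline{\QQ}$'' you use the invariance of the \'etale fundamental group of $\PP^1\smallsetminus\{0,1,\infty\}$ under extension of algebraically closed fields of characteristic zero to descend the punctured cover to $\overline{\QQ}$, then take the smooth projective completion (whose formation commutes with the base change); this, or the equivalent appeal to Weil's descent criterion, is the accepted rigorous treatment of what Grothendieck called the ``obvious'' direction. For the converse your two-stage argument is exactly Belyi's: the key bookkeeping point in Stage one is handled correctly, since post-composition with the minimal polynomial $m$ of a degree-$d$ branch point sends every critical value into a field of degree at most $d$, introduces new critical values only of degree at most $d-1$ (they lie in $\QQ(\beta)$ for $\beta$ a root of $m'$), and annihilates the entire Galois orbit of $\alpha$, so the lexicographic induction on $\bigl(\max_{\alpha\in S}[\QQ(\alpha):\QQ],\ \#\{\text{orbits attaining the max}\}\bigr)$ terminates. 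Stage two with $\phi_{m,n}(t)=\frac{(m+n)^{m+n}}{m^m n^n}t^m(1-t)^n$ is likewise correct: its only critical values are $0,1,\infty$, it fixes $\QQ\cup\{\infty\}$ setwise, and it sends $\lambda=m/(m+n)$ to $1$, so the count of rational branch points outside $\{0,1,\infty\}$ strictly drops. The only thing I would add for completeness is a one-line verification that some element of the anharmonic group moves any rational $\lambda\notin\{0,1,\infty\}$ into the interval $(0,1)$, but this is routine.
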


\begin{defn}
Any meromorphic $f$ fitting the criteria of Theorem \ref{Belyi's Thm} is called a Belyi function. For such $f$, place a black vertex $\bullet$ at $1 \in \PP_{\CC}^1$, and a white vertex $\circ$ at $0 \in \PP_{\CC}^1$, and an edge on the real interval $[0,1]$. This gives a bipartite graph $\Gamma_{\PP}$ embedded in the sphere $\PP_{\CC}^1$. Define a \textbf{dessin d'enfant} to be $f^{-1}(\Gamma_{\PP}):=\Gamma \subset X$. It will have the structure of a bipartite graph, cellularly embedded $\Gamma \hookrightarrow X$, in the compact Riemann surface$X$. One may equivalently define a dessin to be an \emph{algebraic bipartite map} (see \cite{JW}), which is given by an arbitrary $3$-constellation $C = [\sigma, \alpha, \phi]$. 
\end{defn}

There is a correspondence between $3$-constellations $C = [\sigma, \alpha, \phi]$ such that $\alpha$ is a fixed point free involution, and graphs which are cellularly embedded in a compact Riemann surface without boundary. In particular, such constellations give a CW-complex structure on the surface $X$. Dessins are then the special cases when $C$ corresponds to a bipartite graph embedded in $X$.

\subsection{The Clockwise Cyclic Vertex Order Construction}
There are many equivalent ways of defining a graph on a Riemann surface. One of the simplest and probably the most combinatorial ways is by constellations. There are at least two ways of viewing this construction. We present two here, which are in some sense dual to one another. Intuitively, we follow the recipe:

\begin{enumerate}
\item First choose some positive integer $r \in \NN$ to be the number of \emph{vertices} 
of the graph, say $\Gamma_0 = \{x_1, x_2, ..., x_r\}$. 
\item Then, to each vertex $x_i$, we choose some number $k_i$, of \emph{"half edges"} to attach to it, with the rule that once we have chosen $k_i$ for each $x_i$, the sum $\sum_{i=1}^r 
k_i = 2n$, must be some positive even integer. 
\item We then choose a \emph{clockwise cyclic ordering} of the \textit{"half-edges"} around 
each vertex $x_i$, i.e. some cyclic permutation $\sigma_i$ of $[k_i] = \{1, 2, ..., k_i\}$ for each $x_i$. The 
cyclic permutations $\sigma_i$ must all be disjoint from one another, and together they form a permutation of $[2n]$.
\item Once such a \emph{cyclic ordering} is chosen, we then define a \emph{gluing} of all of the \emph{"half edges"}. In particular, we choose some fixed-point free involution on the collection of all half edges, which is a permutation in $S_{2n}$ given by $|\Gamma_1|$ many $2$-cycles. This defines $\alpha$. 
\end{enumerate}

We then have the usual \textbf{Euler formula}, 
\[ |\phi|-|\alpha|+|\sigma| = F-E+V = \chi(\Sigma).\]
Said a slightly different way, we define a pair $[\sigma, \alpha]$, where $\sigma, \alpha \in S_{2n}$. The permutation
\[ \sigma = \sigma_1\sigma_2 \cdots \sigma_r \] is a collection of cyclic permutations, one $\sigma_i$ for each 
vertex $x_i$ of our graph $\Gamma = (\Gamma_0, \Gamma_1)$. So, perhaps in better notation, each 
\[ \sigma_x = (e(x)_1, e(x)_2, ...., e(x)_{k(x)}),\]
can be thought of as giving a cyclic ordering of the \textbf{half edges}.

Denote the half edges,
\[ \Gamma_1(x) = \{e(x)_1, e(x)_2, ..., e(x)_{k(x)}\},\] attached to each vertex $x \in \Gamma_0$ in our graph. The cycles $\sigma_x$ are all necessarily disjoint. We define how to glue pairs of 
\emph{half edges}, in order to get a connected graph $\Gamma$, via the permutation $\alpha$. 

The permutation $\alpha$ is of the form 
\begin{align*}
\alpha &= \alpha^1 \alpha^2 \cdots \alpha^t \\
		   &= (\alpha_1, \alpha_2)(\alpha_3, \alpha_4) \cdots (\alpha_{2n-1}, \alpha_{2n}) \\
		   &= \prod_{e \in \Gamma_1} \alpha(e)
\end{align*}
and each $(\alpha_i, \alpha_{i+1})$ tells us to glue the two corresponding \emph{half-edges}. Here we can also view \[ \alpha : \Gamma_1 \to \amalg_{x \in \Gamma_0} \Gamma_1(x) \] as a map from the edges $\Gamma_1$, to the \textit{half-edges} $\amalg_{x \in \Gamma_0} \Gamma_1(x)$. So $\alpha(e) = (e(x)_p, e(y)_q)$.

\begin{table} % Add the following just after the closing bracket on this line to specify a position for the table on the page: [h], [t], [b] or [p] - these mean: here, top, bottom and on a separate page, respectively
\centering % Centers the table on the page, comment out to left-justify
\fbox{\begin{tabular}{l c c c c c} % The final bracket specifies the number of columns in the table along with left and right borders which are specified using vertical bars (|); each column can be left, right or center-justified using l, r or c. To specify a precise width, use p{width}, e.g. p{5cm}
%\toprule \\ % Top horizontal line
%& \multicolumn{5}{c}{Constellations as CW-Complexes}  \\ % Amalgamating several columns into one cell is done using the \multicolumn command as seen on this line
%\midrule % Horizontal line spanning less than the full width of the table - you can add (r) or (l) just before the opening curly bracket to shorten the rule on the left or right side
Notation & Meaning  \\ % Column names row
 \\
%\midrule % In-table horizontal line
$\Gamma \hookrightarrow \Sigma$ & cellularly embedded graph \\ % Content row 1
$\Sigma$ & a Riemann surface, generally closed \\ % Content row 2
$\Gamma_0$ & the vertex set of a graph $\Gamma$ \\ % Content row 3
$\Gamma_1$ & the edge set of a graph $\Gamma$ \\ % Content row 4
$\Gamma_1(x)$ & the \textit{half-edges} around a vertex \\ % Content row 5
$e(x)_i$ & a \textit{half-edge} in $\Gamma_1(x)$ attached to $x \in \Gamma_0$ \\ % Content row 6
$\partial e = \{\partial_{\bullet} e, \partial^{\bullet} e\}$ & the vertices adjacent to $e \in \Gamma_1$ \\ % Content row 7
$\alpha^k = (\alpha_i, \alpha_{i+1})$ & a $2$-cycle of $\alpha$ \\ % Content row 8
$(\alpha_i, \alpha_{i+1}) = (e(x)_p, e(y)_q)$ & glued \textit{half-edges} $e(x)_p$ and $e(y)_q$ \\ % Content row 9
$\alpha(e) = (e(x)_p, e(y)_q)$ & $\alpha$ as a map $\Gamma_1 \to \amalg_{x \in \Gamma_0}\Gamma_1(x)$ \\  
& \\
%Content row 10
%\midrule % In-table horizontal line
%\midrule % In-table horizontal line
%\bottomrule % Bottom horizontal line
\end{tabular}}\\
%\caption{Note, $\alpha$ indicates a gluing of two \textit{"half-edges"}, and so represents a single edge in $\Gamma_1$. It can also be thought of as a component of the map \[ \alpha: \Gamma_1 \to \amalg_{x \in \Gamma_0}\Gamma_1(x),\] identifying the edge $e$, with its two half-edges $\alpha(e) = (e(x)_p, e(y)_q)$.There are now a mildly disturbing amount of notations for describing a $2$-cycle \[ \alpha^k = (\alpha_i, \alpha_{i+1}) = (e(x)_p, e(y)_q). \]
%So, it may be prudent to have more of the notation gathered into one place to refer back to later in the table above.} % Table caption, can be commented out if no caption is required
\label{table1} % A label for referencing this table elsewhere, references are used in text as \ref{label}
\end{table}

\begin{ex}
Let us illustrate this by a simple example. As a permutation on the set of all \emph{half edges},
\[ \Gamma_1(x) \amalg \Gamma_1(y)  = \{e(x)_1, e(x)_2, e(x)_3, e(y)_1, e(y)_2, e(y)_3\},\] around two vertices $\Gamma_0 = \{x,y\}$ 
we may identify $\sigma, \alpha \in \mathbf{Perm}(\Gamma_1(x) \amalg \Gamma_1(y))$ with permutations in $S_6$. Namely, let us define the identification
\begin{align*}
\sigma &= \sigma_x \sigma_y \\
		   &= (e(x)_1, e(x)_2, e(x)_3)\cdot (e(y)_1, e(y)_2, e(y)_3) \leftrightarrow (1,2,3)(4,5,6) \in S_6
\end{align*}
and let 
\[ \alpha = (e(x)_1, e(y)_1)(e(x)_2, e(y)_2)(e(x)_3, e(y)_3) \leftrightarrow (1,4)(2,5)(3,6) \in S_6.
\] 
Then under this identification, the graph with
\begin{itemize}
\item two vertices $\Gamma_0 = \{x,y\} \leftrightarrow \{\sigma_x, \sigma_y\} = \{(1,2,3), (4,5,6)\}$, 
\item and six half edges \[ \Gamma_1(x) \amalg \Gamma_1(y)  \leftrightarrow \{1,2,3\} \amalg \{4,5,6\}.\] 
\end{itemize}
may be represented by the following picture to help visualize this, \\
\
\\
\begin{tikzcd}[column sep=.1in,row sep=.3in]
 &  & {\sigma = (1,2,3)(4,5,6)} &  & {\alpha = (1,4)(2,5)(3,6)} &  &  \\
 &  &  &  &  &  &  \\
 &  &  &  &  &  &  \\
 & \ \arrow[d, "1"', no head, bend right] \arrow[rrr, "{(1,4)}", no head, dashed, bend left=49] &  &  & \ &  & \ \\
 & {(1,2,3)} \arrow[rd, "3", no head, bend left] \arrow[ld, "2", no head, bend right] &  & {(2,5)} \arrow[rrru, no head, dashed, bend left=49] &  & {(4,5,6)} \arrow[lu, "6", no head, bend left] \arrow[d, "4", no head, bend left] \arrow[ru, "5", no head, bend right] &  \\
\ \arrow[rrru, no head, dashed, bend right=49] &  & \ \arrow[rrr, "{(3,6)}", no head, dashed, bend right=49] &  &  & \ & 
\end{tikzcd}
\\
\
\\
\end{ex}

\subsection{The Polygon Construction}
It is important at this point to make a few comments. Not every graph is planar, i.e. there may be no embedding on the sphere $S^2 = \PP^1$ without edge crossings. To see a second way this plays out with constellations, we now turn to the dual construction on faces. In the last section, the permutations $\phi = \alpha \sigma^{-1}$, defining the constellation $C = [\sigma, \alpha, \phi]$ were quite neglected in the construction. This is partially because they are not strictly needed since $\sigma \alpha \phi = \id \implies \alpha \sigma^{-1} = \phi$. 

The previous construction focused on \textit{"cyclic orderings"} of the \textit{half edges} around each vertex, and gluings of those half edges to obtain a connected graph $\Gamma$. There is another way of constructing cellular embeddings which comes from polygon presentations of surfaces. This is likely more familiar to the reader, and therefore more intuitive. The question might be asked, "why not just use this more typical example." One answer would be, the former construction is actually quite standard in the literature on combinatorial maps. A better answer however is, the combinatorics and the notation involved in the previous (clockwise) \textit{"cyclic vertex ordering"} construction is much more convenient for later constructions involving \textit{medial quivers}, \textit{surface algebras}, and the representation theory that follows. It will be useful, and sometimes more intuitive to have this second construction though. Let us begin with the following recipe:

\begin{enumerate}
\item Write $\phi$ as a product of disjoint cycles $\phi_1 \phi_2 \cdots \phi_p$
\item To each cycle $\phi_i$ of length $m_i$ we associate a $m_i$-gon, oriented \textit{counterclockwise}. 
\item Then we glue the sides of each polygon according to $\alpha$ so that the sides which are glued have opposite orientation. 
\item From this gluing we obtain a cyclic order of edges $\sigma = \phi^{-1}\alpha$ around each vertex. Note: $\alpha = \alpha^{-1}$ since it is required to be an involution. Also, the vertices with cyclic orderings are the corners of the polygons after gluing.
\end{enumerate}

\begin{ex}
Let us once again illustrate by example. Take $C = [\sigma, \alpha, \phi]$ from the previous construction where 
\[ \sigma = \sigma_x \sigma_y = (1,2,3)(4,5,6), \quad \alpha = \alpha^1\alpha^2\alpha^3 = (1,4)(2,5)(3,6).\]
This implies $\phi = (162435)$. This is represented by a counterclockwise oriented hexagon:\\
\
\\
\[ \begin{tikzpicture}
  [thick
  ,decoration =
    {markings
    ,mark=at position 0.5 with {\arrow{stealth'}}
    }
  ]
  \newdimen\R
  \R=2.7cm
  \node {$\phi$};
  \draw[-stealth'] (-150:{0.7*\R}) arc (-150:150:{0.7*\R});
  \foreach \x/\l in
    { 60/f,
     120/a,
     180/b,
     240/c,
     300/d,
     360/e
    }
    \draw[postaction={decorate}] ({\x-60}:\R) -- node[auto,swap]{\l} (\x:\R);
  \foreach \x/\l/\p in
    { 60/{x}/above,
     120/{y}/above,
     180/{x}/left,
     240/{y}/below,
     300/{x}/below,
     360/{y}/right
    }
    \node[inner sep=2pt,circle,draw,fill,label={\p:\l}] at (\x:\R) {};
\end{tikzpicture}\]
\\
\
\\

The \textit{"word"} associated to the polygon given by $\phi$ in most standard texts containing material on polygon presentations of surfaces is 
\[ abcdef \leftrightarrow (162435). \]
The gluing $\alpha = (1,4)(2,5)(3,6)$, then says we must glue the faces:
\[ a \leftrightarrow d, \quad b \leftrightarrow e, \quad c \leftrightarrow f.\]
Care must be taken to glue sides so that their orientations "appose" one another so that the surface obtained is \textit{oriented} according to the \textit{counterclockwise} oriented face. The cellularly embedded graph that we obtain lives on a torus $\TT^2$. We can determine this purely via the combinatorics by computing
\[ \chi(\Sigma) = |\phi| - |\alpha| + |\sigma| = |\phi|-|\Gamma_1|+|\Gamma_0|= 1-3+2 = 0,\]
and since $\chi(\Sigma) = 2g(\Sigma)-2$ we have that the genus of $\Sigma$ is $g = 1$. \\
\
\\

%\noindent
\begin{center}
\fbox{\includegraphics[
    page=1,
    width=280pt,
    height=280pt,
    keepaspectratio
]{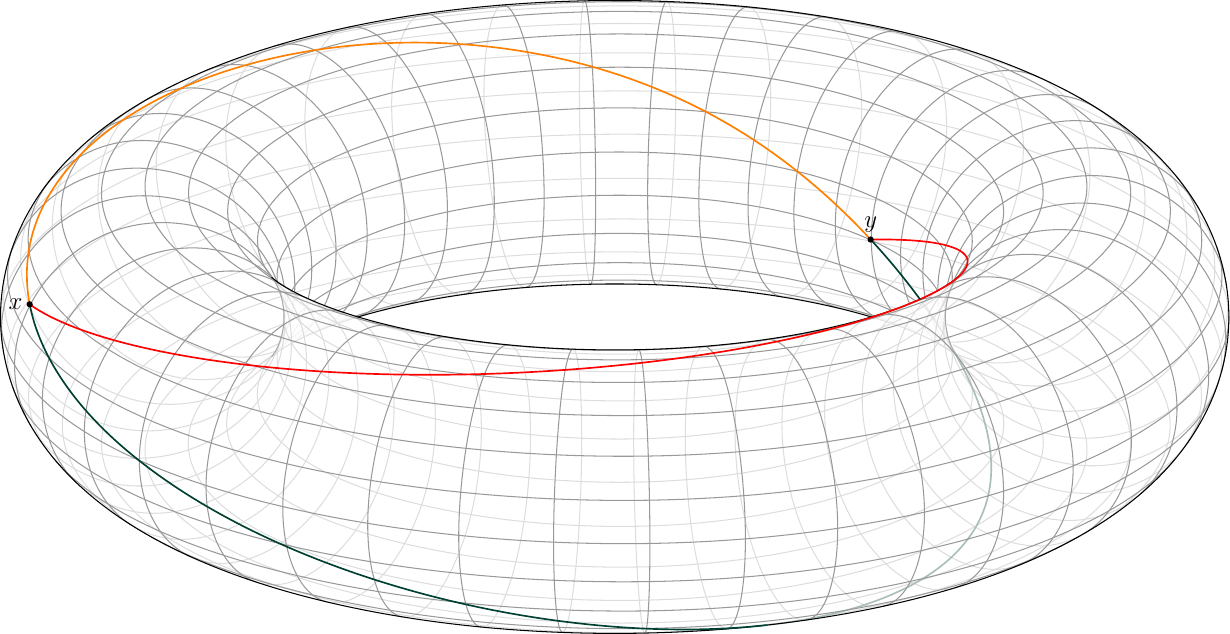}}
\end{center}
\end{ex}

\medskip

\subsection{Hypermaps and Bipartite Maps}

It is important to not here that if one wishes to construct bipartite maps so as the restrict to the case of dessins d'enfants, one may proceed in two ways. We may modify $C = [\sigma, \alpha, \phi]$ by adding $2$-cycles to the permutation $\sigma$ which will corespond to subdividing the edges given by $\sigma$, and then redefine $\alpha$ accordingly. The second way is to allow arbitrary $3$-constellations and not just those with $\alpha$ a fixed-point free involution. This can be interpreted as $\sigma$ being the rotation of edges around black vertices, $\alpha$ being the rotation of edges around white vertices, and $(\sigma \alpha) = \phi$ the rotation around the faces. One may then relate this to the theory of triangle groups and Fuchsian groups. This is not used here, but will become important in later work and should be noted. For these constructions the reader is referred to \cite{JW} \S 2.1.2 and \S 3, and to \cite{LZ} \S 1.5. One might also look at \cite{S4} \S 5.

\section{Medial Quivers of Combinatorial Maps and Constellations}\label{Medial Quivers}
There is a very natural way of associating a cellularly embedded graph to a quiver, and a quiver to a cellularly embedded graph. In particular, we can define a bijection of such objects. 

\begin{defn} The way we do this is by choosing the quiver to be the directed medial graph of the cellularly embedded graph. In particular, for each face $\phi_j$ of $C=[\sigma, \alpha, \phi]$, we place a vertex on the interior of each edge of the boundary of $\phi_j$. We then connect the vertices counter-clockwise with arrows. This forms the \textbf{medial quiver} of the constellation, or equivalently of the cellularly embedded graph.
\end{defn}

\begin{ex}
\noindent
\includegraphics[
    page=1,
    width=200pt,
    height=200pt,
    keepaspectratio
]{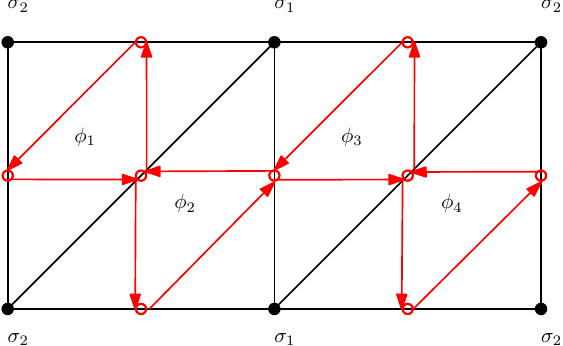}
\\
\
\\
\
\\
As an example, we have the medial quiver for a triangulation of a torus given by the constellation
\[ \phi = (1,2,3)(4,5,6)(7,8,9)(10,11,12), \quad \alpha = (1,5)(2,12)(3,4)(6,7)(8,10)(9,11 ). \]
It has face cycles given by $\phi = \phi_1 \phi_2 \phi_3 \phi_4$, and the gluing $\alpha$ identifies the top edges and bottom edges, as well as the left and right side edges, in the typical way. 
\end{ex}

\section{Surface Algebras}\label{Surface Algebras}

We now introduce the \textit{surface algebras}, which along with their $\fm$-adic completions will be the main objects of study in what follows. 

Let $Q = (Q_0, Q_1, h, t)$ be a quiver, with the set of vertices $Q_0$, and the set of arrows $Q_1$. There are two maps, 
\[ t, h: Q_1 \to Q_0 \]
taking an arrow $a \in Q_1$ to its \textbf{head} $ha$, and \textbf{tail} $ta$. This is a refinement of the incidence map for an undirected graph, and we define 
\[ \partial a = \{\partial_{\bullet}a, \partial^{\bullet}a\}:= \{ta, ha\}.\] 
In this case the order is not arbitrary as it would be for undirected graphs. The \textbf{path algebra} of a quiver $Q$, denoted $kQ$, over a field $k$, is the $k$-vector space spanned by all oriented paths in $Q$. It is an associative algebra, and is finite dimensional as a $k$-vector space if and only if $Q$ has no oriented cycles. There are trivial paths $i \in Q_0$, given by the vertices, and multiplication in the path algebra is defined by concatenation of paths, when such a concatenation exists. Otherwise the multiplication is defined to be zero. More precisely, if $p$ and $q$ are directed paths in $Q$, and $hp=tq$, then $qp$ is defined as the concatenation of $p$ and $q$. Note, we will read paths from \emph{right to left}. Let $A=kQ$. The \textbf{vertex span} $A_0=k^{Q_0}$, and the \textbf{arrow span} $A_1=k^{Q_1}$ are finite dimensional subspaces. $A_0$ is a finite dimensional commutative $k$-algebra, and $A_1$ is an $A_0$-bimodule. The path algebra then has a grading by path length, 
\[ A = A_0 \la A_1 \ra = \bigoplus_{d=0}^{\infty} A^{\otimes d}.\]
The path algebra $A$ has primitive orthogonal idempotents $\{e_i\}_{i \in Q_0}$. Let $A_{i,j} = e_jAe_i$ be the $k$-linear span of paths in $Q$, from vertex $i$ to $j$. Let $\fm = \prod_{d=1}^{\infty} A^{\otimes d}$ denote the \emph{arrow ideal} of $Q$, generated by the arrows $Q_1$. We will define the \textbf{complete path algebra} to be
\[ \cA = A_0 \la \la A_1 \ra \ra = \prod_{d=0}^{\infty} A^{\otimes d} .\]
We put the $\fm$-adic topology on $\cA$, with neighborhoods of $0$ generated by $\fm^n$. The elements of $\cA$ are all formal linear combinations of paths, including infinite linear combinations. If $\phi: \cA \to \cA$ is an automorphism fixing $\cA_0$ then $\phi$ is continuous in the $\fm$-adic topology, and $\fm$ is invariant under such algebra automorphisms.

\begin{defn}
An \textbf{ideal} in the path algebra $A$ will be a two sided ideal generated by linear combinations of paths which share a common starting vertex and terminal vertex in the quiver. The \textbf{quotient path algebra} of a quiver with relations will be the quotient by this ideal. 
\end{defn}

Next let us turn to the specific quivers with relations of interest for our current purposes. 

\begin{defn}
We will define a \textbf{free surface algebra} to be the path algebra of the medial quiver of any combinatorial map $C=[\sigma, \alpha, \phi]$.
\end{defn}

\begin{defn}\label{gentle surface algebra}
	Let $Q = (Q_0, Q_1)$ be a finite connected quiver. Then we say the bound path algebra $\Lambda = kQ/I$ is a \textbf{surface algebra} if the following properties hold:
	
	\begin{enumerate}
		\item For every vertex $x \in Q_0$ there are exactly two arrows $a, a' \in Q_1$ with $ha = x = ha'$, and exactly two arrows $b, b \in Q_1$ such that $tb = x = tb'$.
		\item For any arrow $a \in Q_1$ there is exactly one arrow $b \in Q_1$ such that $ba \in I$, and there is exactly one arrow $c \in Q_1$ such that $ac \in I$.
		\item For any arrow $a \in Q_1$ there is exactly one arrow $b' \in Q_1$ such that $b'a \notin I$, and there is exactly one arrow $c' \in Q_1$ such that $ac' \notin I$.
		\item The ideal $I$ is generated by paths of length $2$. 
	\end{enumerate}
\end{defn}

These will be called \textbf{gentle relations}. Such quivers with relations are a very popular class of path algebras studied in the representation theory of associative algebras. It is important to note that given a quiver such that every vertex has in-degree and out-degree exactly $2$, we may choose several ideals $I$ such that $kQ/I$ is a (gentle) surface algebra. Such algebras are always infinite dimensional, but they retain many of the nice combinatorial and representation theoretic properties of finite dimensional gentle algebras. See for example the preprint \cite{CB4} and references therein for more background on the representation theory of infinite dimensional string algebras (a class of algebras which includes gentle surface algebras). The next Theorem indicates how this project started. While attempting to count certain indecomposable modules over a class of algebras, it was noticed that associating a graph to the algebras in an essentially unique way, one could apply Polya theory with some minor success. Later, it was discovered that the graph constructed for these purposes was simply a dessin d'enfant. In particular, 

\begin{theorem}
There is a unique dessin $C=[\sigma, \alpha, \phi]$ associated to each surface algebra $A$ such that the following properties hold:
\begin{enumerate}
\item The quiver $Q$ of the surface algebra is the directed medial graph of $C=[\sigma, \alpha, \phi]$.
\item The cycles of the permutation $\phi$ are in one-to-one correspondence with cycles of (gentle) zero relations $I$, as described by Definition \ref{gentle surface algebra}.
\item One can define an action of $\phi = \phi_1 \phi_2 \cdots \phi_s$, on the cycles of relations $I=\la I(\phi_1), I(\phi_2), ..., I(\phi_s) \ra$ as a cyclic permutation on the arrows of each cycle of relations $I(\phi_j)$. 
\item One can partition the arrows of the quiver $Q$ with respect to $\sigma$ such that the nonzero simple cycles of the quiver $Q$ are in one-to-one correspondence with cycles $\sigma_i$ of $\sigma$. 
\item One may define an action of $\sigma$ on the partition $Q_1 = \{ c(\sigma_1), c(\sigma_2), ..., c(\sigma_r) \ra \}$ and thus on the arrow ideal $\fm$ of $A$, and on the $k$-vector space $k^{Q_0}$ giving the arrow span of $A$. The action is again by cyclically permuting the arrows in each non-zero cycle $c_i = c(\sigma_i)$. 
\item The permutation $\alpha$ determines how one may glue nonzero cycles, or equivalently cycles of relations in order to obtain a (gentle) surface algebra. 
\item Let $Q_1(\sigma) = \{c(\sigma_1), c(\sigma_2), ..., c(\sigma_r)\}$ be the partition of $Q_1$ with respect to $\sigma$, and let $Q_1(\phi) = \{I(\phi_1), I(\phi_2), ..., I(\phi_s)\}$ be the partition with respect to $\phi$. Let
\[ k^{Q_1(\sigma)} = V_1 \oplus V_2 \oplus \cdots \oplus V_r \]
where $V_i = k^{c(\sigma_i)}$, and let 
\[ k^{Q_1(\phi)} = W_1 \oplus W_2 \oplus \cdots \oplus W_s \]
where $W_j = k^{I(\phi_j)}$. Then and action of the absolute Galois group $\cG(\overline{Q}/Q)$ on the dessin $C=[\sigma, \alpha, \phi]$ induces an action on the surface algebra and its quiver. In particular, it induces an automorphism of the vector spaces $k^{Q_1(\sigma)}$ and $k^{Q_1(\phi)}$, such that $\dim_kV_i = \dim_k g \cdot V_i$, and $\dim_kW_j = \dim_k g \cdot W_j$ for any $g \in \cG(\overline{Q}/Q)$. 
\end{enumerate}
\end{theorem}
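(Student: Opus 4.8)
The plan is to establish the claimed bijection constructively, by producing explicit maps in both directions between surface algebras and dessins, and then verifying that all seven properties are simply unwound definitions of the medial quiver construction together with the combinatorics of gentle relations. First I would fix a surface algebra $\Lambda = kQ/I$ satisfying Definition \ref{gentle surface algebra} and show that the data $(Q,I)$ canonically determines a $3$-constellation $C = [\sigma,\alpha,\phi]$ of some degree $n$, where $n = |Q_1|$. The key observation is that the in-degree/out-degree $2$ condition (property (1)) means every vertex of $Q$ has exactly two arrows in and two out, and the gentle condition (properties (2)--(4)) partitions all length-$2$ paths at each vertex into exactly one zero path and one nonzero path on each side. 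This gives two permutations of the arrow set $Q_1$: one, call it $\sigma$, sending an arrow $a$ to the unique arrow $b'$ with $b'a \notin I$ (the ``nonzero continuation''), and another, $\phi^{-1}$ (or $\phi$, depending on orientation conventions), sending $a$ to the unique $b$ with $ba \in I$ (the ``zero continuation''). One then checks that the cycles of $\sigma$ trace out the nonzero simple cycles of $Q$ (giving property (4)), that the cycles of $\phi$ trace out the cyclic zero-relation paths $I(\phi_j)$ (giving property (2)), and that $\phi = \alpha\sigma^{-1}$ forces a fixed-point-free involution $\alpha$ (so $C$ really is a dessin). Properties (3) and (5) are then just restatements that $\sigma$ and $\phi$ act by cyclic permutation on these partition blocks, and (6) records that $\alpha$ is precisely the gluing permutation recovering $Q$ from the polygons of $\phi$ via the medial-quiver construction of Section \ref{Medial Quivers}.

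Second, I would verify the converse: starting from a dessin $C = [\sigma,\alpha,\phi]$, the directed medial graph of $C$ (Definition in Section \ref{Medial Quivers}) is a quiver $Q$ in which every vertex has in-degree and out-degree $2$, because each medial-quiver vertex sits on an edge shared by exactly two faces and the counterclockwise arrows around the two incident faces contribute one in-arrow and one out-arrow each. The ideal $I$ generated by the length-$2$ paths that turn from one face-boundary walk into the ``other side'' at each medial vertex is then gentle by construction, and one checks $kQ/I$ satisfies Definition \ref{gentle surface algebra}. Finally one confirms the two constructions are mutually inverse, which is essentially bookkeeping with the identifications in the table and examples of Section \ref{Dessins}; property (1) of the theorem (the quiver is the medial graph of $C$) is then immediate by definition of the maps.

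Third, for property (7) I would argue as follows. The absolute Galois group $\cG(\overline{\QQ}/\QQ)$ acts on dessins defined over $\overline{\QQ}$ (via its action on Belyi pairs, equivalently on the associated $3$-constellations up to conjugacy), and this action preserves the passport of the dessin: in particular it permutes the cycles of $\sigma$ among themselves preserving their lengths, and likewise for the cycles of $\phi$ and $\alpha$ — these are among the classical invariants of a Galois orbit. Under the bijection just established, a cycle $\sigma_i$ corresponds to the block $c(\sigma_i) \subseteq Q_1$ of arrows forming a nonzero simple cycle, with $\dim_k V_i = \dim_k k^{c(\sigma_i)} = |c(\sigma_i)| = $ the length of $\sigma_i$; similarly $\dim_k W_j$ equals the length of $\phi_j$. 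Since $g \in \cG(\overline{\QQ}/\QQ)$ sends $\sigma_i$ to a cycle of the same length (merely relabelling $[n]$), it induces a linear isomorphism $k^{Q_1(\sigma)} \to k^{(g\cdot Q_1)(\sigma)}$ permuting the summands $V_i$ and preserving each $\dim_k V_i$, and analogously for the $W_j$. Packaging these permutations of summands as block automorphisms of $k^{Q_1(\sigma)}$ and $k^{Q_1(\phi)}$ gives exactly the asserted automorphisms with $\dim_k V_i = \dim_k g\cdot V_i$ and $\dim_k W_j = \dim_k g \cdot W_j$.

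The main obstacle I anticipate is not any single computation but making the orientation and convention choices consistent throughout: the medial quiver is built from \emph{counterclockwise} arrows around faces (the $\phi$-cycles), while the nonzero simple cycles are governed by $\sigma$, and one must pin down once and for all which of the two length-$2$ continuations at a vertex lies in $I$ so that $\phi = \alpha\sigma^{-1}$ (equivalently $\sigma\alpha\phi = \id$) holds on the nose rather than up to inversion or conjugacy. A secondary subtlety is the precise sense in which $\cG(\overline{\QQ}/\QQ)$ ``acts on the dessin'': strictly it acts on isomorphism classes (Galois orbits), so property (7) should be read as an action up to the natural isomorphisms of surface algebras induced by relabelling $[n]$, and I would state this carefully to avoid claiming a literal group action on a single fixed quiver. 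Once these conventions are fixed, every clause of the theorem reduces to a direct check against the definitions in Sections \ref{Dessins}--\ref{Surface Algebras}.
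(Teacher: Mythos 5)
Your proposal follows essentially the same route as the paper's proof: read $\sigma$ off the nonzero continuations and $\phi$ off the cycles of zero relations, recover $\alpha$ from $\sigma \alpha \phi = \id$, and identify the quiver with the directed medial graph; the paper's own argument is exactly this, stated as a brief sketch. The only substantive difference is that you prove clause (7) directly from the Galois invariance of the passport (cycle types of $\sigma$, $\alpha$, $\phi$), whereas the paper defers that clause to the later sections on the center and normalization --- your version is more self-contained but rests on the same underlying fact.
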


\begin{proof}
The proof is simple once one has seen the definition of the medial quiver and the definition of a surface algebra is then very natural. It is reminiscent of the construction of the Brauer tree for blocks of group algebras in modular representation theory. We simply note that the ideal of gentle relations can be partitioned into disjoint cycles in the quiver. They may overlap themselves. Likewise the nonzero cycles can be partitioned. Once this is done, if one simply defines $\sigma$ and $\phi$ to be the permutations of $Q_1$ giving these cycles, one gets a dessin. The permutation $\alpha$ can be computed from $\sigma \alpha \phi = \id$, and it simply tells us how to glue the cycles (possibly to themselves in some places). The statements concerning the actions of $\sigma$ and $\phi$ and the induced automorphisms will be refined in the following sections and proven there. They will correspond to information given by the center and noncommutative normalization of the complete surface algebra, and the pullback diagram defining a dessin order. 
\end{proof}

Let us look briefly at a few more examples. 

\subsection{The Dihedral Ringel Algebra $\tilde{A}(1)$}
\begin{ex}
Let $\sigma = (1,2), \alpha = (1,2), \phi = (1)(2)$. Then the closed surface algebra $\Lambda(\mathfrak{c})$ given by the constellation $\mathfrak{c}_1 = [\sigma, \alpha, \varphi]_0$ is given by the graph with one vertex and one loop embedded in the sphere. In particular $\Lambda(\mathfrak{c}) = \tilde{A}(1)$ is given by the quiver
\[ \xymatrix{\bullet \ar@(ul,dl)_x \ar@{.>}@(ur,dr)^y} \]
and is isomorphic to $k\la x,y \ra / \la x^2, y^2 \ra$. This is a classic example from the representation theory of associative algebras. See for example Ringel's work \cite{R1}. The genus in this case is then zero. 
\end{ex}

\subsection{$\tilde{A}(2)$}
\begin{ex}
Let $\mathfrak{c}_2=[\sigma, \alpha, \varphi]_2$ be given by $\sigma = (1,4)(2,3) \alpha = (1,3)(2,4), \varphi = (1,2)(3,4)$, then $\chi(\mathfrak{c}_2) = 2$ and $g(\mathfrak{c}_2)=0$. The embedded graph can be represented by the equator of the sphere with two vertices on it. The quiver which comes from this graph is\\
\[
\begin{tikzcd}
1 \arrow[rr, "a_1", bend left=49] \arrow[rr, "b_1"', dashed, bend right=49] &  & 2 \arrow[ll, "a_2" description, bend right] \arrow[ll, "b_2" description, dashed, bend left]
\end{tikzcd}
\]

%\begin{prop} (\cite{Sh}):
%	The algebra $k[\Phi \cap \Gamma(G/H)]$ is a polynomial algebra unless $\bd = (n,n)$, $0<r(a_1) = m< n$, $r(a_2) = n-m$, $r(b_1) = n-m$, and $r(b_2) = m$. In this case $k[\rep(\Lambda, \bd, r)]^{\SL(\bd)} \cong k[\Phi \cap \Gamma(G/H)]$ is a hypersurface. 
%\end{prop}
\end{ex}

\subsection{$\tilde{A}(3)$}
\begin{ex}
Let $\mathfrak{c}_3=[\sigma, \alpha, \varphi]_3$ be defined by $\sigma = (1,6,2,4,3,5), \alpha = (1,4)(2,5)(3,6), \varphi = (1,2,3)(4,5,6)$. Then we have $\chi(\mathfrak{c}_3) = 1-3+2=0$ so $g(\mathfrak{c}_3)=1$. The graph embedded on the torus can be obtained by a gluing of the square to obtain the torus,

the quiver is then, 
\[
\xymatrix{
	& 1 \ar@/^1pc/[dr] \ar@{.>}[dr] & \\
	3 \ar@/^1pc/[ur] \ar@{.>}[ur] & & 2 \ar@/^1pc/[ll] \ar@{.>}[ll]
}
\]

\end{ex}

\subsection{$\tilde{A}(4)$}
\begin{ex}
Let $\mathfrak{c}_4 = [\sigma, \alpha, \varphi]_4$ be given by
$\sigma = (1,8,3,6)(2,5,4,7), \alpha = (1,5)(2,6)(3,7)(4,8), \varphi = (1,2,3,4)(5,6,7,8)$
$\chi(\mathfrak{c}_4)=2-4+2$ so $g(\mathfrak{c}_4) = 1$.

\[
\xymatrix{
	1 \ar@/^1pc/[r] \ar@{.>}[r] &   2 \ar@/^1pc/[d] \ar@{.>}[d] \\
	4 \ar@/^1pc/[u] \ar@{.>}[u] &   3 \ar@/^1pc/[l] \ar@{.>}[l]
}
\]
\end{ex}

%\subsection{$\tilde{A}(5)$}
%\begin{ex}
%The constellation $\mathfrak{c}_5=[\sigma, \alpha, \varphi]_5$ is defined by,
%\begin{itemize}
%	\item $\sigma = (1,10,4,8,2,6,5,9,3,7)$, 
%	\item $\alpha = (1,6)(2,7)(3,8)(4,9)(5,10)$, 
%	\item $\varphi = (1,2,3,4,5)(6,7,8,9,10)$.
%\end{itemize} 
%$\chi(\mathfrak{c}_5) = 1-5+2 = -2$, so $g(\mathfrak{c}_5) = 2$. 
%\[
%\xymatrix{
%	& 1 \ar@/^1pc/[dr] \ar@{.>}[dr] & \\
%	5 \ar@/^1pc/[ur] \ar@{.>}[ur] &  & 2 \ar@/^1pc/[d] \ar@{.>}[d] \\
%	4 \ar@/^1pc/[u] \ar@{.>}[u] &  & 3 \ar@/^1pc/[ll] \ar@{.>}[ll]
%} \]
%\end{ex}

%\subsection{$\tilde{A}(6)$}
%\begin{ex}
%The constellation $\mathfrak{c}_6=[\sigma, \alpha, \varphi]_6$ is defined by,
%\begin{itemize}
%	\item $\sigma = (1,12,5,10,3,8)(2,7,6,11,4,9)$, 
%	\item $\alpha = (1,7)(2,8)(3,9)(4,10)(5,11)(6,12)$, 
%	\item $\varphi = (1,2,3,4,5,6)(7,8,9,10,11,12)$.
%\end{itemize}
%The quiver of $\tilde{A}(6)$ is then,
%\[
%\xymatrix{
%	& 1 \ar@/^1pc/[dr] \ar@{.>}[dr] & \\
%	6 \ar@/^1pc/[ur] \ar@{.>}[ur] & &  2 \ar@/^1pc/[d] \ar@{.>}[d] \\
%	5 \ar@/^1pc/[u] \ar@{.>}[u] & &  3 \ar@/^1pc/[dl] \ar@{.>}[dl]\\
%	& 4 \ar@/^1pc/[ul] \ar@{.>}[ul] & 
%}
%\]
%\end{ex}

\section{Dessin Orders and Surface Orders}\label{Dessin Orders}
In this section we define the \emph{Dessin Orders} and \emph{Surface Orders} which are naturally associated to a dessin, or equivalently to its constellation or surface algebra, and more generally to any graph cellularly embedded in a Riemann surface. These are particularly useful in defining invariants for the action of the absolute Galois group on Dessins. The methods are adapted from the classical theory of $R$-orders, a standard tool used in modular representation theory of finite groups (see for example \cite{B1, B2} for an introduction and \cite{R1}, \cite{RR}, \cite{Ro1, Ro2} for examples of applications outside of modular representation theory).

\subsection{Definitions and Properties}
\begin{defn}
	Let $R$ be a commutative ring, which will generally be a Noetherian domain or a Dedekind domain \footnote{A Dedekind domain $R$ is an integral domain such that every localization $R_{\frp}$ at a prime ideal $\frp$ is a discrete valuations ring. Equivalently, every finitely generated torsion free module is projective, or $R$ is integrally closed and of dimension $\leq 1$. See for example \cite{S1} Proposition $4$, pg. 10} with field of fractions $\KK$, and for a maximal ideal $\fm$ it has residue field $k = R/\fm$. We will generally only work with rings of algebraic integers $\mathscr{O}_K$, coming from field extension of $K/\QQ$, the polynomial ring $\FF[x]$ over a field $\FF$, or localizations and completions of these. 
\end{defn}
	
\begin{defn}
An $R$-\textbf{lattice} is a finitely generated projective module over $R$. In particular, if $R$ is a Dedekind domain, every $R$-lattice is finitely generated and torsion free. 
\end{defn}

\begin{ex}
For example, if $R = \ZZ$, then $\ZZ^2$ is a $\ZZ$-lattice via addition of ordered pairs. As another example, let $R = \CC[x, y]/(xy)$. Then $\CC[x.y]/(xy)$ is an $R$-lattice over itself via the action given by multiplication by $\overline{x}$ and $\overline{y}$, the residues of $x$ and $y$ in $R$. One can visualize this via the maps of the bigraded shifts
\[
\xymatrix{
& R \ar[dl]_{x} \ar[dr]^y & \\
R(-1,0) \ar[d]_x & & R(0,-1) \ar[d]^y \\
R(-2,0) \ar[d]_x & & R(0,-2) \ar[d]^y \\
R(-3,0) \ar[d]_x & & R(0,-3) \ar[d]^y \\
\vdots & & \vdots
}
\]

\end{ex}
	
\begin{defn}
An $R$-\textbf{Order} $\Lambda$ in a $k$-algebra $A$ is a unital subring of $A$ such that
	\begin{enumerate}
		\item $\KK \Lambda = A$, and
		\item $\Lambda$ is finitely generated as an $R$-module. 
	\end{enumerate}
\end{defn}

\begin{defn}
	Let $C = [\sigma, \alpha, \phi]$ be a constellation, and let $\Gamma \hookrightarrow X$ be the associated graph cellularly embedded in the compact Riemann surface $X$. Further, let $n(i)=n_i = |\sigma_i|$ denote the length of the cycle $\sigma_i$ in the permutation $\sigma = \sigma_1 \sigma_2 \cdots \sigma_p$. Remember, for a constellation $C$, and the associated graph $\Gamma$, the length of the (nonzero) cycle in the medial quiver $Q(C)$ with gentle relations, which is associated to $\sigma_i$ (and its corresponding vertex of $\Gamma$) is just the order of the cycle $\sigma_i$.
	\begin{enumerate}
		\item For each cycle $\sigma_i$, corresponding to the vertex $i \in \Gamma_0$, we associate a Dedekind domain $R_i$, with a maximal ideal $\fm_i$, and a \textbf{vertex} ($R_i$-)\textbf{order}, $\Lambda(\sigma_i) = \Lambda_i$. 
		\item The \emph{vertex order} associated to the cycle $\sigma_i$ is then given by a matrix $R$-subalgebra of $\Mat_{n_i \times n_i}(R_i)$. 
		\[
		\Lambda_i = \begin{pmatrix}
			R_i & \fm_i & \fm_i & \cdots & \fm_i & \fm_i \\
			R_i & R_i   & \fm_i & \cdots & \fm_i & \fm_i \\
			R_i & R_i   & R_i   & \cdots  & \fm_i & \fm_i \\
	   \vdots & \vdots & \vdots & \ddots & \vdots & \vdots \\
	   R_i & R_i & R_i & \cdots & R_i & \fm_i \\
	   R_i & R_i & R_i & \cdots & R_i & R_i 
		\end{pmatrix}_{n(i)}
		\]
		In general, $\Lambda_i$ is a \emph{hereditary order} if $R_i$ is local. \footnote{Recall, an algebra, or an order, is \textbf{hereditary} if no module has a minimal projective resolution greater than length one. This means the \textit{projective dimension} of any module is no greater than one, and therefore the global dimension of the algebra is at most one.}.		
		
		\item Let $\Lambda_i^{(k,k)}$ denote the $(k,k)$ entry of $\Lambda_i$ (in $R_i$).  
		\item For each $1 \leq k \leq n_i$ let
		\[ 
		P_{i,1}:= \begin{pmatrix}
		R_i \\ \vdots \\ R_i \\ R_i \\ R_i \\ \vdots \\ R_i \\ R_i
		\end{pmatrix},		 
		\ P(\sigma_i) = \sigma_i \cdot P_{i,1}:= \begin{pmatrix}
		\fm_i \\ R_i \\ \vdots \\ R_i \\ R_i \\ \vdots \\ R_i \\ R_i
		\end{pmatrix}, \cdots,
		\ P(\sigma_i^{k-1}) = P_{i,k}= \begin{pmatrix}
		\fm_i \\ \vdots \\ \fm_i \\ R_i \\ R_i \\ \vdots \\ R_i \\ R_i
		\end{pmatrix}, \cdots ,
		P_{i,n_i}:= \begin{pmatrix}
		\fm_i \\ \vdots \\ \fm_i \\ \fm_i \\ \fm_i \\ \vdots \\ \fm_i \\ R_i
		\end{pmatrix} 
		\]
		where the $k^{th}$ entry is the first entry equal to $R_i$ for $P_{i,k} = \sigma_i^{k-1} \cdot P_{i,1} = P(\sigma_i^{k-1})$. 
	\end{enumerate}
\end{defn}

The modules $\{P_{i,k}: \ 1 \leq k \leq n_i\}$ give a complete set of non-isomorphic indecomposable projective (left) $\Lambda_i$-modules, with the natural inclusions
\[ P_{i,1} \hookleftarrow P_{i,2} \hookleftarrow \cdots \hookleftarrow P_{i,n_i-1} \hookleftarrow P_{i,n_i} 
\hookleftarrow P_{i,1}. \]
where the final map is given by left-multiplication by $\fm_i$. If we identify $P_{i,k}$ with the edge 
$e_k^i = e_k(\sigma_i)$, where $\sigma_i = (e_1^i, e_2^i, ..., e_{n_i}^i)$ is a cyclic permutation, then the chain of 
inclusions can be interpreted in terms of the cycle $\sigma_i$. From the embedding $\Gamma
\hookrightarrow X$ given by the constellation $C = [\sigma, \alpha, \phi]$, this can be interpreted as 
walking clockwise around the vertex of $\sigma_i$. We will take $P_{i,k} = P_{i,k+n_i}$, and each $e_k^i$ is 
multiplied by the automorphism $\sigma_i$, i.e. there is some multiplication by a 
power of $\fm_i$ involved. 
\indent In particular, conjugation\footnote{In any number field, Dedekind domain, or complete discrete valuation ring primes are invertible.} by 
\[ \sigma_i= \begin{pmatrix}
	0 	  & 0 & 0 & \cdots & 0 & \fm_i \\
	1 	  & 0 & 0 & \cdots & 0 & 0 \\
	0 	  & 1 & 0 & \cdots & 0 & 0 \\
\vdots & \vdots & \vdots & \ddots & \vdots & \vdots \\
	0 	  & 0 & 0 &\cdots & 0 & 0 \\
	0 	  & 0 & 0 &\cdots & 1 & 0 
\end{pmatrix}_{n_i} \]
cyclically permutes the indecomposable projective $\Lambda_i$-modules $P_{i,k}$, and it induces an automorphism of the matrix algebra $\Lambda_i$ which we also call $\sigma_i$. Now, for each pair of cycles $\sigma_i, \sigma_j \in S_{[2m]}$ of $\sigma$, we fix an 
isomorphism
\[ R_i/\fm_i \cong R_j/\fm_j. \] 
Identifying all such rings, let $k = R_i/\fm_i$ for all $\sigma_i \in \Gamma_0$. Let $
\pi_i: R_i \to k$ be a fixed epimorphism with kernel $\fm_i$ a maximal ideal of $R_i$. Now, we have a pull-back 
diagram
\[ 
\xymatrix{
R_{i,j} \ar[r]^{\tilde{\pi}_i}  \ar[d]_{\tilde{\pi}_j} & R_i \ar[d]^{\pi_i} \\
R_j \ar[r]_{\pi_j} & k 
}
\]
which is in general different and non-isomorphic for different choices of $\pi_i$ and $\pi_j$.

\begin{defn}
	Let $\cN(\Lambda) = \prod_{\sigma_i \in \Gamma_0} \Lambda_i$. Let $e_k^i$ be an edge around $\sigma_i$, and let $
	\alpha^{i,j}_{k,l} = (e_k^i, e_l^j)$ be a $2$-cycle of the fixed-point free involution $\alpha$ of $C=[\sigma, \alpha, \phi]$ giving the end vertices $
	\sigma_i$ and $\sigma_j$ of the edge $e_k^i \equiv e_l^j$ under the gluing identifying the half-edges $e_k^i$ and 
	$e_l^j$. It is possible that $\sigma_i=\sigma_j$ if $\alpha^{i,j}_{k,l}$ defines a loop at the vertex $\sigma_i$ in $
	\Gamma$. We replace the product $\Lambda_i^{(k,k)} \times \Lambda_j^{(l,l)}$ in $\Lambda_i \times \Lambda_j$ with $R_{i,j}$. 
	This identifies the $(k,k)$ entry of $\Lambda_i$ with the $(l,l)$ entry of $\Lambda_j$, modulo $\fm_{i,j}$, the ideal of $R_{i,j}$ given via the pullback of $\fm_i$ and $\fm_j$. Doing this for all 
	edges of $\Gamma$, we get the \textbf{Surface Order} $\Lambda := 
	\Lambda(C) = \Lambda(\Gamma)$ associated to the constellation $C$, or equivalently to the embedded 
	graph $\Gamma \hookrightarrow X$. We will call $\Lambda$ a \textbf{Dessin Order} if the constellation $C$ gives a dessin. We will call the hereditary order $\prod_{\sigma_i}\Lambda_i$ the \textbf{normalization} of the  order $\Lambda$. 
\end{defn}

\begin{prop}
%	\begin{enumerate}
%		\item 
%		
		The indecomposable projective $\Lambda$-modules are in bijection with the $2$-cycles $(e^i_k, e^j_l)=\alpha^{i,j}_{k,l}$, of $\alpha \in S_{2m}$ for the constellation $C=[\sigma, \alpha, \phi]$. Equivalently, the indecomposable projectives are in bijection with the edges $\Gamma_1$. We label them as $P_{e}$ for $\alpha^{i,j}_{k,l} = e = (e^i_k, e^j_l) \in \Gamma_1$ attached to the vertices $\sigma_i$ and $\sigma_j$. 
\end{prop}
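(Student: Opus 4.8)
The plan is to realize $\Lambda$ concretely inside its normalization $\cN(\Lambda)=\prod_{\sigma_i\in\Gamma_0}\Lambda_i$ and to extract from the gluing data a complete set of primitive orthogonal idempotents indexed by the edge set $\Gamma_1$. Recall that $\cN(\Lambda)$ has primitive orthogonal idempotents $\epsilon_{i,k}$, where $\epsilon_{i,k}$ is the diagonal matrix unit $E_{kk}$ sitting in the factor $\Lambda_i\subseteq\Mat_{n_i\times n_i}(R_i)$; these correspond bijectively to the half-edges $e_k^i$, with $\cN(\Lambda)\epsilon_{i,k}=P_{i,k}$. Since $\alpha$ is a fixed-point-free involution of the set of $2m$ half-edges, each half-edge $e_k^i$ lies in exactly one $2$-cycle, so to each edge $e=\alpha^{i,j}_{k,l}=(e_k^i,e_l^j)$ we may unambiguously associate the element $f_e:=\epsilon_{i,k}+\epsilon_{j,l}\in\cN(\Lambda)$.

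First I would verify that $f_e\in\Lambda$. A tuple $(M_1,\dots,M_p)\in\cN(\Lambda)$ lies in $\Lambda$ precisely when, for every edge $(e_k^i,e_l^j)$, the entries $(M_i)_{kk}$ and $(M_j)_{ll}$ have the same image in the common residue field $k$; for $f_e$ the only nonzero diagonal entries are $(M_i)_{kk}=1=(M_j)_{ll}$, and at every other edge the relevant entries are $0$, so every congruence holds. Because the matrix units occurring in distinct $f_e$'s are disjoint, the $f_e$ are pairwise orthogonal idempotents, and $\sum_{e\in\Gamma_1}f_e=\sum_i\sum_k\epsilon_{i,k}=1_{\cN(\Lambda)}\in\Lambda$. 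This gives the Peirce decomposition $\Lambda=\bigoplus_{e\in\Gamma_1}\Lambda f_e$ of the regular module, so each $P_e:=\Lambda f_e$ is a projective left $\Lambda$-module.

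Next I would show that each $f_e$ is primitive, i.e. that $P_e$ is indecomposable. One computes $f_e\,\cN(\Lambda)\,f_e=\epsilon_{i,k}\Lambda_i\epsilon_{i,k}\times\epsilon_{j,l}\Lambda_j\epsilon_{j,l}=\Lambda_i^{(k,k)}\times\Lambda_j^{(l,l)}=R_i\times R_j$, and restricting to the elements that arise from $\Lambda$ imposes exactly the congruence defining the pullback ring $R_{i,j}$ of $\pi_i$ and $\pi_j$ — any pair $(r,s)$ with $\pi_i(r)=\pi_j(s)$ is realized by $rE_{kk}^{(i)}+sE_{ll}^{(j)}\in\Lambda$ — so $\End_\Lambda(P_e)\cong f_e\Lambda f_e\cong R_{i,j}$. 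As $R_i$ and $R_j$ are domains, $R_{i,j}$ has no idempotents besides $0$ and $1$, whence $P_e$ is indecomposable; and when the $R_i$ are local, $R_{i,j}$ is itself local with maximal ideal $\fm_{i,j}$, its non-units being exactly the pairs lying in $\fm_i\times\fm_j$, so $P_e$ then has local endomorphism ring. Invoking Krull--Schmidt (say over the completions $\widehat R_i$, or after localizing each $R_i$ at $\fm_i$) one concludes that every indecomposable projective $\Lambda$-module is isomorphic to some $P_e$.

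It remains to check that $e\mapsto P_e$ is injective on isomorphism classes, which I would handle by computing tops. Using once more that each half-edge is glued exactly once, the diagonal-reduction map $\Lambda\to\prod_{e\in\Gamma_1}k$ sending $(M_i)_i$ to the tuple of residues of its glued diagonal entries is a well-defined surjective ring homomorphism whose kernel $J$ consists of the elements all of whose diagonal entries lie in the respective $\fm_i$; one verifies that $J=\rad\cN(\Lambda)\cap\Lambda$, hence $J\subseteq\rad\Lambda$, while $\Lambda/J\cong\prod_{e\in\Gamma_1}k$ is semisimple, so in fact $\rad\Lambda=J$ and $\Lambda/\rad\Lambda\cong\prod_{e\in\Gamma_1}k$. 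Consequently the tops $P_e/\rad P_e$ are pairwise non-isomorphic simple modules, one per edge, and therefore the $P_e$ are pairwise non-isomorphic. The main obstacle is this last step — pinning down $\rad\Lambda$ from the pullback square and matching its Peirce components with the maximal ideals $\fm_{i,j}$ — together with handling the local-versus-global distinction carefully enough that Krull--Schmidt is legitimately available; by contrast the idempotent computation and the verification $f_e\in\Lambda$ are routine once the embedding $\Lambda\hookrightarrow\cN(\Lambda)$ has been written out explicitly.
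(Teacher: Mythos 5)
The paper states this Proposition and then moves straight on to the next section: there is no proof in the source for your argument to be compared against, so what you have written would be the only proof on record. Your strategy --- exhibiting the orthogonal idempotents $f_e=\epsilon_{i,k}+\epsilon_{j,l}$ inside $\Lambda$, checking they sum to $1$ because $\alpha$ is a fixed-point-free involution, computing $f_e\Lambda f_e$ and $\Lambda/\rad\Lambda$, and invoking Krull--Schmidt over the (complete) local base rings --- is sound and is consistent with everything the paper does assert later, in particular with the pullback squares $\Lambda/\rad(\Lambda)\cong k^{|\Gamma_1|}$ and $\cN(\Lambda)/\rad(\cN(\Lambda))\cong k^{2m}$ displayed in the examples of \S\ref{More Examples}.

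There is one concrete gap. Your computation $f_e\,\cN(\Lambda)\,f_e=\Lambda_i^{(k,k)}\times\Lambda_j^{(l,l)}=R_i\times R_j$ silently assumes $i\neq j$, so that the cross terms $\epsilon_{i,k}\cN(\Lambda)\epsilon_{j,l}$ vanish because the two matrix units live in different factors of $\prod_i\Lambda_i$. The paper explicitly allows $\sigma_i=\sigma_j$, i.e.\ loops of $\Gamma$, and for a loop $f_e\cN(\Lambda)f_e$ is the full $2\times 2$ corner of the single hereditary order $\Lambda_i$ spanned by $E_{kk},E_{kl},E_{lk},E_{ll}$, so $f_e\Lambda f_e$ is strictly larger than the pullback ring and your identification $\End_\Lambda(P_e)\cong R_{i,j}$ fails as stated. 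The conclusion survives: a nontrivial idempotent in that corner would be a rank-one idempotent $\left(\begin{smallmatrix}x&y\\z&w\end{smallmatrix}\right)$ with $\mathrm{tr}=x+w=1$ and $\det=xw-yz=0$, while the order forces $y\in\fm_i$ and the gluing forces $x\equiv w\pmod{\fm_i}$; then $2x\equiv 1$ rules out characteristic $2$, and otherwise $xw\equiv 4^{-1}\not\equiv 0$ contradicts $xw=yz\in\fm_i$. But this case needs to be argued separately. A smaller point: from $J=\rad\cN(\Lambda)\cap\Lambda$ it does not follow formally that $J\subseteq\rad\Lambda$ (the intersection of a radical with a subring need not land in the subring's radical); the honest justification is that $1+J$ consists of units of $\Lambda$, since the inverse in $\cN(\Lambda)$ of an element of $1+J$ again has all glued diagonal residues equal to $1$ and hence again satisfies the congruences defining $\Lambda$. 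With those two repairs your proof is complete.
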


\section{The Gel'fand-Ponomarev Algebra}\label{Gel'fand-Ponomarev}

\begin{ex}
In the highly influential paper \cite{GP}, Gel'fand and Ponomarev investigated the indecomposable representations of the Lorentz group, which is equivalent to classifying the "\emph{Harish-Chandra modules}" of the Lie algebra $\sln_2$.  They used methods now standard in the representation theory of so-called \emph{string algebras}. Since $\sl_2(\CC)$ is the Lie algebra of both the Lorentz group and $\SL_2(\CC)$, they study representations of $\sln_2(\CC)$, as well as the Lie algebra $\su_2$ of the maximal compact subgroup $\SU_2 \subseteq \SL_2(\CC)$. Note, $\sln_2(\CC) = \su_2 \otimes \CC$. In \cite{FH} one sees the following diagram for $\sln_2(\CC)$
\[ \xymatrix{
\cdots \ar[r]_{X} & V_{\alpha-4} \ar@(ul,ur)^{H} \ar[r]_{X} \ar@/_/[l]_{Y} & V_{\alpha-2} \ar@(ul,ur)^{H} \ar[r]_{X} \ar@/_/[l]_{Y} & V_{\alpha} \ar[r]_{X} \ar@/_/[l]_{Y} \ar@(ul,ur)^{H}  & V_{\alpha+2} \ar@(ul,ur)^{H} \ar[r]_{X} \ar@/_/[l]_{Y} & \ar@/_/[l]_{Y} \cdots
}\]
with the typical basis, 
\[ H_+ = \begin{pmatrix}
 0 & 1 \\
 0 & 0
 \end{pmatrix}, \quad H_-=\begin{pmatrix}
 0 & 0 \\
 1 & 0
 \end{pmatrix}, \quad H_3 = \begin{pmatrix}
 1 & 0 \\
 0 & -1
 \end{pmatrix}\]
In their study of Harish-Chandra modules\footnote{It seems very likely under the construction given here that the Bruhat-Tits Building for $\SL_2$, described by Serre in \cite{S4}, in II \S 1.1, is closely related to the universal cover of the surface algebras being a directed four regular tree, and the Gel'fand-Ponomarev algebras being related to an amalgam of two free (noncommutative) associative algebras on two generators, and being closely related to the Lie algebra $\sln_2(\CC)$.}, 
they asked the following question:
\emph{Let $k$, be a field and let $V_1$ and $V_2$ be finite dimensional $k$-vector spaces. Suppose we have nilpotent operators 
\[ H_+ : V_1 \to V_2, \quad H_{-}: V_2 \to V_1, \quad H_3: V_2 \to V_2 \]
such that $YZ = 0 = ZX$. Fix a basis of $V_1$ and $V_2$ and classify all canonical forms of $H_{\pm}$ and $H_3$. 
}
 If we take instead
 \[ V = V_1 \oplus V_2, \quad X = \begin{pmatrix}
 H_{-}  & 0 \\
 0 & H_{+} 
 \end{pmatrix}, \quad Y = \begin{pmatrix}
 0 & H_3 \\
 0 & 0
 \end{pmatrix} \]
then this is equivalent to classifying all representations of the following path algebra:

$I=\la xy, yx \ra$, $\Lambda = kQ/I$, 

\[ \xymatrix{
	\bullet \ar@(ul,dl)_{x} \ar@(dr,ur)_{y}
} \]
\\
The surface algebra is then $k\la  x, y \ra  /\la xy, yx \ra\cong k[x,y]/(xy)$. We may then take the dessin order to be $\Lambda = R_{1,2} = k[x,y]/(xy)$ which has normalization $R_1 \times R_2 = k[x] \times k[y]$, with maximal ideals $\fm_1 = (x)$ and $\fm_2 = (y)$ respectively. Fixing an isomorphism $R_1/\fm_1 = k \to k = R_2/\fm_2$, we get a gluing
\[ \xymatrix{
R_{1,2} = k[x,y]/(xy) \ar[r] \ar[d] & k[y] \ar[d] \\
k[x]  \ar[r] & k
}\]
The completion of the path algebra $\Lambda = kQ/I$ is then $\widehat{\Lambda} = k[[x,y]]/(xy)$.
\end{ex}

\section{Some Examples}\label{Examples}

\begin{ex}
	Let $R_i = k[[x_i]]$ have maximal ideal $\mathfrak{m}_i = (x_i)$, with residue field $k=R_i/\mathfrak{m}_i$. 
	Let $\Gamma$ be the genus zero graph,  
	\[
	\xymatrix{
	\sigma_1 \ar@{-}[r]^{\alpha^{1}} & \sigma_2 \ar@{-}[r]^{\alpha^{2}} & \sigma_3 \ar@{-}[r]^{\alpha^{3}} & \sigma_4
	}
	 \]
	 given by the constellation $C=[\sigma, \alpha, \phi]$ such that 
	 $\sigma = (1)(2,3)(4,5)(6) = \sigma_1 \sigma_2 \sigma_3 \sigma_4$, and $\alpha = (1,2)(3,4)(5,6) = (e_1)(e_2)(e_3)$. We have then that 
	 \[ \cN(\Lambda) = \left\lbrace \begin{pmatrix}
		 R_1
	 \end{pmatrix}, \begin{pmatrix}
		 R_2 & \fm_2 \\
		 R_2 & R_2
	 \end{pmatrix}, \begin{pmatrix}
	 R_3 & \fm_3 \\
	 R_3 & R_3
	 \end{pmatrix}, \begin{pmatrix}
	 	R_4 
	 \end{pmatrix}\right\rbrace = \prod_{i=1}^4 \Lambda_i \]
This is contained in the matrix algebra
\[ \Mat_{1\times 1}(k[[x_1]]) \times \Mat_{2 \times 2}(k[[x_2]]) \times  \Mat_{2\times 2}(k[[x_3]]) \times  \Mat_{1\times 1}(k[[x_4]]). \]
We then have the congruences of diagonal entries modulo 
	 \[ \mathfrak{m} = (x_ix_j)_{i \neq j} = (x_1x_2, x_1x_3, x_1x_4, x_2x_3, x_2x_4, x_3x_4)\]
We then have 
\[ \Lambda = 
\begin{pmatrix}
R_{1,2} & \fm_2 & 0 \\
R_2 & R_{2,3} & \fm_3 \\
0 & R_3 & R_{3,4} \\
\end{pmatrix} 
\]
We have the following diagrams
\[ \xymatrix{
&& k[[x_1,x_2]]/(x_1x_2) \ar[r] \ar[d] & k[[x_1]] \ar[d] \\
&k[[x_2,x_3]]/(x_2x_3) \ar[r] \ar[d] & k[[x_2]] \ar[d] \ar[r] & k \\
k[[x_3,x_4]]/(x_3x_4) \ar[r] \ar[d] & k[[x_3]] \ar[d] \ar[r] & k & \\
k[[x_4]] \ar[r] & k & & 
}\]
which gives us 
\[ R_{1,2} = k[[x_1,x_2]]/(x_1x_2), \ R_{2,3} = k[[x_2,x_3]]/(x_2x_3), \ R_{3,4} = k[[x_3,x_4]]/(x_3x_4).\]
Notice, the equalities in $k=R_i/\mathfrak{m}_i$ (i.e. equalities of residues modulo $\mathfrak{m}_i$) are given by $\alpha$ in the constellation $C = [\sigma, \alpha, \phi]$. We then get a pullback diagram
\[
\xymatrix{
\Lambda \ar[rr]^{\pi_1} \ar[d]_{\iota_1} & & \Lambda/\rad(\Lambda)= k^3  \ar[d] \\
\cN(\Lambda) \ar[rr]_{\pi_2} & & \cN(\Lambda)/\rad(\cN(\Lambda) = k^6
}
\]
where $\cN(\Lambda) = \prod_{i=1}^4 \Lambda_i$, $\Lambda_i = k[[x_i]]$ for $i=1,...,4$ and $\fm_i = (x_i) = \rad(k[[x_i]])$, so that $\rad(\Lambda_1) = k = \rad(\Lambda_4)$ and $\rad(\Lambda_2) = k \times k = \rad_k(\Lambda_3)$; and $\rad(\Lambda)) = k \times k \times k$. The automorphisms given by $\sigma_2$ and $\sigma_3$ on $\Lambda_2$ and $\Lambda_3$ are
\[ \begin{pmatrix}
	0 & (x_2) \\
	1 & 0
\end{pmatrix}, \quad \text{and} \quad \begin{pmatrix}
	0 & (x_3) \\
	1 & 0
\end{pmatrix} \]

We can think of this as a gluing of the matrix algebra $\cN(\Lambda)$ as in the following diagram over $\cN(\Lambda)$

\[ \cN(\Lambda) = \left\lbrace \begin{pmatrix}
		 R_1
	 \end{pmatrix}, \begin{pmatrix}
		 R_2 & \fm_2 \\
		 R_2 & R_2
	 \end{pmatrix}, \begin{pmatrix}
	 R_3 & \fm_3 \\
	 R_3 & R_3
	 \end{pmatrix}, \begin{pmatrix}
	 	R_4 
	 \end{pmatrix}\right\rbrace = \prod_{i=1}^4 \Lambda_i \]

\begin{tikzcd}
\fbox{$R_1$} \arrow[rr, "\alpha_1" description, no head] &  & \fbox{$R_2$} \arrow[rd, "\sigma_2" description, no head] & (x_2) &  & \fbox{$R_3$} \arrow[rd, "\sigma_3" description, no head] & (x_3) &  &  \\
 &  & R_2 & \fbox{$R_2$} \arrow[rru, "\alpha_2" description, no head] &  & R_3 & \fbox{$R_3$} \arrow[rr, "\alpha_3" description, no head] &  & \fbox{$R_4$}
\end{tikzcd}

Further, this is exactly the completion of the surface algebra associated to the graph
\[
\xymatrix{
	\sigma_1 \ar@{-}[r]^{\alpha^{1}} & \sigma_2 \ar@{-}[r]^{\alpha^{2}} & \sigma_3 \ar@{-}[r]^{\alpha^{3}} & \sigma_4
}
\]
In particular, we have the following quiver with relations 
\[I = \la ba, cb, dc, ed, fe, af \ra ,\]
\[ \xymatrix{
	\bullet_x \ar@(ul,dl)_{a} \ar@/_/[r]_b  &   \bullet_y \ar@/_/[r]_d \ar@/_/[l]_g  &   \bullet_z \ar@/_/[l]_f  \ar@(dr,ur)_{e}
}\]
\end{ex}

\begin{ex}\label{torus order diagram}
Let us look at the torus dessin given by $C = [\sigma, \alpha, \phi]$ where
\[ \sigma = (1,2,3)(4,5,6), \quad \alpha= (1,4)(2,5)(3,6) \]

\begin{center}
\fbox{\includegraphics[
    page=1,
    width=280pt,
    height=280pt,
    keepaspectratio
]{torus.pdf}}
\end{center}

We have the automorphisms of $\Lambda_i$ and $\Lambda_2$:
\[ 
\begin{pmatrix} 
0 & 0 & \sigma_1 \\ 
1 & 0 & 0 \\ 
0 & 1 & 0 
\end{pmatrix} \quad \quad \quad 
\begin{pmatrix} 
0 & 0 & \sigma_2 \\ 
1 & 0 & 0 \\ 
0 & 1 & 0 
\end{pmatrix} \]

we may represent the action of $\sigma \alpha = \phi^{-1}$ as the product
\[
\begin{pmatrix} 
0 & 0 & \sigma_1 & 0 & 0 & 0 \\
1 & 0 & 0 & 0 & 0 & 0 \\ 
0 & 1 & 0 & 0 & 0 & 0 \\
0 & 0 & 0 & 0 & 0 & \sigma_2 \\ 
0 & 0 & 0 & 1 & 0 & 0 \\ 
0 & 0 & 0 & 0 & 1 & 0 
\end{pmatrix} 
\begin{pmatrix}
0 & 0 & 0 & 1 & 0 & 0 \\
0 & 0 & 0 & 0 & 1 & 0 \\
0 & 0 & 0 & 0 & 0 & 1 \\
1 & 0 & 0 & 0 & 0 & 0 \\
0 & 1 & 0 & 0 & 0 & 0 \\
0 & 0 & 1 & 0 & 0 & 0 \\
\end{pmatrix} \ \ \bullet \ \
\begin{pmatrix}
k[[x_1]]  & (x_1) & (x_1) & 0 & 0 & 0 \\
k[[x_1]] & k[[x_1]] & (x_1) & 0 & 0 & 0 \\
k[[x_1]] & k[[x_1]] & k[[x_1]] & 0 & 0 & 0 \\
0 & 0 & 0 & k[[x_2]] & (x_2) & (x_2) \\
0 & 0 & 0 & k[[x_2]] & k[[x_2]] & (x_2) \\ 
0 & 0 & 0 & k[[x_2]] & k[[x_2]] & k[[x_2]]
\end{pmatrix}
\]
This gives the following diagram, 
\[\xymatrix{
k[[x_1]] \ar@/^/@{-}[rrrr] & (x_1) & (x_1) & & k[[x_2]] & (x_2) & (x_2) \\
k[[x_1]] & k[[x_1]] \ar@/^/@{-}[rrrr] & (x_1) & & k[[x_2]] & k[[x_2]] & (x_2) \\ 
k[[x_1]] & k[[x_1]] & k[[x_1]] \ar@/^/@{-}[rrrr] & & k[[x_2]] & k[[x_2]] & k[[x_2]]
}\]

If we allow $\sigma_2$ to act on $\Lambda_2$ via left multiplication, 
\[ 
\begin{pmatrix} 
0 & 0 & \sigma_2 \\ 
1 & 0 & 0 \\ 
0 & 1 & 0 
\end{pmatrix} \ \bullet \ 
\begin{pmatrix}
k[[x_2]] & (x_2) & (x_2) \\
k[[x_2]] & k[[x_2]] & (x_2) \\ 
k[[x_2]] & k[[x_2]] & k[[x_2]]
\end{pmatrix}
\]

this permutes the projective modules over $\Lambda_2$
\[ P(\sigma_2) \to P(\sigma_2^2) \to P(\sigma_2^3) \to P(\sigma_2) \]
This gives a new gluing via $\alpha = (1,5)(2,6)(3,4)$ corresponding to the permutation matrix
\[
\begin{pmatrix}
0 & 0 & 0 & 0 & 1 & 0 \\
0 & 0 & 0 & 0 & 0 & 1 \\
0 & 0 & 0 & 1 & 0 & 0 \\
0 & 0 & 1 & 0 & 0 & 0 \\
1 & 0 & 0 & 0 & 0 & 0 \\
0 & 1 & 0 & 0 & 0 & 0 \\
\end{pmatrix}
\]

we may represent the action of $\sigma \alpha = \phi^{-1}$ as the product
\[
\begin{pmatrix} 
0 & 1 & 0 & 0 & 0 & 0 \\
0 & 0 & 0 & 0 & 0 & 0 \\ 
\sigma_1 & 0 & 0 & 0 & 0 & 0 \\
0 & 0 & 0 & 0 & 1 & 0 \\ 
0 & 0 & 0 & 0 & 0 & 1 \\ 
0 & 0 & 0 & \sigma_2 & 0 & 0 
\end{pmatrix} 
\begin{pmatrix}
0 & 0 & 0 & 0 & 1 & 0 \\
0 & 0 & 0 & 0 & 0 & 1 \\
0 & 0 & 0 & 1 & 0 & 0 \\
0 & 0 & 1 & 0 & 0 & 0 \\
1 & 0 & 0 & 0 & 0 & 0 \\
0 & 1 & 0 & 0 & 0 & 0 \\
\end{pmatrix} \ \ \bullet \ \
\begin{pmatrix}
k[[x_1]]  & (x_1) & (x_1) & 0 & 0 & 0 \\
k[[x_1]] & k[[x_1]] & (x_1) & 0 & 0 & 0 \\
k[[x_1]] & k[[x_1]] & k[[x_1]] & 0 & 0 & 0 \\
0 & 0 & 0 & k[[x_2]] & (x_2) & (x_2) \\
0 & 0 & 0 & k[[x_2]] & k[[x_2]] & (x_2) \\ 
0 & 0 & 0 & k[[x_2]] & k[[x_2]] & k[[x_2]]
\end{pmatrix}
\]
This can be pictured via the new diagram, 
\[\xymatrix{
k[[x_1]] \ar@/^/@{-}[rrrrrd] & (x_1) & (x_1) & & k[[x_2]] & (x_2) & (x_2) \\
k[[x_1]] & k[[x_1]] \ar@/^/@{-}[rrrrrd] & (x_1) & & k[[x_2]] & k[[x_2]] & (x_2) \\ 
k[[x_1]] & k[[x_1]] & k[[x_1]] \ar@/^/@{-}[rruu] & & k[[x_2]] & k[[x_2]] & k[[x_2]]
}\]

\end{ex}

Suppose we have a dessin $[\sigma, \alpha, \phi]$, with $\sigma = \sigma_1 \cdots \sigma_p$. The projective modules may be represented by their \textbf{Loewy diagrams}
\[
\xymatrix{
                                            & x \ar[dl]_{\sigma_r} \ar[dr]^{\sigma_r} & \\
\sigma_r x \ar[d]_{\sigma_r} &                                                            & \sigma_s x \ar[d]^{\sigma_s} \\
\sigma_r^2 x \ar[d]_{\sigma_r} &                                                        & \sigma_s^2 x \ar[d]^{\sigma_s} \\
\sigma_r^3 x \ar[d]_{\sigma_r} &                                                         & \sigma_s^3 x \ar[d]^{\sigma_s} \\
\vdots \ar[d]_{\sigma_r} &                                                                   & \vdots \ar[d]^{\sigma_s} \\
\fbox{$\sigma_r^{\lcm(|\sigma_r|, |\sigma_s|)} x = x \ar[d]^{\sigma_s} $}&      & \fbox{$\sigma_s^{\lcm(|\sigma_r|, |\sigma_s|)} x = x \ar[d]^{\sigma_s}$} \\
                                              \vdots &                                                & \vdots 
}\]
where $x \in Q_0$ is some vertex in the quiver of the dessin, or equivalently some edge in the graph $\Gamma$. The two "arms" of the projective correspond to two cycle of edges around two vertices of $\Gamma$ starting at the edge $x \in \Gamma_1$. Equivalently, they correspond to two nonzero cycles in the quiver starting at the vertex $x \in Q_0$. 

Notice, the two arms meet back up at $"x"$ after $\lcm(|\sigma_r|, |\sigma_s|)$ many applications of $\sigma_r$ and $\sigma_s$. This is not to imply that they are identified with one another. In fact, there is a geometric interpretation of the module category (as well as the derived category) for surface algebras that represents these modules as paths on the corresponding Riemann surface (see for example \cite{BS}). Let us illustrate this by example. Take the algebra and dessin order given by the torus dessin $C = [\sigma, \alpha, \phi]$ where
\[ \sigma = (1,2,3)(4,5,6), \quad \alpha= (1,4)(2,5)(3,6) \]
Then the dessin order has normalization
\[ \cN(\Lambda) = \Lambda_1 \times \Lambda_2 = 
\left\lbrace\begin{pmatrix}
 R_1 & \fm_1 & \fm_1 \\
 R_1 & R_1 & \fm_1 \\
 R_1 & R_1 & R_1 
\end{pmatrix}
\ \ \times \ \
\begin{pmatrix}
 R_2 & \fm_2 & \fm_2 \\
 R_2 & R_2 & \fm_2 \\
 R_2 & R_2 & R_2 
\end{pmatrix} \right\rbrace \subset \Mat_{3 \times 3}(R_1) \times \Mat_{3 \times 3}(R_2)
\]
and the dessin order is given by gluing the diagonals as in Example \ref{torus order diagram}. The indecomposable projective modules of $\Lambda_1$ and $\Lambda_2$ are
\[
\left\lbrace \begin{pmatrix}
R_1 \\ R_1 \\ R_1 
\end{pmatrix}, \quad 
\begin{pmatrix}
\fm_1 \\ R_1 \\ R_1 
\end{pmatrix}, \quad
\begin{pmatrix}
\fm_1 \\ \fm_1 \\ R_1 
\end{pmatrix} \right\rbrace \quad \text{and} \quad 
\left\lbrace \begin{pmatrix}
R_2 \\ R_2 \\ R_2 
\end{pmatrix}, \quad 
\begin{pmatrix}
\fm_2 \\ R_2 \\ R_2 
\end{pmatrix}, \quad
\begin{pmatrix}
\fm_2 \\ \fm_2 \\ R_2 
\end{pmatrix} \right\rbrace
\]

In terms of the surface algebra, we have the projectives
\[ 
P(1,4) = \begin{pmatrix}
& (1,4) & \\ 
2 & & 5 \\ 
3 & & 6 \\
1 & & 4 \\ 
2 & & 5 \\ 
3 & & 6 \\
\vdots & & \vdots
\end{pmatrix}, \quad 
P(2,5) = \begin{pmatrix} 
& (2,5) & \\ 
3 & & 6 \\
1 & & 4 \\ 
2 & & 5 \\ 
3 & & 6 \\
1 & & 4 \\
\vdots & & \vdots
\end{pmatrix}, \quad 
P(3,6) = \begin{pmatrix}  
& (3,6) & \\
1 & & 4 \\ 
2 & & 5 \\ 
3 & & 6 \\
1 & & 4 \\
2 & & 5 \\
\vdots & & \vdots
\end{pmatrix} \]
with uniserial radicals $U(1), ..., U(6)$ given by the left and right column of the Loewy series for the projectives in each matrix.\footnote{This description is useful for generalizing and applying the results of \cite{RZ} and \cite{Zi} in order to understand the Artin braid group action on dessins.}

For concreteness of examples, we will take the dessin order $\Lambda$ associated to a constellation $C=[\sigma, \alpha, \phi]$ to be the \emph{completed path algebra} of the surface algebra associated to $C$.

\section{Basic Invariants of $\cG(\overline{Q}/Q)$}\label{Basic Invariants}
In this section we compute the center of dessin orders and surface algebras, as well as the (noncommutative) normalizations. We then prove that these are invariant under the action of the absolute Galois group on dessins. Let $C = [\sigma, \alpha, \phi]$ and let $\Lambda$ be the associated dessin order. Denote by $\cZ(\Lambda)$ the \textit{center}, and $\cN(\Lambda)$ the \textit{normalization}.

\begin{theorem}\label{basic invariants main theorem}
Suppose two constellations $C = [\sigma, \alpha, \phi]$ and $C' = [\sigma', \alpha', \phi']$ lie in the same orbit under the action of $\cG(\overline{Q}/Q)$. Further, let $\Lambda$ and $\Lambda'$ be their associated (completed) surface algebras. Then the following isomorphisms hold.
\begin{enumerate}
\item $\cZ(\Lambda) \cong \cZ(\Lambda')$
\item $\cN(\Lambda) \cong \cN(\Lambda')$
\end{enumerate}
\end{theorem}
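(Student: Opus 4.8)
The plan is to reduce both isomorphisms to a single classical fact: the cycle type of $\sigma$ is constant on orbits of $\cG(\overline{\QQ}/\QQ)$, since it is part of the \emph{passport} of the dessin (and for the action on the surface algebra it is exactly what the final clause of the theorem in Section~\ref{Surface Algebras} records). Concretely, I would show that, as abstract $k$-algebras, $\cN(\Lambda)$ depends only on the multiset $\{|\sigma_i|\}$ of cycle lengths of $\sigma$, and $\cZ(\Lambda)$ depends only on the number $p=|\sigma|$ of cycles; granting this, two constellations in one Galois orbit have the same cycle type of $\sigma$, and the two isomorphisms follow at once.

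For the normalization this is immediate from the definition: $\cN(\Lambda)=\prod_{\sigma_i}\Lambda_i$ with $\Lambda_i\subset\Mat_{n_i\times n_i}(R_i)$ the standard hereditary tiled order and $n_i=|\sigma_i|$, and since all $R_i$ have been identified with $k[[t]]$ together with fixed residue identifications $R_i/\fm_i\cong k$, the isomorphism type of $\Lambda_i$ depends only on $n_i$; hence $\cN(\Lambda)$ is determined by the cycle type of $\sigma$. For the center I would first note that the projection $\Lambda\to\Lambda_i$ onto each factor is surjective (the gluing only constrains diagonal entries modulo $\fm$), so a central element $z=(z_i)_i\in\Lambda$ has each $z_i$ central in $\Lambda_i$, hence scalar: $z_i\in R_i$. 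The gluing conditions then assert precisely that $\pi_i(z_i)=\pi_j(z_j)$ in $k$ whenever $\alpha^{i,j}_{k,l}=(e^i_k,e^j_l)$ is a $2$-cycle of $\alpha$; since $\Gamma$ is connected these force all residues $\pi_i(z_i)$ to coincide and impose nothing else (a loop at $\sigma_i$ gives the vacuous condition $\pi_i(z_i)=\pi_i(z_i)$, and parallel edges give repetitions). Therefore
\[ \cZ(\Lambda)=\{(z_i)_i\in\textstyle\prod_i R_i : \pi_1(z_1)=\cdots=\pi_p(z_p)\}\;\cong\;k[[x_1,\dots,x_p]]/(x_ix_j : i\neq j), \]
which depends only on $p$. (With $k[[t]]$ replaced by $k[t]$ the same computation handles the non-completed surface algebra.)

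It remains to combine this with Galois-invariance of the passport. If $C$ and $C'$ are Galois conjugate, the associated Belyi pairs have the same ramification data over $0,1,\infty$, so $\sigma$ and $\sigma'$ have the same cycle type; equivalently the Galois-induced automorphism of $k^{Q_1(\sigma)}$ permutes the cycles $\sigma_i$ among themselves preserving lengths. A length-preserving bijection $\pi$ of the cycles then gives $\Lambda_i\cong\Lambda'_{\pi(i)}$ for every $i$, whence $\cN(\Lambda)\cong\cN(\Lambda')$, while $|\sigma|=|\sigma'|$ gives $\cZ(\Lambda)\cong\cZ(\Lambda')$. The hard part will be the center computation: one must verify that the pullback presentation of $\Lambda$ imposes exactly the connectedness constraint on the diagonal residues and no hidden relations among the $z_i$, so that $\cZ(\Lambda)$ genuinely forgets all of the topology of $\Gamma$ except the integer $p$. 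The appeal to Galois-invariance of the passport is classical (Grothendieck; see e.g.\ \cite{LZ}, \cite{JW}) and enters only through the bookkeeping of the relabeling of cycles.
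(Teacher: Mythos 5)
Your proposal is correct and follows the same overall strategy as the paper: both arguments reduce Theorem \ref{basic invariants main theorem} to the classical Galois-invariance of the passport (the cycle types of $[\sigma,\alpha,\phi]$) after showing that $\cZ(\Lambda)$ and $\cN(\Lambda)$ are determined by the cycle data of $\sigma$ alone. The treatment of $\cN(\Lambda)$ is identical in substance: it is read off from the definition as a product of tiled hereditary orders indexed by the cycles of $\sigma$, so only the multiset $\{|\sigma_i|\}$ matters. The one place where you genuinely diverge is the computation of the center. The paper's lemma works inside the completed path algebra and exhibits explicit central generators $z_i=\sum_{k}\mathfrak{c}_k$, the sums of all rotations of the full nonzero cycle attached to $\sigma_i$, checking directly that each $z_i$ commutes with every arrow and that $z_iz_j=0$ for $i\neq j$. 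You instead work in the pullback/matrix presentation: a central element must be scalar in each local order $\Lambda_i$ (centre of a tiled order in $\Mat_{n_i}(R_i)$ is $R_i\cdot I$), and the gluing together with connectedness of $\Gamma$ pins all diagonal residues to a single constant, giving $k[[z_1,\dots,z_p]]/(z_iz_j)_{i\neq j}$ directly. The two computations land on the same ring; yours has the merit of making the ``no hidden relations'' issue explicit (note that when $\alpha$ has a loop at $\sigma_i$ the projection $\Lambda\to\Lambda_i$ is not literally surjective, only onto the subring where the two glued diagonal entries agree mod $\fm_i$, so one should check that the centralizer of that subring is still $R_i\cdot I$ --- it is), while the paper's computation has the merit of producing the central elements as honest elements of the path algebra, which is the form used later when the action of $\sigma$ on the arrow span and the nonzero cycles is discussed.
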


\begin{lemma}
Suppose \[A = kQ/I = A_0 \la A_1 \ra = \bigoplus_{d=0}^{\infty} A^{\otimes d} \] 
is the path algebra of the constellation $C=[\sigma, \alpha, \phi]$, where $\sigma = \sigma_1 \sigma_2 \cdots \sigma_r$. It has primitive orthogonal idempotents $\{e_i\}_{i \in Q_0}$. Let $A_{i,j} = e_jAe_i$ be the $k$-linear span of paths in $Q$, from vertex $i$ to $j$. Let $\fm = \prod_{d=1}^{\infty} A^{\otimes d}$ denote the \emph{arrow ideal} of $Q$, generated by the arrows $Q_1$. We have then that the \textbf{complete path algebra} is
\[ \Lambda = A_0 \la \la A_1 \ra \ra = \prod_{d=0}^{\infty} A^{\otimes d} \]
Then 
\begin{enumerate}
\item \[ \cZ(\Lambda) \cong A_0 \cdot k[[z_1, z_2, ..., z_r]]/(z_iz_j)_{i \neq j} \]
and
\item \[  \cN(\Lambda) \cong \prod_{\substack{\sigma_i \\ i=1,...,r}} \overline{k\tilde{\mathbb{A}}_{|\sigma_i|}^{eq}} \]
where $\overline{k\tilde{\mathbb{A}}_{|\sigma_i|}^{eq}}$ is the completion of the hereditary algebra given by the quiver path algebra, 
\[k\tilde{\mathbb{A}}_{|\sigma_i|}^{eq}:~
\vcenter{\hbox{  
		\begin{tikzpicture}[point/.style={shape=circle, fill=black, scale=.3pt,outer sep=3pt},>=latex]
		\node[point,label={above:$x_1$}] (0) at (0,0) {};
		\node[point,label={right:$x_2$}] (1) at (1.5,-.5) {};
		\node[point,label={left:$x_{|\sigma_i|}$}] (n) at (-1.5,-.5) {};
		\node[point,label={below:$x_{k}$}] (2) at (0,-4) {};
		\node[point,label={right:$x_{k-1}$}] (3) at (1.5,-3.5) {};
		\node[point,label={left:$x_{k+1}$}] (4) at (-1.5,-3.5) {};
		
		% \draw[dotted] (2.2,-1.65)--(2.2,-2.4);
		% \draw[dotted] (-2.2,-1.65)--(-2.2,-2.4);
		
		\path[->]
		(4) edge [dashed,bend left=45]  (n)
		(1) edge [dashed,bend left=45]  (3)
		(0) edge [bend left=15] node[midway, above] {$a_1$} (1)
		(3) edge [bend left=15] node[midway, below] {$a_{k}$} (2)
		(2) edge [bend left=15] node[midway, below] {$a_{k+1}$} (4)
		(n) edge [bend left=15] node[midway, above] {$a_{|\sigma_i|}$} (0);
		\end{tikzpicture} 
}}
\]
\end{enumerate}
\end{lemma}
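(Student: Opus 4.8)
The plan is to read everything off the combinatorics of nonzero paths, which (as recorded in the preceding theorem, and as follows directly from the gentle conditions of Definition~\ref{gentle surface algebra}) is governed by the cycle decomposition of $\sigma$. Each arrow of $Q$ has a unique successor and a unique predecessor, so the arrows partition into $r$ closed cyclic words $c_1,\dots,c_r$ with $c_i$ of length $n_i=|\sigma_i|$, and every nonzero path of positive length in $\Lambda$ is a connected subword of some $c_i^{\infty}$ for a unique $i$. Each vertex $x\in Q_0$ lies on exactly two of the cyclic words, namely the cycles of $\sigma$ around the two endpoints of the corresponding edge of $\Gamma$ (these may coincide, with $x$ occurring twice in one $c_i$, precisely when that edge is a loop). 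First I would record that $e_x\Lambda e_x$ is the closure of the span of $e_x$ together with the nonzero loops at $x$, so that any element commuting with all the idempotents $e_x$ has the ``diagonal'' form $z=\sum_{x\in Q_0}z_x$ with $z_x\in e_x\Lambda e_x$.

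For part (1): such a $z$ is central if and only if $a z_x=z_y a$ for every arrow $a\colon x\to y$, so I would unpack this single relation. The arrow $a$ lies on a unique $c_i$; left multiplication by $a$ annihilates the loops at $x$ supported on the \emph{other} cyclic word through $x$, and right multiplication by $a$ annihilates the loops at $y$ supported on the other cyclic word through $y$, while on the remaining loops ``prepend $a$'' and ``append $a$'' give the same length-shifted bijection from the loops at $x$ on $c_i$ to the loops at $y$ on $c_i$. Hence $a z_x=z_y a$ forces (i) the $e_x$-coefficient of $z_x$ to equal the $e_y$-coefficient of $z_y$, and (ii) the $c_i$-part of $z_x$ to be carried onto the $c_i$-part of $z_y$. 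Running (ii) once around $c_i$ shows the $c_i$-parts are, for all $x$ on $c_i$, the substitutions of a single series $g_i(T)\in Tk[[T]]$ into the element $z_i:=\sum_{x\text{ on }c_i}p_i^{(x)}$, where $p_i^{(x)}$ is the full loop at $x$ around $c_i$ (one summand for each occurrence of $x$ in $c_i$); running (i) and using that $Q$ is connected (because $\Gamma$ is) forces the scalar part to be one constant $\lambda\in k$. Conversely a direct check shows each $z_i$ is central, that $z_iz_j=0$ for $i\ne j$ (a nonzero summand would require a vertex lying on both $c_i$ and $c_j$, but $p_i^{(x)}p_j^{(x)}=0$ by the gentle relations), and that $z_i^m\ne 0$ for every $m$. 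Therefore $\cZ(\Lambda)$ is the closed subalgebra generated by $A_0$ and the $z_i$, i.e.\ the algebra $A_0\cdot k[[z_1,\dots,z_r]]/(z_iz_j)_{i\ne j}$ of the statement.

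For part (2): by definition $\cN(\Lambda)=\prod_{i=1}^{r}\Lambda_i$, so it is enough to identify each vertex order $\Lambda_i\subseteq\Mat_{n_i}(R_i)$, $R_i=k[[x_i]]$, with the completion $\overline{k\tilde{\mathbb A}_{n_i}^{eq}}$ of the path algebra of the equioriented $n_i$-cycle. I would define a map on generators by sending the $k$-th idempotent to $E_{kk}$, the arrow $a_k\colon x_k\to x_{k+1}$ to $E_{k+1,k}$ for $1\le k\le n_i-1$, and the ``wrap'' arrow $a_{n_i}\colon x_{n_i}\to x_1$ to $x_iE_{1,n_i}$. Since $k\tilde{\mathbb A}_{n_i}^{eq}$ is hereditary there are no relations to check, the arrow ideal lands in $\rad\Lambda_i$, and the map is continuous, so it extends to $\overline{k\tilde{\mathbb A}_{n_i}^{eq}}\to\Lambda_i$. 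To see it is bijective I would compare the pieces $e_l(-)e_k$: in the completed path algebra this is free of rank one over $k[[c]]$, where $c=a_{n_i}\cdots a_1$ is the full cycle, generated by the shortest path $x_k\to x_l$; under the map $c\mapsto x_i\,\mathrm{Id}$, the shortest path goes to $E_{l,k}$ when $l\ge k$ (it avoids $a_{n_i}$) and to $x_iE_{l,k}$ when $l<k$ (it uses $a_{n_i}$ once), so the map restricts to an $R_i$-linear isomorphism onto the $(l,k)$-entry of $\Lambda_i$ (which is $R_i$ if $l\ge k$ and $\fm_i$ if $l<k$). Taking the product over $i$ yields the claimed $\cN(\Lambda)\cong\prod_i\overline{k\tilde{\mathbb A}_{n_i}^{eq}}$.

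The main obstacle is the bookkeeping in part (1), in particular the case where some $c_i$ meets a vertex more than once: there $e_x\Lambda e_x$ is noncommutative and contains ``partial loops'' as well as powers of the full loops, so the relation $az_x=z_ya$ must be analyzed in terms of occurrences of $x$ in $c_i$ rather than vertices, and one must check that no extra central elements appear. Part (2) is the by-now-classical identification of a cyclic (Nakayama/tiled) hereditary order with the completed path algebra of an oriented cycle and should be routine.
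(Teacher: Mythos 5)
Your proposal is correct and follows essentially the same route as the paper: part (1) rests on the same central elements $z_i=\sum_k \mathfrak{c}_k$ (sums of the full loops around each cycle of $\sigma$), and part (2) on the factor-by-factor identification of $\overline{k\tilde{\mathbb{A}}_{|\sigma_i|}^{eq}}$ with the tiled order $\Lambda_i$. If anything, your version is more complete: the paper only checks that each $z_i$ commutes with every arrow and asserts (2) as following ``from the definitions,'' whereas you also establish the reverse inclusion $\cZ(\Lambda)\subseteq A_0\cdot k[[z_1,\dots,z_r]]/(z_iz_j)_{i\neq j}$ (including the delicate case of a vertex occurring twice on one cycle of $\sigma$) and write down the isomorphism in (2) explicitly on the pieces $e_l(-)e_k$.
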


\begin{proof}
\begin{enumerate}
\item Let $\sigma_i$ be the cycle of $\sigma$ associated to the $i^{th}$ nonzero cycle in the quiver $Q/I$. Say $\sigma_i$ 
corresponds to the arrows $\{a_1, a_2, ..., a_{n(i)}\}$, where $n_i = |\sigma_i|$. Choosing a distinguished arrow, 
say $a_1$, let $\sigma_i^k$ be identified with $\mathfrak{c}_k= a_k a_{k-1} \cdots a_1a_na_{n-1} \cdots a_{k+1}
$, the cyclic permutation of the arrows in the cycle of $\sigma_i$. Let $z_i = \sum_{k=1}^{n_i} \mathfrak{c}_k$. 
Then $z_i$ commutes with any arrow $b \in Q_1$. Indeed, 
\begin{align*}
bz_i &= b(\mathfrak{c}_1 + \mathfrak{c}_2 + \cdots + \mathfrak{c}_{n_i}) \\
	   &= b\mathfrak{c}_1 + b\mathfrak{c}_2 + \cdots + b\mathfrak{c}_{n_i} 
\end{align*}
and 
\begin{align*}
b\mathfrak{c}_k &= b a_k a_{k-1} \cdots a_1a_na_{n-1} \cdots a_{k+1} \\
						 &\neq 0 \iff ha_k=tb, b \in \sigma_i \\
						 & \quad \iff b=a_{k+1}.
\end{align*}
From this we gather $b \mathfrak{c}_k =  \mathfrak{c}_k b$ and therefore $bz_i = z_ib$. Thus, the subalgebra
\[ k \la \la z_1, z_2, ..., z_r \ra \ra  \subset \Lambda, \]
is commutative with all paths in $Q/I$, $z_iz_j = 0$ if and only if $i \neq j$, and so $\cZ(\Lambda) \cong A_0 \cdot k[[z_1, z_2, ..., z_r]]/(z_iz_j)_{i \neq j}$. 
\item This follows from the definitions. In particular, for $k\tilde{\mathbb{A}}_{|\sigma_i|}^{eq}$, the completion is given by the hereditary order $\Lambda_i$. 
\end{enumerate}
\end{proof}

\begin{proof}
(\textbf{Proof of Theorem} \ref{basic invariants main theorem}): First, the cycle types of the constellation $C=[\sigma, \alpha, \phi]$ are known invariants of the action of the absolute Galois group. From the structure of $\cZ(\Lambda)$ and $\cN(\Lambda)$ given in the definition of dessin orders, normalizations, and from the previous Lemma, we can now see that $\cZ(\Lambda)$\footnote{This will be important when studying gluings of central characters and their restrictions given by representations in the BGG category $\mathcal{O}$ of the semi-simple Lie algebras given by local surface orders.} and $\cN(\Lambda)$ are invariants of this action on $\Lambda$ as well. 
\end{proof}

Alone, this does not seem to provide any significant new results for dessins aside from a reinterpretation of the cycle types of $[\sigma, \alpha, \phi]$ into representation theoretic language. We will study how these invariants along with projective resolutions of simple modules recover the dessin entirely and how each encodes the information of $[\sigma, \alpha, \phi]$. These invariants give some interesting implications in the representation theory of the surface algebras and dessin orders as well. 

\begin{ex}
Take $\Lambda$ to be the completion of the surface algebra from the following quiver with relations 
\[I = \la ba, cb, dc, ed, fe, af \ra ,\]
\[ \xymatrix{
	\bullet_x \ar@(ul,dl)_{a} \ar@/_/[r]_b  &   \bullet_y \ar@/_/[r]_d \ar@/_/[l]_g  &   \bullet_z \ar@/_/[l]_f  \ar@(dr,ur)_{e}
}\]
The we have $\cZ(\Lambda) \cong A_0 \cdot k[[z_1, z_2, z_3, z_4]]/(z_iz_j)_{i \neq j}$, and 
\[ \cN(\Lambda) \cong \overline{k\tilde{\mathbb{A}}_{1}^{eq}} \times \overline{k\tilde{\mathbb{A}}_{2}^{eq}} \times \overline{k\tilde{\mathbb{A}}_{2}^{eq}} \times \overline{k\tilde{\mathbb{A}}_{1}^{eq}}.
\]

\end{ex}

\section{Projective Resolutions of Simple Modules}\label{Resolutions}
We now turn to some more complicated results. Here we will prove that projective resolutions of simple modules over the dessin order $\Lambda$ completely recover the dessin, without any other information required. We give an explicit description of such resolutions and we show classifying dessin orders with given normalization is equivalent to classifying dessins with given monodromy group.\footnote{The results in this section may be used to obtain results on Beilinson's conjectures using the Serre-Swann Theorem and a mixture of algebraic and topological K-theory. This is studied in ongoing work.}

In this section we denote by $P^{\bullet}$ a complex of projective modules
\[ \cdots \to P^{-1} \to P^0 \to P^{1} \to \cdots \]
over some algebra $\Lambda$. Let $\Lambda = \Lambda(C)$ be an order given by the constellation $C=[\sigma, \alpha, \phi]$. 

\begin{theorem}
	\begin{enumerate}
		\item The indecomposable projective modules $P_{e} = P_{\alpha^{i,j}_{k,l}}$ have radical
		\[ \rad(P_e) = U(\sigma_i^{k+1}) \oplus U(\sigma_j^{l+1}) \]
		where $U(\sigma_p^q) \cong P_{p,q} \in \Mod(\cN(\Lambda_p))$ is an indecomposable uniserial $\Lambda$-module and an indecomposable projective $\cN(\Lambda)$-module.
		\item The minimal projective resolution of the simple module $S(\alpha^{i,j}_{k,l}) = S(e^i_k, e^j_l)$ of $\Lambda$, corresponding to the vertex $\alpha^{i,j}_{k,l}=(e^i_k, e^j_l)$ of $Q$ (or equivalently the edge of the same labeling in $\Gamma(C)$ connecting vertex $\sigma_i$ and $\sigma_j$ in $\Gamma$), is infinite periodic. In particular the period $p$ of the minimal resolution $P^{\bullet}(\alpha^{i,j}_{k,l}) = P^{\bullet} \to S(\alpha^{i,j}_{k,l})$ is exactly the least common multiple,
		\[ p(P^{\bullet}(i,j))  = \lcm\{|\cO_{\phi}(e^i_k)|, |\cO_{\phi}(e^j_l)|\}.\]
		where $\cO_{\phi}(e^i_k)$ and $\cO_{\phi}(e^j_l)$ are the orbits under the action of $\phi$ of $e^i_k$ and $e^j_l$ on the two anti-cycles (or relations in $I$) passing through the vertex $\alpha^{i,j}_{k,l}$. 
		\item The differentials in the minimal projective resolution of the simple module $S(\alpha^{i,j}_{k,l})$, $d^m: P^{m} \to P^{m+1}$,
		\[ P^{\bullet}(\alpha_{k,l}^{i,j}): \cdots \to P^{m} \to P^{m+1} \to \cdots \]
		are given by multiplication by the matrix 
		\[ d^m:= \begin{pmatrix}
			a(\phi^{m} \cdot e^i_k)  & 0 \\
			0 & a(\phi^{m} \cdot e^j_l)
		\end{pmatrix}. \]
		where $a(\phi^{m}e^i_k) \in Q_1$ is the arrow with $ta = \phi^{m}e^i_k$ and $ta(\phi^{m} \cdot e^j_l) = \phi^{m} \cdot e^j_l$. 
		\item The syzygies $\Omega^m(\alpha_{k,l}^{i,j})= \ker(d^m)$ are of the form
		\[ \Omega^m(\alpha_{k,l}^{i,j}) = U(\phi^{m} e^i_k)) \oplus U(\phi^{m} e^j_l)) \]
		The uniserial modules at the vertex $\phi^{m} e^i_k$ and $\phi^{m} e^j_l$ which are annihilated by left multiplication by the arrows associated to $P(\phi_i^{m-1}\cdot e^i_k) \to P(\phi_i^{m}\cdot e^i_k)$ and $P(\phi_i^{m-1}\cdot e^j_l) \to P(\phi_i^{m}\cdot e^j_l)$ by definition of the relations $I$. 
\end{enumerate}
\end{theorem}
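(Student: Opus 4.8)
The plan is to work directly with the matrix description of the surface order $\Lambda$ and its normalization $\cN(\Lambda)$, exploiting the fact that every structural feature — projectives, radicals, syzygies, differentials — is governed by the permutations $\sigma$, $\alpha$, $\phi$ of the constellation $C=[\sigma,\alpha,\phi]$. First I would establish part (1). Recall from the Proposition that the indecomposable projective $P_e = P_{\alpha^{i,j}_{k,l}}$ is attached to the edge $e=(e^i_k,e^j_l)$, hence to a pair of diagonal positions in the vertex orders $\Lambda_i$ and $\Lambda_j$ glued along $R_{i,j}$. Its top is the simple $S(e)$, and its radical is obtained by deleting the top row of each of the two ``arms'' of the Loewy diagram displayed before this theorem. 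Each arm is precisely the uniserial walk clockwise around one vertex of $\Gamma$, which is what the notation $U(\sigma_i^{k+1})$ encodes (the index $k+1$ because removing the top of $P_e$ shifts one step along the cycle $\sigma_i$); these uniserials are exactly the indecomposable projective $\cN(\Lambda)$-modules $P_{i,k+1}$, $P_{j,l+1}$ in the list given in the definition of the vertex order. So (1) is a direct reading-off once one identifies the two arms, and the fact that $\rad P_e$ splits as a direct sum (rather than an extension) follows because the two arms meet only at the socle after $\lcm(|\sigma_i|,|\sigma_j|)$ steps, and below the top they share no composition factor that would glue them.

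Next I would treat (3) and (4) together, since the differentials and the syzygies are two sides of the same computation. Given $\rad P_e = U(\sigma_i^{k+1})\oplus U(\sigma_j^{l+1})$ from (1), the first syzygy $\Omega^1(S(e))$ is this radical, and a projective cover of it is $P(\phi\cdot e^i_k)\oplus P(\phi\cdot e^j_l)$: here the key point is that the action of $\phi=\sigma\alpha^{-1}$ on a half-edge tells you which edge (hence which indecomposable projective) covers the uniserial module sitting at that position — this is the anti-cycle, i.e. the cycle of gentle zero-relations $I$, and by Definition \ref{gentle surface algebra} the cover map is multiplication by the unique arrow $a$ with $ta = \phi^{m}\cdot e^i_k$ whose composite with the previous arrow lies in $I$. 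Iterating, the $m$-th term of the resolution is $P(\phi^m\cdot e^i_k)\oplus P(\phi^m\cdot e^j_l)$, the differential $d^m$ is the diagonal matrix of the two covering arrows as stated, and $\Omega^m(S(e)) = U(\phi^m\cdot e^i_k)\oplus U(\phi^m\cdot e^j_l)$ — the uniserials killed on the left by exactly those arrows, which is (4). I would verify exactness by the standard string-module bookkeeping: the kernel of $d^m$ is the submodule of $\rad P(\phi^{m-1}\cdot e)$ annihilated by the next arrow in the string-algebra walk, and this matches $\Omega^m$ by construction.

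Part (2) — periodicity and the precise period — then falls out. Since $\Omega^m(S(e))$ depends only on the pair $(\phi^m\cdot e^i_k,\ \phi^m\cdot e^j_l)$, the resolution returns to its starting term exactly when both half-edges return simultaneously under $\phi$, i.e. after $\lcm\{|\cO_\phi(e^i_k)|,|\cO_\phi(e^j_l)|\}$ steps; minimality of the resolution (no split summands appear, because at each stage the differential is a radical map between indecomposable projectives) forces the period to be exactly this $\lcm$ rather than a proper divisor. One also needs that $\phi^m\cdot e^i_k \neq \phi^{m'}\cdot e^i_k$ for $0\le m < m' < |\cO_\phi(e^i_k)|$, which is immediate from the definition of an orbit.

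The main obstacle, I expect, will be handling self-gluings cleanly: when $\sigma_i = \sigma_j$ (a loop in $\Gamma$) or when the two $\phi$-orbits of $e^i_k$ and $e^j_l$ intersect, the two arms of a projective are no longer ``independent'' at the level of the underlying module, and one must check that the direct-sum decompositions in (1) and (4) still hold and that the diagonal form of $d^m$ in (3) is not spoiled by off-diagonal interaction. I would resolve this by passing to the normalization $\cN(\Lambda) = \prod_i \Lambda_i$, where the ambiguity disappears because the two arms live over (a priori) different factors $\Lambda_i,\Lambda_j$ or, when $i=j$, over two distinct projective $\Lambda_i$-modules in the explicit list $\{P_{i,k}\}$; the pullback square defining $\Lambda$ from $\cN(\Lambda)$ then shows that an $\cN(\Lambda)$-projective resolution descends to the claimed $\Lambda$-projective resolution, the gentle relations $I$ being exactly what controls the descent. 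This reduces every statement to a computation in a hereditary order $\Lambda_i \subset \Mat_{n_i\times n_i}(R_i)$, where projective resolutions of simples are completely transparent.
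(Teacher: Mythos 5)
Your proposal follows essentially the same route as the paper's own proof: read off the radical of $P_e$ as the direct sum of the two uniserials given by the arms of the Loewy diagram, iterate projective covers so that the $m$-th term is $P(\phi^m\cdot e^i_k)\oplus P(\phi^m\cdot e^j_l)$ with diagonal differential given by the covering arrows determined by the zero-relations, and obtain the period as the $\lcm$ of the two $\phi$-orbit lengths. The only substantive difference is that you explicitly flag and handle the self-gluing case ($\sigma_i=\sigma_j$ or intersecting $\phi$-orbits) by descending from the normalization $\cN(\Lambda)$ through the pullback square, a point the paper's proof passes over in silence; this is a worthwhile addition rather than a departure.
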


This will be useful later when explaining how to recover a graph embedded in a Riemann surface entirely in terms of the projective resolutions of the simple modules. 

%Before beginning the proof, let us look at a local picture of the various actions on the dessin and the quiver. 

%\begin{center}
%\begin{tikzcd}[column sep=20pt,row sep=20pt]
% &  &  &  &  &  & \  &  & \  &  &  &  &  &  & \  & \  \\
% &  &  & \  &  & \  &  & \  &  &  &  &  &  & \  & \  &  \\
% &  &  &  &  \arrow[lllddd, "\phi_r^2 \cdot e^i_1" description, no head] &  &  & \  \arrow[rrd] &  &  & \  &  &  &  &  &  \\
% &  &  & \  \arrow[rdd] &  & \  \arrow[rru] \arrow[ll] &  &  &  & \  \arrow[rdd] &  &  &  &  &  &  \\
%\  &  &  &  &  &  &  &  &  &  &  &  &  &  &  &  \\
% & \  &  &  & \  \arrow[ruu] &  &  & \sigma_i \arrow[rrruuu, "e^i_4 = \sigma_i^3 \cdot e^i_1" description, no head] \arrow[rrrr, "e^i_5 = \sigma_i^4 \cdot e^i_1" description, no head] \arrow[uuuu, "e^i_3 = \sigma_i^2 \cdot e^i_1" description, no head] \arrow[llluuu, "\phi_r e^i_1 = e^i_2 = \sigma_i e^i_1" description, no head] \arrow[llllll, "e^i_1 = \phi_r^3 \cdot e^i_1" description, no head] &  &  & \  & {\alpha_{5,k}^{i,j}} \arrow[r, no head] & \sigma_j &  &  &  \\
%\  &  & \  &  &  &  &  &  &  &  &  &  &  &  &  &  \\
% &  &  &  &  &  &  &  &  & \  &  &  &  &  &  &  \\
% &  &  &  &  &  &  & \  &  &  & \  &  &  &  &  &  \\
% &  &  &  &  &  &  & \  &  &  &  &  &  &  &  & 
%\end{tikzcd}
%\end{center}

%\medskip

\begin{proof}
	\begin{enumerate}
		\item First, $\alpha^{i,j}_{k,l} = (e^i_k, e^j_l)=e$, and with fixed labeling of the edges of $\Gamma(C)$, we have $e^i_k = \sigma_i^{k-1}\cdot e^i_1$, and $e^j_l = \sigma_j^{l-1} \cdot e_1^j$, given by the automorphism 
		\[ \sigma_i^k:=
		\begin{pmatrix}
		0 & 0 & \cdots & 0 & \sigma_i \\
		1 & 0 & \cdots & 0 & 0 \\
		\vdots & \vdots & \ddots & \vdots & \vdots \\
		0 & 0 & \cdots & 0 & 0 \\
		0 & 0 & \cdots & 1 & 0 \\
		\end{pmatrix}^k
		\]
		So, $\sigma_i$ acts on the algebra $\Lambda$ by left multiplication of $e^i_{j-1}$ (and therefore $a^i_{j-1}$) by 
		the arrow $\sigma_i a^i_{j-1}=a^i_j$ in the quiver $Q(\Lambda)$. Notice, this multiplication is always 
		nonzero since $\sigma_i = (e_1^i, e_2^i, ..., e_{n_i}^i)$ is a cyclic permutation around the vertex it corresponds to 
		in $\Gamma(C)$, and there is by definition a unique arrow by which $\sigma$ acts on a given idempotent $e^i_{j-1}
		$ 
		(and on $a^i_{j-1}$) lying on this cycle corresponding to the hereditary order $\Lambda_i$ in the pullback diagram 
		defining $\Lambda(C)$. 
		\item Let $P(\alpha_{k,l}^{i,j})$ be the projective cover of $S(\alpha_{k,l}^{i,j})$. From the desription of the radical of 
		$P(\alpha_{k,l}^{i,j})$ as the two uniserial modules in $\cN(\Lambda)$ corresponding to the idempotents $\sigma \cdot e^i_k$ 
		and $\sigma \cdot e^j_l$ in $\Lambda_i$ and $\Lambda_j$ respectively, the next term in the resolution is the direct sum of 
		the two indecomposable projective covers $P(\phi \cdot e^i_k)$ and $P(\phi \cdot e^j_l)$ in $\Mod(\Lambda)
		$. Clearly the kernel of the covering $P(\phi \cdot e^j_l) \to U(\phi \cdot e^j_l)$ is exactly the uniserial 
		$U(\phi^2 \cdot e^j_l)$, and its projective cover is 
		$P(\phi^{2} \cdot e^j_l)$. The kernel of this covering is $U(\phi^{3} \cdot 
		\cdot e^j_l)$. This pattern continues also for $\phi e^i_k$, and the terms 
		$P^m$ in the resolution are
		\[ P(\phi^{m}  e^i_k) \oplus P(\phi^{m} e^j_l). \]
		So the terms have indecomposable direct summands which cycle through the orbit of $e^i_k$ and $e^j_l$ under the 
		action of $\phi$. The orbits are anti-cycles in $I$, the ideal of relations of the surface algebra, and the place at which the two cycle meet up at $\alpha^{i,j}_{k,l} 
		= (e^i_k, e^j_l)$ is exactly $p = \lcm\{|\cO_{\phi}(e^i_k)|, |\cO_{\phi}(e^j_l)|$.
		\item Since the kernel of the cover of a uniserial $P(\alpha^{i,j}_{k,l}) \to U(e^i_k)$ is exactly $U(\sigma e^j_l)$ and 
		it is embedded in $P(\alpha^{i,j}_{k,l})$ as a submodule via multiplication by the arrow $a: ha=\sigma e^i_k$, we get that the differential is indeed, 
		\[ d^m:= \begin{pmatrix}
		a(\phi^{m}\cdot e^i_k)  & 0 \\
		0 & a(\phi^{m} \cdot e^j_l)
		\end{pmatrix}. \]
		\item This now follows from $(1)-(3)$. 
	\end{enumerate}
\end{proof}

\begin{corollary}\label{fixed 3-constellation to fixed normalization}
	Classifying all $3$-constellations with a fixed cycle types $[\lambda_1, \lambda_2, \lambda_3]$ is equivalent to classifying all \emph{dessin orders} with the same normalization, or equivalently all \emph{surface algebras} with the same cycle decomposition with respect to $\sigma$ or $\phi$. In particular, the resolutions of simple modules over $\Lambda$ completely encode the information given by $[\sigma, \alpha, \phi]$. The normalization completely encodes $\sigma$, and the pullback diagram completely encodes the information given by $\alpha$. 
\end{corollary}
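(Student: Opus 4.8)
The plan is to assemble the three directions of the corollary from the structural results already in hand: the description of the normalization $\cN(\Lambda) \cong \prod_i \overline{k\tilde{\mathbb{A}}_{|\sigma_i|}^{eq}}$ from the Lemma in Section \ref{Basic Invariants}, the pullback presentation of the Surface/Dessin Order from Section \ref{Dessin Orders}, and the explicit periodic projective resolutions of the simple modules from the Theorem in Section \ref{Resolutions}. The key observation is that each of the three permutations $\sigma$, $\alpha$, $\phi$ of a $3$-constellation $C = [\sigma, \alpha, \phi]$ is recorded by a different piece of the order $\Lambda$, and that no two of these pieces are independent, because $\sigma\alpha\phi = \id$.

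First I would treat $\sigma$: the factors $\overline{k\tilde{\mathbb{A}}_{|\sigma_i|}^{eq}}$ of $\cN(\Lambda)$ are indexed by the cycles $\sigma_i$, and the size of the $i$-th factor (equivalently the number of vertices in the cyclic quiver $\tilde{\mathbb{A}}_{|\sigma_i|}^{eq}$) is exactly $|\sigma_i|$; so $\cN(\Lambda)$ up to isomorphism is precisely the cycle type $\lambda_1$ of $\sigma$, and conversely the cycle type determines $\cN(\Lambda)$. Next I would treat $\alpha$: by the Proposition in Section \ref{Dessin Orders} the indecomposable projective $\Lambda$-modules $P_e$ are in bijection with the $2$-cycles $\alpha^{i,j}_{k,l} = (e^i_k, e^j_l)$ of $\alpha$, and by part (1) of the Theorem in Section \ref{Resolutions} the radical $\rad(P_e) = U(\sigma_i^{k+1}) \oplus U(\sigma_j^{l+1})$ exhibits exactly which two diagonal entries of which two hereditary factors $\Lambda_i, \Lambda_j$ are being glued; reading off all such gluings recovers the pullback diagram, hence $\alpha$. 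Finally $\phi$ is recovered in two equivalent ways: combinatorially from $\phi = (\sigma\alpha)^{-1}$ once $\sigma$ and $\alpha$ are known, and intrinsically from the projective resolutions of the simples, since by parts (2)--(4) of that Theorem the indecomposable summands $P(\phi^m \cdot e^i_k)$ appearing along $P^\bullet(\alpha^{i,j}_{k,l})$ trace out precisely the $\phi$-orbit of $e^i_k$, with period $\lcm\{|\cO_\phi(e^i_k)|, |\cO_\phi(e^j_l)|\}$; the disjoint cycles of $\phi$ are thus the anti-cycles of $I$, read off from the syzygies.

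Having established that $(\cN(\Lambda), \text{pullback data}, \text{resolutions})$ determines and is determined by $[\sigma,\alpha,\phi]$ up to the obvious relabelling, the three asserted equivalences follow by chaining these bijections: $3$-constellations with fixed cycle type $[\lambda_1,\lambda_2,\lambda_3]$ $\leftrightarrow$ dessin orders with fixed normalization (matching $\lambda_1$ via $\cN$, and $\lambda_3$ via the periods of the resolutions, with $\lambda_2$ then forced by $\sigma\alpha\phi = \id$) $\leftrightarrow$ surface algebras with fixed $\sigma$- or $\phi$-cycle decomposition (the medial-quiver correspondence of the Theorem in Section \ref{Surface Algebras}). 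I would also spell out the final sentence as a summary of which invariant carries which permutation: $\cN(\Lambda)$ carries $\sigma$, the pullback diagram carries $\alpha$, and the resolutions carry $\phi$ (or redundantly all three).

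The main obstacle I anticipate is making the word ``equivalent'' precise: one must fix the groupoid in which these objects live --- constellations up to simultaneous conjugation in $S_n$ on one side, orders up to isomorphism (respecting the hereditary normalization and its distinguished idempotents) on the other --- and check that the bijection respects morphisms, i.e. that an isomorphism of dessin orders carrying one normalization to another corresponds to a relabelling of the constellation and not to extra ``hidden'' symmetries coming from automorphisms of the pullback rings $R_{i,j}$ that do not descend from the combinatorics. Controlling exactly these automorphisms (the choices of $\pi_i$ noted after the pullback diagram in Section \ref{Dessin Orders}) is where the argument needs care; everything else is bookkeeping across the already-proven structural theorems.
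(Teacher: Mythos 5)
Your proposal is correct and follows essentially the route the paper intends: the paper states this corollary without a separate proof, presenting it as an immediate consequence of the preceding theorem on projective resolutions together with the description of $\cN(\Lambda)$ and the pullback diagram, which is exactly how you assemble it ($\cN(\Lambda)$ carries $\sigma$, the pullback/gluing data carries $\alpha$, the resolutions carry $\phi$). Your closing remark about pinning down the notion of equivalence and the automorphisms of the $R_{i,j}$ is a genuine point of care that the paper itself does not address, so your write-up is if anything more complete than the source.
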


\begin{ex}
	Let $C=[\sigma, \alpha, \phi]$ be given by 
	\[ \sigma = (1,4)(2,3), \quad \alpha = (1,3)(2,4),\quad  \phi = (1,2)(3,4),\] then $\chi(C) = 2$ and $g(C)=0$. The embedded graph can be represented by the equator of the sphere with two vertices on it. Or, if we embed it in the plane:
	\[
	\begin{tikzcd}
	&  & \phi_2=(3,4) \\
	\sigma_2=(2,3) \arrow[rr, "{\alpha_1=(1,3)}", no head, bend left=49] & \phi_1=(1,2) & \sigma_1 = (1,4) \arrow[ll, "{\alpha_2=(2,4)}", no head, bend left=49]
	\end{tikzcd}
	\]
	The quiver which comes from this graph is\\
	\[
	\begin{tikzcd}
	\alpha_1=(1,3) \arrow["a_1", dd, bend left] \arrow["b_1", dd, dashed, bend right=71] \\
	\\
	\alpha_2=(2,4) \arrow["a_2",uu, bend left] \arrow["b_2", uu, dashed, bend right=71]
	\end{tikzcd}
	\]
	The associated matrix data is
	\[ \Lambda = \begin{pmatrix}
	k[[x]] & (x) & 0 & \\
	k[[x]] & k[[x,y]]/(xy) & (y) \\
	0 		& k[[y]] & k[[y]] 
	\end{pmatrix} \]
	With normalization
	\[ 
	\cN(\Lambda) =  \left\lbrace \begin{pmatrix}
	\lambda_{11} & x\cdot \lambda_{12} \\
	\lambda_{21} & \lambda_{22} 
	\end{pmatrix} \times \begin{pmatrix}
	\mu_{11} & x\cdot \mu_{12} \\
	\mu_{21} & \mu_{22} 
	\end{pmatrix}\Bigg| \ \lambda_{ij} \in k[[x]],\ \ \mu_{kl} \in k[[y]] \right\rbrace
	\]
	The projective resolution of the simple $S(\alpha_1)$ has the following form
	\[ \xymatrix{
	\cdots \ar[r] &  \ar[r]^{\begin{pmatrix}
		a_1 & 0 \\
		0 & b_1
		\end{pmatrix}} P(\alpha_2) \oplus P(\alpha_2) & \ar[r]^{\begin{pmatrix}
		a_2 & 0 \\
		0 & b_2
		\end{pmatrix}} P(\alpha_1) \oplus P(\alpha_1)& \ar[r]^{\begin{pmatrix}
		a_1 & 0 \\
		0 & b_1
		\end{pmatrix}} P(\alpha_2) \oplus P(\alpha_2) & P(\alpha_1) \ar[r] & S(\alpha_1)
	}\]
\end{ex}

\section{More Examples of Resolutions of Simple Modules}\label{More Examples}
Let us illustrate once more by a few examples. 

\begin{ex}
We will compute the center and normalization, and describe some of the projective resolutions of simple modules of several dessin orders corresponding to two dessins in the the same orbit of $\cG(\overline{\QQ}/\QQ)$, and two others in a different orbit. 

We first compute the projective resolutions of the simple modules, which give us the information for $\phi$ as well as $\alpha$. Let us use the following shorthand for the indecomposable projective module $P(e)$ corresponding to an edge of $\Gamma$, we identify $e = (e^i_k, e^j_l) = \alpha^{i,j}_{k,l}$. Let $C_1 = [\sigma, \alpha, \phi]$, where

\begin{itemize}
\item $\sigma = (1)(2,3)(4,5)(6,7,8,9)(10)(11)(12,13)(14)$
\item $\alpha = (1,2)(3,4)(5,6)(7,10)(8,11)(9,12)(13,14)$
\end{itemize}

\medskip
\medskip

\begin{tikzcd}
\sigma_8 & \sigma_7 \arrow[l, no head] &  &  &  &  \\
 & \sigma_5 & \sigma_4 \arrow[r, no head] \arrow[lu, no head] \arrow[l, no head] \arrow[ld, no head] & \sigma_3 \arrow[r, no head] & \sigma_2 \arrow[r, no head] & \sigma_1 \\
 & \sigma_6 &  &  &  & 
\end{tikzcd}

\medskip
\medskip

and let $S(e) = S(\alpha^1) = S(1,2)$ be the simple module at the edge $e=(1,2)$. Then the resolution has the following form
\medskip

\begin{center}
\begin{tikzcd}[row sep=18pt, column sep=15pt]
{(1,2)} &  & {(13,14) \quad (13,14)} \arrow[lldddd] &  & {(7,10) \quad (7,10)} \arrow[lldddd] &  & {\fbox{$(1,2) \quad (3,4)$}} \arrow[lldddd] \\
{\fbox{$(1,2) \quad (3,4)$}} \arrow[u] &  & {(13,14) \quad (9,12)} \arrow[u] &  & {(7,10) \quad (5,6)} \arrow[u] &  &  \\
{(3,4) \quad (5,6)} \arrow[u] &  & {(9,12) \quad (8,11)} \arrow[u] &  & {(5,6) \quad (3,4)} \arrow[u] &  &  \arrow[uu, dotted] \\
{(5,6) \quad (9,12)} \arrow[u] &  & {(8,11) \quad (8,11)} \arrow[u] &  & {(3,4) \quad (1,2) } \arrow[u] &  &  \\
{(9,12) \quad (13,14)} \arrow[u] &  & {(8,11) \quad (7,10)} \arrow[u] &  & {(1,2) \quad (1,2)} \arrow[u] &  & 
\end{tikzcd}
\end{center}

\medskip
We also have the following gluing diagram of matrix algebras.
\medskip

\begin{center}
\begin{tikzcd}[row sep=18pt, column sep=15pt]
 &  &  & R_8 &  &  &  &  &  &  \\
 &  & R_7 \arrow[rd, "\sigma_7" description] & \mathfrak{m}_7 &  &  &  &  &  &  \\
 &  & R_7 & R_7 \arrow[uu, no head, bend right] &  &  &  &  &  &  \\
R_5 & R_4 \arrow[rd, "\sigma_4" description] \arrow[l, no head] & \mathfrak{m}_4 & \mathfrak{m}_4 & \mathfrak{m}_4 &  &  &  &  &  \\
 & R_4 & R_4 \arrow[rd, "\sigma_4^2" description] \arrow[uuu, no head, bend left] & \mathfrak{m}_4 & \mathfrak{m}_4 & R_3 \arrow[rd, "\sigma_3" description] & \mathfrak{m}_3 & R_2 \arrow[rd, "\sigma_2" description] & \mathfrak{m}_2 &  \\
 & R_4 & R_4 & R_4 \arrow[rd, "\sigma_4^3" description] \arrow[rru, no head] & \mathfrak{m}_4 & R_3 & R_3 \arrow[ru, no head] & R_2 & R_2 \arrow[r, no head] & R_1 \\
 & R_4 & R_4 & R_4 & R_4 &  &  &  &  &  \\
 &  &  &  & R_6 \arrow[u, no head] &  &  &  &  & 
\end{tikzcd}
\end{center}
\medskip
\medskip
\medskip
\medskip
\medskip

For $C_2 = [\sigma, \alpha, \phi]$, where
\begin{itemize}
\item $\sigma = (1)(2,3)(4,5)(6,7,8,9)(10)(11)(12,13)(14)$
\item $\alpha = (1,2)(3,4)(5,6)(7,12)(8,11)(13,14)(9,10)$
\end{itemize}
we have 
\medskip

\begin{tikzcd}
 & \sigma_6 &  &  &  &  \\
 & \sigma_5 & \sigma_4 \arrow[r, no head] \arrow[lu, no head] \arrow[l, no head] \arrow[ld, no head] & \sigma_3 \arrow[r, no head] & \sigma_2 \arrow[r, no head] & \sigma_1 \\
\sigma_8 & \sigma_7 \arrow[l, no head] &  &  &  & 
\end{tikzcd} 
\medskip

Now, the resolution of the simple module $S(1,2)$ has the following form, 

\medskip

\begin{center}
\begin{tikzcd}[row sep=18pt, column sep=15pt]
{(1,2)} &  & {(7,10) \quad (8,11)} \arrow[lldddd] &  & {(13,14) \quad (9,12)} \arrow[lldddd] &  & {\fbox{$(1,2) \quad (3,4)$}} \arrow[lldddd] \\
{\fbox{$(1,2) \quad (3,4)$}} \arrow[u] &  & {(8,11) \quad (8,11)} \arrow[u] &  & {(9,12) \quad (5,6)} \arrow[u] &  &  \\
{(3,4) \quad (5,6)} \arrow[u] &  & {(8,11) \quad (9,12)} \arrow[u] &  & {(5,6) \quad (3,4)} \arrow[u] &  &  \arrow[uu, dotted] \\
{(5,6) \quad (7,10)} \arrow[u] &  & {(9,12) \quad (13,14)} \arrow[u] &  & {(3,4) \quad (1,2) } \arrow[u] &  &  \\
{(7,10) \quad (7,10)} \arrow[u] &  & {(13,14) \quad (13,14)} \arrow[u] &  & {(1,2) \quad (1,2)} \arrow[u] &  & 
\end{tikzcd}
\end{center}

\medskip
The corresponding gluing diagram of matrix algebras is:
\medskip

\begin{center}
\begin{tikzcd}[row sep=18pt, column sep=15pt]
 & R_6 &  &  &  &  &  &  &  &  \\
 & R_4 \arrow[rd, "\sigma_4" description] \arrow[u, no head] & \mathfrak{m}_4 & \mathfrak{m}_4 & \mathfrak{m}_4 &  &  &  &  &  \\
R_5 & R_4 & R_4 \arrow[rd, "\sigma_4" description] \arrow[ll, no head, bend left] & \mathfrak{m}_4 & \mathfrak{m}_4 & R_3 \arrow[rd, "\sigma_2" description] & \mathfrak{m}_3 & R_2 \arrow[rd, "\sigma_2" description] & \mathfrak{m}_2 &  \\
 & R_4 & R_4 & R_4 \arrow[rd, "\sigma_4" description] \arrow[rru, no head] & \mathfrak{m}_4 & R_3 & R_3 \arrow[ru, no head] & R_2 & R_2 \arrow[r, no head] & R_1 \\
 & R_4 & R_4 & R_4 & R_4 &  &  &  &  &  \\
 &  &  &  & R_7 \arrow[u, no head] \arrow[rd, "\sigma_7" description] & \mathfrak{m}_7 &  &  &  &  \\
 &  &  & R_8 & R_7 & R_7 \arrow[ll, no head, bend left] &  &  &  & 
\end{tikzcd}
\end{center}

 Let $C_3$ correspond to, \\
\begin{tikzcd}
\sigma_8 &  &  &  &  &  \\
\sigma_5 & \sigma_4 \arrow[r, no head] \arrow[lu, no head] \arrow[l, no head] \arrow[ld, no head] & \sigma_7 \arrow[r, no head] & \sigma_3 \arrow[r, no head] & \sigma_2 \arrow[r, no head] & \sigma_1 \\
\sigma_6 &  &  &  &  & 
\end{tikzcd}
\medskip

 So, $C_3 = [\sigma, \alpha, \phi]$, where
\begin{itemize}
\item $\sigma = (1)(2,3)(4,5)(6,7,8,9)(10)(11)(12,13)(14)$
\item $\alpha = (1,2)(3,4)(5,12)(7,10)(8,11)(9,14)(6,13)$
\end{itemize}
The resolution of $S(1,2)$ is, 

\medskip

\begin{center}
\begin{tikzcd}[row sep=18pt, column sep=15pt]
{(1,2)} &  & {(9,14) \quad (9,14)} \arrow[lldddd] &  & {(7,10) \quad (6,13)} \arrow[lldddd] &  & {\fbox{$(1,2) \quad (3,4)$}} \arrow[lldddd] \\
{\fbox{$(1,2) \quad (3,4)$}} \arrow[u] &  & {(9,14) \quad (8,11)} \arrow[u] &  & {(6,13) \quad (5,12)} \arrow[u] &  &  \\
{(3,4) \quad (5,12)} \arrow[u] &  & {(8,11) \quad (8,11)} \arrow[u] &  & {(5,12) \quad (3,4)} \arrow[u] &  &  \arrow[uu, dotted] \\
{(5,12) \quad (6,13)} \arrow[u] &  & {(8,11) \quad (7,10)} \arrow[u] &  & {(3,4) \quad (1,2) } \arrow[u] &  &  \\
{(6,13) \quad (9,14)} \arrow[u] &  & {(7,10) \quad (7,10)} \arrow[u] &  & {(1,2) \quad (1,2)} \arrow[u] &  & 
\end{tikzcd}
\end{center}

\medskip

\begin{center}
\begin{tikzcd}[row sep=18pt, column sep=15pt]
 & R_8 &  &  &  &  &  &  &  &  &  &  \\
 & R_4 \arrow[rd, "\sigma_4" description] \arrow[u, no head] & \mathfrak{m}_4 & \mathfrak{m}_4 & \mathfrak{m}_4 &  &  &  &  &  &  &  \\
R_5 & R_4 & R_4 \arrow[rd, "\sigma_4" description] \arrow[ll, no head, bend left] & \mathfrak{m}_4 & \mathfrak{m}_4 & R_7 \arrow[rd, "\sigma_7" description] & \mathfrak{m}_7 & R_3 \arrow[rd, "\sigma_3" description] & \mathfrak{m}_3 & R_2 \arrow[rd, "\sigma_2" description] & \mathfrak{m}_2 &  \\
 & R_4 & R_4 & R_4 \arrow[rd, "\sigma_4" description] \arrow[dd, no head, bend right] & \mathfrak{m}_4 & R_7 & R_7 \arrow[ru, no head] & R_3 & R_3 \arrow[ru, no head] & R_2 & R_2 \arrow[r, no head] & R_1 \\
 & R_4 & R_4 & R_4 & R_4 \arrow[ruu, no head] &  &  &  &  &  &  &  \\
 &  &  & R_6 &  &  &  &  &  &  &  & 
\end{tikzcd}
\end{center}
\medskip
\medskip
\medskip
\medskip

Now let $C_4$ correspond to, 

\medskip

\begin{tikzcd}
 &  & \sigma_5 \arrow[d, no head] &  &  &  \\
\sigma_8 & \sigma_7 \arrow[r, no head] \arrow[l, no head] & \sigma_4 \arrow[r, no head] & \sigma_3 \arrow[r, no head] & \sigma_2\arrow[r, no head] & \sigma_1 \\
 &  & \sigma_6 \arrow[u, no head] &  &  & 
\end{tikzcd}

\medskip
\medskip

So, $C_4 = [\sigma, \alpha, \phi]$, where
\begin{itemize}
\item $\sigma = (1)(2,3)(4,5)(6,7,8,9)(10)(11)(12,13)(14)$
\item $\alpha = (1,2)(3,4)(5,6)(7,10)(8,12)(9,11)(13,14)$
\end{itemize}
Then we have the resolution of $S(1,2)$, 

\medskip

\medskip

\begin{center}
\begin{tikzcd}[row sep=18pt, column sep=15pt]
{(1,2)} &  & {(9,11) \quad (8,12)} \arrow[lldddd] &  & {(7,10) \quad (7,10)} \arrow[lldddd] &  & {\fbox{$(1,2) \quad (3,4)$}} \arrow[lldddd] \\
{\fbox{$(1,2) \quad (3,4)$}} \arrow[u] &  & {(8,12) \quad (13,14)} \arrow[u] &  & {(7,10) \quad (5,6)} \arrow[u] &  &  \\
{(3,4) \quad (5,6)} \arrow[u] &  & {(13,14) \quad (13,14)} \arrow[u] &  & {(5,6) \quad (3,4)} \arrow[u] &  &  \arrow[uu, dotted] \\
{(5,6) \quad (9,11)} \arrow[u] &  & {(13,14) \quad (8,12)} \arrow[u] &  & {(3,4) \quad (1,2) } \arrow[u] &  &  \\
{(9,11) \quad (9,11)} \arrow[u] &  & {(8,12) \quad (7,10)} \arrow[u] &  & {(1,2) \quad (1,2)} \arrow[u] &  & 
\end{tikzcd}
\end{center}

\medskip 
%column sep=1in,row sep=1in
\begin{center}
\begin{tikzcd}[row sep=18pt, column sep=15pt]
 &  &  & R_5 &  &  &  &  &  &  &  &  \\
R_8 \arrow[r, no head] & R_7 & \mathfrak{m}_7 & R_4 \arrow[rd, "\sigma_4" description] \arrow[u, no head] & \mathfrak{m}_4 & \mathfrak{m}_4 & \mathfrak{m}_4 &  &  &  &  &  \\
 & R_7 & R_7 & R_4 & R_4 \arrow[rd, "\sigma_4" description] \arrow[ll, no head, bend left] & \mathfrak{m}_4 & \mathfrak{m}_4 & R_3 \arrow[rd, "\sigma_3" description] & \mathfrak{m}_3 & R_2 \arrow[rd, "\sigma_2" description] & \mathfrak{m}_2 &  \\
 &  &  & R_4 & R_4 & R_4 \arrow[rd, "\sigma_4" description] \arrow[dd, no head, bend right] & \mathfrak{m}_4 & R_3 & R_3 \arrow[ru, no head] & R_2 & R_2 \arrow[r, no head] & R_1 \\
 &  &  & R_4 & R_4 & R_4 & R_4 \arrow[ruu, no head] &  &  &  &  &  \\
 &  &  &  &  & R_6 &  &  &  &  &  & 
\end{tikzcd}
\end{center}
\medskip

In all four cases, we have $\cZ(\Lambda) \cong A_0 \cdot k[[z_1,...z_8]]/(z_iz_j)_{i \neq j}$, and 
\[
\cN(\Lambda) \cong \prod_{n=1}^4\left(\overline{k\tilde{\mathbb{A}}_{1}^{eq}}\right) \times \prod_{n=1}^3 \left(\overline{k\tilde{\mathbb{A}}_{2}^{eq}}\right) \times \left(\overline{k\tilde{\mathbb{A}}_{4}^{eq}}\right)
\]

However, it is known that $C_1$ and $C_2$ lie in a quadratic Galois orbit, as do $C_3$ and $C_4$. In other words 
the center and normalization are necessarily isomorphic if two dessins lie in the same orbit, but an 
isomorphism does not imply they are in the same orbit. This is where the pull-back diagrams and the projective 
resolutions come in handy. In particular, Let $A(n) = \overline{k\tilde{\mathbb{A}}_{n}^{eq}}$ be the completion of 
the hereditary algebra. We mentioned in the definition of the pull-back diagrams defining 
dessin order that different choices of $\pi_p$ and $\pi_q$ may lead to nonisomorphic orders. This is where the information given by $\alpha$ lies, in the choices of $\pi_p$ and $\pi_q$ 
\[ 
\xymatrix{
R_{p,q} \ar[r]^{\tilde{\pi}_p}  \ar[d]_{\tilde{\pi}_q} & R_p \ar[d]^{\pi_p} \\
R_q \ar[r]_{\pi_q} & k 
}
\]
which is in general different and non-isomorphic for different choices of $\pi_p$ and $\pi_q$. In the case of completions of path algebras for the above four dessins, the diagrams are of the form
\[
\xymatrix{
\Lambda \ar[rr]^{\pi_2} \ar[d]_{\pi_1} & & \Lambda/\rad(\Lambda)= k^7  \ar[d]^{\pi_q} \\
\cN(\Lambda) \ar[rr]_{\pi_p} & & \cN(\Lambda)/\rad(\cN(\Lambda)) = k^{14}
}
\] 
for all four dessins. 
\end{ex}
\medskip

\begin{ex}
Let us return to the constellation $\mathfrak{c}_4 = [\sigma, \alpha, \varphi]_4$, given by
\[ \sigma = (1,8,3,6)(2,5,4,7), \ \ \alpha = (1,5)(2,6)(3,7)(4,8), \ \ \varphi = (1,2,3,4)(5,6,7,8) \]
\[ \chi(\mathfrak{c}_4)=2-4+2, \quad g(\mathfrak{c}_4) = 1. \]

The symmetry of this dessin is reflected in the projective resolutions. In particular, they are all infinite periodic of period $4$. Let us choose the following labeling, 

\[
\xymatrix{
	1 \ar@/^1pc/[r] \ar@{.>}[r] &   2 \ar@/^1pc/[d] \ar@{.>}[d] \\
	4 \ar@/^1pc/[u] \ar@{.>}[u] &   3 \ar@/^1pc/[l] \ar@{.>}[l]
}
\]
\medskip

Let $S(1,5) = S(1)$ be the simple module corresponding to vertex $1$ in the quiver with relations. Then the projective resolution has the following form
\\
\[
S(1) \leftarrow P(1) \leftarrow \fbox{$P(2) \oplus P(2)$} \leftarrow P(3) \oplus P(3) \leftarrow P(4) \oplus P(4) \leftarrow P(1) \oplus P(1) \leftarrow \fbox{$P(2) \oplus P(2)$} \cdots 
\]
\\
The dessin can be embedded on the torus as follows, \\

\medskip

\begin{center}
\includegraphics[
    page=1,
    width=200pt,
    height=200pt,
    keepaspectratio
]{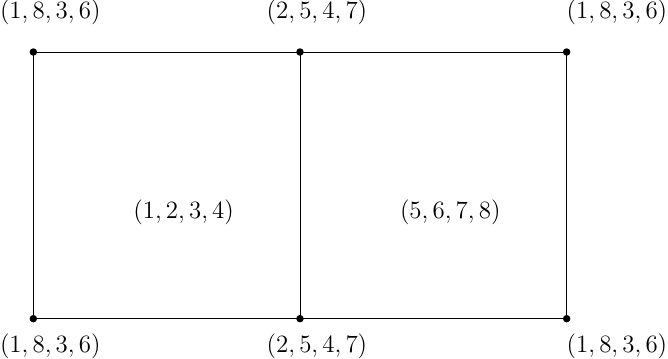}
\end{center}

\medskip

In our shorthand given in terms of $\alpha$ from previous examples, the resolutions looks like, 
\[
S(1,5) \leftarrow (1,5) \leftarrow \fbox{$(2,6) \oplus (2,6)$} \leftarrow  (3,7) \oplus (3,7) \leftarrow (4,8) \oplus  (4,8) \leftarrow (1,5) \oplus (1,5) \leftarrow \fbox{$(2,6) \oplus (2,6)$} \cdots
\]
The pull-back diagram for the dessin order is, 
\[
\xymatrix{
\Lambda \ar[rr] \ar[d] & & \Lambda / \rad(\Lambda)=k^4  \ar[d] \\
    \cN(\Lambda)           \ar[rr] & & \cN(\Lambda)/ \rad(\cN(\Lambda)) = k^8
}
\]
where $\cN(\Lambda) = \left(\overline{k\tilde{\mathbb{A}}_{4}^{eq}}\right) \times \left(\overline{k\tilde{\mathbb{A}}_{4}^{eq}}\right)$. 
\end{ex}

\section{Recollections on Number Fields, $\frp$-adic Completions, and Adeles}

Let $\QQ_p$ be the $p$-\textbf{adic numbers} for some prime $p \in \ZZ$, i.e. all sequences of the form
\[ \sum_{k \in \ZZ} a_k p^k \]
where such sums are bounded below, i.e. there is some $m \in \ZZ$ such that $a_k = 0$ for all $k<m$. The coefficients $a_k$ are from the finite field $\FF_p = \ZZ/p\ZZ$. Let $\ZZ_p$ be the $p$-\textbf{adic integers}. This is a subring of the field $\QQ_p$. Note, $\QQ_p$ is the field of fractions of $\ZZ_p$. In the $p$-\textbf{adic norm} $N_p$, we have 
\[ N_p\left(\sum_{k \in \ZZ} a_k p^k \right) = p^{-m} \]
where $m = \min\{k \in \ZZ: \ a_k \neq 0\}$. Further, 
\[  \ZZ_p = \varprojlim \ZZ/p^k \ZZ \]
Recall, any completion of $\QQ$ is of the form $\QQ_p$, or $\RR$. We then have the \textbf{adeles} (of $\QQ$),
\[ \bA_{\QQ} \cong \left( \widehat{\ZZ} \otimes_{\ZZ} \QQ\right) \times \QQ_{\infty} = \left( \widehat{\ZZ} \otimes_{\ZZ} \QQ\right) \times \RR \]
where $\widehat{\ZZ} = \prod_{p \ \text{prime}} \ZZ_p$ is the \textbf{profinite completion} of the integers, and we have elements of the form $((f_p), f_{\infty}) \in \bA_{\QQ}$, with $(f_p)$ a sequence with finitely many $f_p \notin \ZZ_p$, and $f_{\infty} \in \RR$. We think of $\RR$ as the completion of $\QQ$ with respect to the usual norm which will be denoted by $N_{\infty}$. Now, give $\QQ$ the discrete topology, $\RR$ its usual topology with respect to $N_{\infty}$, and $\widehat{\ZZ}$ the direct product topology. This induces a topology on $\bA_{\QQ}$. There is a diagonal embedding $\QQ \hookrightarrow \bA_{\QQ}$ and 
\[ \QQ \backslash \bA_{\QQ} \cong  \widehat{\ZZ} \times \left(\RR/ \ZZ \right) \]
Note, if we define the ring of \textbf{integral adeles} to be the product
\[ \bA_{\ZZ} = \widehat{\ZZ} \times \RR \]
then we may define the (full) ring of adeles to be
\[ \bA_{\QQ} = \bA_{\ZZ} \otimes_{\ZZ} \QQ.\]

Suppose $K/\QQ$ is a finite field extension and $\sO_K$ its ring of integers. This means $\sO_K$ are elements $\alpha \in K$ such that $f(\alpha) = 0$ for some monic polynomial $f \in \ZZ[x]$. If $K/\QQ$ is Galois, then $K$ is the splitting field of some monic polynomial $f \in \QQ[x]$, and we denote the \textbf{Galois group} by $\cG(K/\QQ)$. 

Any ideal of $\sO_K$ factors uniquely into prime ideals so that that for every prime $p \in \ZZ$ we have
\[ p \sO_K = \prod_{i=1}^r \frp_i \]
If the $\frp_i$ are distinct then the extension $K/\QQ$ is said to be "\textbf{unramified at} $p$", or sometimes "$p$ is \textbf{unramified}". If the primes are not distinct we may write
\[ p \sO_K = \frp_1^{a_1} \frp_2^{a_2} \cdots \frp_r^{a_r} \]
where each $a_i \in \ZZ_{\geq 1}$. If any of the $a_i > 1$, we say $p \in \ZZ$ is \textbf{ramified}, or that "the field extension $K/\QQ$ is \textbf{ramified over} $p$". We call the vector $\mathbf{a}:= (a_1, a_2, ..., a_r) \in \ZZ^r_{\geq 1}$ the \textbf{ramification type} at $p$. To every such $\mathbf{a} \in \ZZ^r_{\geq 1}$ we may order the factors $\frp_i^{a_i}$ so that $a_1 \geq a_2 \geq \cdots \geq a_r$, and we may then associate a \textbf{partition} or a \textbf{Young diagram} to $p$. For the terminology on Young diagrams the texts \cite{F} and \cite{FH} are recommended. 

For a number field $K$, we define the \textbf{adeles} of $K$, denoted $\bA_K$ as follows. First, let 
$\sO_K$ be the completion of the ring of integers in $K$. Let $\mathscr{P}$ be the set of places, i.e. the set of equivalence classes of norms on $\sO_K$. Let $P \subset \mathscr{P}$ be the set of archimedean places. We can identify $N \in \mathscr{P} \backslash P$ with a prime ideal $\frp$ of $\sO_K$, in which case we can define the completion as the product of local rings,
\[ \sO_K = \prod_{\frp \in \mathscr{P} \backslash P} \sO_{\frp}.\]
Let $K_{N}$ for $N \in \mathscr{P}$ be the formal completion of $K$ at the norm $N$. For any $N \in P$ we have $K_N \cong \RR$ or $\CC$. Otherwise $K_N$ is a \textbf{local field} (a generalization of $\QQ_p$) and $\sO_{K,N}$ is open in $K_N$ and compact with all elements $\alpha \in \sO_{K,N}$ satisfying $N(\alpha) \leq 1$. 
Similar to the case where $\QQ$ is the field of fractions of $\ZZ$, i.e. the localization at all nonzero $n \in \ZZ$, we may take $K$ to be the localization of $\sO_K$ at all nonzero elements $x \in \sO_K$. Furthermore, similar to the case of $\QQ_p$ and $\ZZ_p$, we define the integral adeles
\[ \bA_{\sO_K} = \sO_K \times \prod_{N \in P} K_N \]
and the (full) ring of adeles over $K$ is then
\[ \bA_K = \bA_{\sO_K} \otimes_{\sO_K} K.\]

\section{Important Cases of Surface Orders}

\subsection{Algebraic Curves}

\begin{defn}
An \textbf{algebraic curve} $X$ for us will be an irreducible, complete, and nonsingular algebraic variety of dimension one. We will let $k(X)$ denote its field of rational functions, which will be an extension of $k$, of transcendence degree one. Suppose we are given some field extension $K/k$ with generators $\{x_1, ..., x_n\}$, and let $R = k[x_1, ..., x_n] \subset K$ be the associated affine subalgebra of $K$, which corresponds to some some (closed) affine variety $V(k)$ in the affine space $\mathbb{A}_k^n$ over $k$. Let $\overline{V(k)}$ be the closure in $\PP_k^n$. Then $\overline{V(k)}$ is a complete irreducible curve with field of rational functions $K$. If we let $X$ be the noramlization of $\overline{V(k)}$, then $X$ is a complete and nonsingular algebraic curve. 
\end{defn}

In what follows we will look at several very important examples of algebraic curves defined over various fields, and some surface orders that may be associated to them.

\subsection{Algebraic Curves and Loop Algebras $\fg((x))$}
In this section we refer readers to \cite{Fr1} and \cite{Fr2}, take $\FF$ to be the field of fractions of the commutative ring $R$, which is a \textit{complete discrete valuation ring}, and is the ring of integers in $\FF$, and let $\mathfrak{m}$ denote the unique maximal ideal, and $k = R/\mathfrak{m}$ the residue field. Concrete examples of the setup which are important to the theory are
\[
\begin{matrix}
\FF = \CC((x)) & R = \CC[[x]] & \fm = (x) & k = \CC = R/\fm \\
\FF = \hat{\QQ}_{(p)} & R = \hat{\ZZ}_{(p)} & \fm = (p) & k = \ZZ/(p) = \FF_p \\
\FF = \FF_p((x)) & R = \FF_p[[x]] & \fm = (x) & k = \FF_p
\end{matrix} \]

So, for example, let $\FF = \CC((x))$ be the field of formal Laurent series, and $R = \CC[[x]]$ the completion of the polynomial ring $\CC[x]$ (i.e. formal Taylor series) with respect to $\fm = (x)$. Since we work with \emph{formal} power series and \emph{formal} Laurent series, we are allowing infinite sums, and evaluation at $x=0$ in general is the only evaluation allowed. Moreover, define the \textbf{formal punctured disk} to be
\[ D^{\times} = \Spec \CC((x)) \]
and the \textbf{formal disk} to be 
\[ D = \Spec \CC[[x]] \]

Then an order associated to a vertex of a graph cellularly embedded in a Riemann surface, corresponding to a ramification point $x_i$, of degree $n_i$, would be of the form
\[
\Lambda_i = \begin{pmatrix}
\CC[[x_i]] & (x_i) & \cdots & (x_i) \\
\CC[[x_i]] & \CC[[x_i]] & \cdots & (x_i) \\
\vdots & \vdots & \ddots & \vdots \\
\CC[[x_i]] & \CC[[x_i]] & \cdots & \CC[[x_i]]
\end{pmatrix} \ \ \subset \ \ \begin{pmatrix}
\CC[[x_i]] & \CC[[x_i]] & \cdots & \CC[[x_i]] \\
\CC[[x_i]] & \CC[[x_i]] & \cdots & \CC[[x_i]] \\
\vdots & \vdots & \ddots & \vdots \\
\CC[[x_i]] & \CC[[x_i]] & \cdots & \CC[[x_i]]
\end{pmatrix} \ \ \subset \ \
\begin{pmatrix}
\CC((x_i)) & \CC((x_i)) & \cdots & \CC((x_i)) \\
\CC((x_i)) & \CC((x_i)) & \cdots & \CC((x_i)) \\
\vdots & \vdots & \ddots & \vdots \\
\CC((x_i)) & \CC((x_i)) & \cdots & \CC((x_i)) \\
\end{pmatrix}
\]

This is closely related with the theory of reductive algebraic groups over Nonarchimedean fields.\footnote{See for example \cite{Tits}, and the notion of Iwahori subgroups as generalizations of Borel subgroups.}

%\[ \fb_{-} \otimes_{\CC} \CC[[x_i]] \subset \gl_{n_i} \otimes_{\CC} \CC[[x_i]] \subset \gl_{n_i} \otimes_{\CC} \CC((x_i)) \]

Now, we have the following diagram
\[
\xymatrix{
\FF = \CC((x_i)) & \\
R = \CC[[x_i]] \ar[u]_{\subseteq} \ar[r]_{\pi}^{f(x_i) \mapsto f(0)} & \CC = \CC[[x_i]]/(x_i)
}\]
which gives the diagram
\[
\xymatrix{
G = \GL_n(\FF)  & \\
K = \GL_n(R) \ar[u]_{\subset} \ar[r]^{\pi}_{f(x) \mapsto f(0)} & \GL_n(\CC) \\
B = \pi^{-1}(B(\CC)) \ar[u]_{\subset} \ar[r]^{\pi}_{f(x) \mapsto f(0)} & B(\CC) \ar[u]_{\subset} 
}\]
where $B_{-}(\CC)$ is the \textbf{Borel subgroup} of \emph{lower triangular matrices} in $\GL_n(\CC)$. The subgroup $B$ is generally called the \textbf{Iwahori subgroup} of $G$. We then have that the \textbf{standard flag variety} can be realized as
\[ \GL_n(\CC)/B(\CC) \]
and the (standard) \textbf{affine flag variety} can be realized as
\[ G/B = \GL_n(\FF)/B \]

\subsection{Algebraic Curves Defined Over $\FF_p((x))$}
Let $X$ be a smooth projective curve over $\FF_p = \ZZ/p\ZZ$ for some prime $p$. Let $\FF = \FF_p(X)$ be the field of rational functions on $X$. Let $x \in X$ be a closed point with maximal ideal $\fm = \fm_x$. Let $\FF_x$ denote the completion of $\FF$ with respect to $\fm$, and let $\sO_x$ be the ring of integers in $\FF_x$. Choose a local coordinate $z_x$ at $x$ given by some rational function vanishing at $x$ with zero of order one. Then we have
\[ \FF_x \cong k((z_x)), \quad \text{and} \ \sO_x \cong k[[z_x]] \]
where $k$ is the residue field of $x$. Let us abuse notation a little and write $k[[z_x]]=k[[x]]$ and $k((z_x)) = k((x))$. Define the two $\sO_x$ orders 
\[ \fb[[x]]_{+} \subset \fg[[x]] \subset \fg((x)) \]
inside the $\FF_x$-matrix algebra $\fg((x)) = \Mat_{n \times n}(\FF_x)$ by, 
\[
\begin{pmatrix}
\sO_x & \fm & \cdots & \fm \\
\sO_x & \sO_x & \cdots & \fm \\
\vdots & \vdots & \ddots & \vdots \\
\sO_x & \sO_x & \cdots & \sO_x
\end{pmatrix}
\subset
\begin{pmatrix}
\sO_x & \sO_x & \cdots & \sO_x \\
\sO_x & \sO_x & \cdots & \sO_x \\
\vdots & \vdots & \ddots & \vdots \\
\sO_x & \sO_x & \cdots & \sO_x
\end{pmatrix}
\subset
\begin{pmatrix}
\FF_x & \FF_x & \cdots & \FF_x \\
\FF_x & \FF_x & \cdots & \FF_x \\
\vdots & \vdots & \ddots & \vdots \\
\FF_x & \FF_x & \cdots & \FF_x
\end{pmatrix}
\]
Suppose that $n = \deg(x)$ is the degree of $x$, i.e. the (finite) number of elements in the field extension $k/\FF_p$. Let $\cG_x = \cG(k/\FF_p)$ be the Galois group of the field extension. 

Let $G((x)) = \GL_n(\FF_p((x)))$ and let $\fg((x)) = \gl_n((x)) = \Mat_{n \times n}(\FF_p((x)))$ be its Lie algebra. Let 
\[ K_N = \{g \in G((x)): \ g \equiv 1 \ \mod \ x^N \} \]
These are often called \textbf{congruence subgroups} and define a base of open neighborhoods of $\id_n \in \GL_n(\FF((x)))$ giving the topology for $\GL_n(\FF((x)))$. Now, notice, the algebra $\fb[[x]]_{+}$ is a \emph{hereditary order}, which can be associated to a point $x \in X$. Suppose we have a finite list of points $\{x_1, ..., x_m\} \subset X$ such that $\deg(x_i) = n_i \geq 1$. Suppose further we have a constellation $C = [\sigma, \alpha, \phi]$ with respect to the set $\{x_1, ..., x_m\}$. This naturally defines a cellularly embedded graph, and we may take $\fb[[x_i]]_{+}$ to be the hereditary orders associated to the vertices of the cellularly embedded graph $\Gamma$. Combinatorially, this is sufficient information to define a surface order over $X$. The gluing will be given by $\alpha$ as usual, and the cyclic order around each vertex will be given by each cycle $\{\sigma_1, ..., \sigma_m\}$ and the order of each $\sigma_i$ will be $\deg(x_i)$. 

\subsection{Smooth Projective Curves Defined Over Arbitrary Fields $k$}
Suppose $X$ is a smooth projective curve defined over any field $k$. Let $F = k(X)$ be the function field of $X$. We may define the ring of adeles for $F$, and we may define a surface order as follows. Let $\cG = \cG(\overline{F}/F)$ be the Galois group, which may be interpreted as the group of deck transformations of the maximal ramified cover of $X$. Let $x \in X$ be a closed point of $X$ with maximal ideal $\fm = \fm_x$. Let $k_x$ be the residue field at $x$. Let $\cO_X$ be the structure sheaf of $X$ so that for open $U \subset X$, $\cO_X(U)$ is the sheaf of rings over $U$, and $\cO_X(U)_x = \cO_{X,x}$ is the local ring with respect to the maximal ideal $\fm_x$ corresponding to $x$. Let $\widehat{\cO}_{X,x}$ be the completion of the local ring, and let $K_x$ be the local field, i.e. the field of fractions of $\widehat{\cO}_{X,x}$. For a finite number of closed points $\{x_1, ..., x_n\} \subset X$, we may associate a \textbf{local surface order} $\Lambda_i$ defined as, 
\[ \Lambda_i = \begin{pmatrix}
\cO_X(U)_{x_i} & \fm_{x_i} & \cdots & \fm_{x_i} \\
\cO_X(U)_{x_i} & \cO_X(U)_{x_i} & \cdots & \fm_{x_i} \\
\vdots & \vdots & \ddots & \vdots  \\
\cO_X(U)_{x_i} & \cO_X(U)_{x_i} & \cdots & \cO_X(U)_{x_i}
\end{pmatrix} \ \subset \ \Mat_{n_i \times n_i}(\cO_X(U)_{x_i}) \ \subset \ \Mat_{n_i \times n_i}(K_x) .\]

For any affine ring $A = k[x_1, ..., x_n]$ over a field of characteristic $p>0$, we may define the \textbf{Frobenius map} which replaces coefficients of $f \in A$ by $a \mapsto a^p$. Then for any (closed) affine scheme $X$ over $\mathbb{A}_k$ given by polynomials $f_1, f_2, ...$ we may define a map
\[ X(k) \to X^{(p)}(k) \]
given by $(a_1, ..., a_n) \mapsto (a_1^p, ..., a_n^p) \in \mathbb{A}_k^n$. This can be realized as a map of affine $k$-schemes. Let $A = \cO_X(X) = k[x_1, ..., x_n]/(f_1, f_2, ...)$, and define
\[ A^{(p)} = \cO_X(X^{(p)}) = k \otimes_{k,\sigma} A = k[x_1, ..., x_n]/(f_1^p, f_2^p, ...).\]
Then we let $F_X:X \to X^{(p)}$ be given by $\Spec$ of the following $k$-algebra morphism
\[ c \otimes a \mapsto ca^p: \ \ k \otimes_{k,\sigma} A \to A.\]
This may be extended to arbitrary schemes in the usual way by applying the definition to affine covers. For a field $k$ of characteristic $p>0$, and for each section of the structure sheaf $\cO_X(U_i)$, where $U_i$ is an open neighborhood of $x_i$ not containing the other ramification points $x_j \neq x_i$ (or more generally for any section $\cO_X(U)$), we may define the \textbf{absolute Frobenius morphism}
\[ \sigma_X: X \to X \]
which acts as the identity on the underlying set $|X|$ of $X$, and which sends 
\[ f \mapsto f^{(p)}: \ \ \cO_X(U) \mapsto \cO_X(U).\]
This may be represented by the matrix
\[
\sigma_i = \begin{pmatrix}
0 & 0 & 0 & \cdots & 0 & (x_i) \\
1 & 0 & 0 & \cdots & 0 & 0 \\
0 & 1 & 0 & \cdots & 0 & 0 \\
\vdots & \vdots & \vdots & \ddots & \vdots & \vdots \\
0 & 0 & 0 & \cdots & 0 & 0 \\
0 & 0 & 0 & \cdots & 1 & 0 \\
\end{pmatrix}
\]
acting on the local order (over $\cO_{X,x}$). For more on this morphism see \cite{M1}. 

We may define a \textbf{relative Frobenius morphism} $F_X: X \mapsto X^{(p)}$ as in \cite{M1} by the diagram

\begin{center}
\medskip
\begin{tikzcd}
X \arrow[rd, "F_X", dashed] \arrow[rrrd, "\sigma_X", bend left] \arrow[rdd, bend right] &  &  &  \\
 & X^{(p)} \arrow[rr] \arrow[d] &  & X \arrow[d] \\
 & \mathbf{Spm}(k) \arrow[rr, "\sigma_{\mathbf{Spm}(k)}"'] &  & \mathbf{Spm}(k)
\end{tikzcd}
\medskip
\end{center}
Moreover, we may define $F^n_X$ simply by iterating
\medskip
\begin{center}
\begin{tikzcd}
X \arrow[r, "F"] & X^{(p)} \arrow[r, "F"] & \cdots \arrow[r, "F"] & X^{(p^n)}
\end{tikzcd}
\end{center}
\medskip
which amounts to replacing the maximal ideal $\fm$ with $\fm^n$ in the matrix $\sigma_i$ in the local orders (or vertex orders for an embedded graph). 
If we take the completions denoted by $\widehat{\sO}_i = \widehat{\cO_X}(U)_{x_i}$, with local fields $K_i = K_{x_i}$, and maximal ideals $\fm_i = \fm_{x_i}$, we may define the orders
\[
\begin{pmatrix}
\widehat{\sO}_i & \fm_i & \cdots & \fm_i \\
\widehat{\sO}_i & \widehat{\sO}_i & \cdots & \fm_i \\
\vdots & \vdots & \ddots & \vdots \\
\widehat{\sO}_i & \widehat{\sO}_i & \cdots & \widehat{\sO}_i \\
\end{pmatrix} \ \subset \ \Mat_{n_i \times n_i}(\widehat{\sO}_i) \ \subset \ \Mat_{n_i \times n_i}(K_i).
\]
In this case we may again define a \textit{Frobenius morphism} given by the same matrix $\sigma_i$. The Frobenius morphism, as described in \cite{M1} (\S 2.d) is \emph{functorial, compatible with products, and commutes with base change}. 
\subsection{Nonabelian Class Field Theory and Number Fields}
Suppose $K/\QQ$ is a number field given by a finite extension of $\QQ$ ramified over $p$. Let $\sO_K$ be the ring of integers, $\mathscr{P}$ the set of places, and $P \subset \mathscr{P}$ the archimedean places. Let $K_{\frp}$ be the formal completion at the place $\frp \in \mathscr{P} \backslash P$, and $K_x$ be the completion for archimedean places $x \in P$. Let $\frp \notin P$ so that the local field $K_{\frp}$ has corresponding local ring $\sO_{\frp}$ for some prime $\frp$. Let $\bA_{\sO_K}$ be the integral adeles over $K$ and let $\bA_K$ be the (full) ring adeles. For the unramified $p \in \ZZ$, let $p \sO_K = \frp_1^{a_1} \frp_2^{a_2} \cdots \frp_r^{a_r}$, $a_i = 1$. Define
\[ \Lambda_i = \begin{pmatrix}
\sO_{\frp_i} & \frp_i & \cdots & \frp_i \\
\sO_{\frp_i} & \sO_{\frp_i} & \cdots & \frp_i \\
\vdots & \vdots & \ddots & \vdots \\
\sO_{\frp_i} & \sO_{\frp_i} & \cdots & \sO_{\frp_i}
\end{pmatrix} \ \subset \ \Mat_{a_i \times a_i}(\sO_{\frp_i}) \ \subset \ \Mat_{a_i \times a_i}(K_{\frp_i}).\]
Now, the Galois group $\cG(K/\QQ)$ permutes the factors of $p \sO_K$. Define the usual automorphism $\sigma_i$ by 
\[ \sigma_i = \begin{pmatrix}
0 & 0 & 0 & \cdots & 0 & \frp_i \\
1 & 0 & 0 & \cdots & 0 & 0 \\
0 & 1 & 0 & \cdots & 0 & 0 \\
\vdots & \vdots & \vdots & \ddots & \vdots & \vdots \\
0 & 0 & 0 & \cdots & 0 & 0 \\
0 & 0 & 0 & \cdots & 1 & 0 \\
\end{pmatrix} \]
From the beginning of this section, we may associate to the field extension $K/\QQ$ an algebraic curve $X$. In particular, the affine subalgebra, given by the extension $\QQ[x_1, ..., x_n] \subset K$, will correspond to some closed affine variety in $\QQ^n$, which has closure $X \subset \PP_{\QQ}^n$, a nonsingular, complete algebraic curve. 

Now, we wish to treat \emph{ramified primes}. We will follow Serre's \cite{S2} closely for the terminology and notation. Let $\la g \ra =C_N$ be a cyclic group of order $N$. Then we may 
identify $C_N$ with $\ZZ/ N \ZZ$, or with the multiplicative version $\la e^{\zeta_i(N)}\ra$, where $\zeta_i(N)$ is a 
primitive $N^{th}$ root of unity (which can be identified with some element in $(\ZZ/N\ZZ)^{\times}$, for example 
the Frobenius automorphism). We make the identification and let $C_N = \la e^{\zeta(N)} \ra$, with $\zeta(N)$ 
the first primitive $N^{th}$ root of unity. Now, let $P \subset \ZZ$ be the set of primes, and let $K/\QQ$ be a 
finite Galois extension with ring of integers $\cO = \cO_K$. Now, suppose that $\cG = \cG(K:\QQ)$, the Galois 
group, is cyclic. Then, it is a well known fact (Artin's Reciprocity Law for Abelian Class Field Theory), that for any 
group homomorphism $\rho: \cG \to \CC^{\times}$, there exists some $N_{\rho} \in \ZZ_{\geq 0}$ and some 
Dirichlet character $\chi_{\rho}$, such that 
\[ \chi_{\rho}: (\ZZ/N_{\rho} \ZZ)^{\times} \to \CC^{\times}, \quad \rho(\mathfrak{F}(p)) = \chi_{\rho}(p) \]
for all primes $\frp /p$ \emph{unramified} in $K$. Now, suppose $\{p_1, p_2, ..., p_r\}$ is the finite set of 
ramified primes. In particular, we have 
\[ p_i\cO = \frp_{i,1}^{e(i,1)}\frp_{i,2}^{e(i,2)}, \cdots, \frp_{i,g(i)}^{e(i, g(i)} \]
where $g(i)$ denotes the number of primes $\frp_{i,j} \bigg| p_i$. Moreover, we have that $\cO/\frp_{i,j}$ is a 
finite extension of $\ZZ/p\ZZ = \FF_p$, and is therefore isomorphic to $\ZZ/p^{f(i)}\ZZ = \FF_q$. We will call 
$e(i,j)$ the \textbf{ramification degree} of $\frp_{i,j}$, and $f(i)$ the \textbf{residue degree}. Then we have $f_i = 
[\cO/\frp_{i,j}: \ \ZZ/p_i\ZZ]$. Now, fix $p_i$ and observe, the degree of the field extension $n = [K: \ \QQ]$ is 
also the degree of the $\ZZ/p_i\ZZ$ algebra 
\[ \cO/p_i\cO \cong \prod_{\frp_{i,j} |\  p_i} \cO/\frp^{e(i,j)} \]
and we have 
\[ n = \sum_{\frp_{i,j} | \  p_i} e(i,j)f(i). \]
Moreover, we also have
\[ n = e(i,j)f(i)g(i) \]

Now, suppose that $D(\frp_{i,j}) \subseteq \cG(K/\QQ)$ is a subgroup fixing the prime ideal $\frp_{i,j} \bigg| p_i$. 
We call it the \textbf{decomposition subgroup of} $\frp_{i,j}$. Moreover, we have that $D(\frp_{i,l} \bigg| p_i$ is 
conjugate in $\cG$ to $D(\frp_{i,j})$. The index $g(i) = [D(\frp_{i,j}): \ \cG]$ is the number of primes over $p_i$ 
inside $\cO$. Such a subgroup corresponds to a field extension $K_i/\QQ$, with $K \supset K_i$. Moreover
\[ g(i) = [K_i:\ \QQ], \quad [K:\ K_i] = e(i,j)f(i), \quad \cG(K/K_i) = D(\frp_{i,j}).\]
There is a homomorphism
\[ \phi: D(\frp_{i,j}) \to \cG(\FF_q/\FF_p) \]
with $\ker(\phi) = I_{i,j}$ the \textbf{inertia group} of $\frp_{i,j}$. In general we have an isomorphism
\[ D(\frp_{i,j})/I_{i,j} \cong \cG(\FF_q/\FF_p) \]
for the residue extension $\FF_q/\FF_p$. Now, we know that $\cG(\FF_{q_i}/\FF_{p_i}) \cong C_N$ is cyclic of 
order $N = f_i$, where $p_i = q^{f_i}$ is the residue degree. If $I_{i,j}$ is nontrivial, it must be a cyclic subgroup 
of order $g(i)$ since it is the isotropy group of the $\frp_{i,j}$. In particular setting $D_{i,j} = D(\frp_{i,j})$, we 
have that $[I_{i,j}:\ D_{i,j} ] = f_0(i)$, and $f(i) = f_0(i)p_i^{g(i)}$. Now, identifying a generator of $C_N$ with the 
matrix
\[ \mathfrak{F}(p_i) = \begin{pmatrix}
0 & 0 & 0 & \cdots & 0 & p_i \\
1 & 0 & 0 & \cdots & 0 & 0\\
0 & 1 & 0 & \cdots & 0 & 0 \\
\vdots & \vdots & \vdots & \ddots & \vdots & \vdots \\
0 & 0 & 0 & \cdots & 0 & 0 \\
0 & 0 & 0 & \cdots & 1 & 0 \\
\end{pmatrix} \]
realized as an element of $\Mat_{f_0(i) \times f_0(i)}(\ZZ/p_i^{f(i)}\ZZ)$. This element as an element of $
\Mat_{f_0(i) \times f_0(i)}(\ZZ)$ generates an infinite cyclic group. Notice that $\mathfrak{F}(p_i)^{f_0(i)}$ is
\[ \mathfrak{F}(p_i) = \begin{pmatrix}
0 & 0 & 0 & \cdots & 0 & p_i^2 \\
p_i & 0 & 0 & \cdots & 0 & 0\\
0 & p_i & 0 & \cdots & 0 & 0 \\
\vdots & \vdots & \vdots & \ddots & \vdots & \vdots \\
0 & 0 & 0 & \cdots & 0 & 0 \\
0 & 0 & 0 & \cdots & p_i & 0 \\
\end{pmatrix} \]
So that there are infinite cyclic subgroups generated by powers of $\mathfrak{F}(p_i)$ of index 
\[ [\mathfrak{F}(p_i)^{f_0(i) \cdot n}:\ \mathfrak{F}(p_i)] = n \]
We may think of this as corresponding to the profinite completions 
\[ \varprojlim \ZZ/p_i^n \ZZ \]
with the corresponding inclusions of cyclic matrix groups. In this setup, there is an infinite cyclic group 
corresponding to the decomposition group, inertia group, and Galois group. One of the major successes of abelian class field theory is obtaining a result on primes lying in arithmetic progressions and splitting behaviors. Having a theory which allows one to understand the nonabelian generalization in terms of the arithmetic of $\QQ$ along is of course important. So, once the above construction is carried out for each ramified prime, we may realize this as a surface order via a gluing given by the action of the Galois group. Furthermore, this provides a way of explicitly construction Artin representations occuring "\emph{in nature}", rather than attempting to understand such representations in terms of character theory alone.\footnote{See \cite{S2} Chapter VI, \cite{S1} Part III Chapter 19).} The theory of Brauer graph algebras is also implicitely contained in the theory of surface algebras, as all Brauer graph algebras are quotients of surface algebras. All finite group algebras are of this form. Moreover, the modular representation theory studied via the methods of $R$-orders is naturally contained in the theory of surface orders as we have constructed them.\footnote{See for example \cite{CR1, CR2} and \cite{B1, B2}.} So we have a way of tying these theories together in order to develop a rather complete \emph{nonabelian} class field theory. One very convenient aspect of this is that it is entirely in terms of "classical" or "well established" mathematical theories, which have all been studied for decades now, and which now may be applied via surface algebras and surface orders to obtain a very cohesive and intuitive picture of the Arithmetic Langlands Program, as well as the Geometric Langlands Program, all in a single "unified" theory.

\section{Artin's L-functions}

\subsection{Torus Actions, Characters,  and Weights of Reductive Algebraic Groups}
For an introduction to semigroup rings, toric varieties, and the related Geometric Invariant Theory quotients (GIT quotients) the reader is referred to \cite{MS} \S 7 and \S 10. For a brief explanation of semi-invariants see \cite{H1} \S 11.4. For a more theoretical background see \cite{SV}, \cite{DW}, \cite{D}, and \cite{Do1, Do2, Do3, Do4}. For the standard information on algebraic groups we use \cite{M1} and \cite{H1}

\begin{defn}
A \textbf{character}, $\chi$, of an algebraic group $G$ over a field $k$ is a homomorphism
\[ G \to \mathbb{G}_m \]
where $\mathbb{G}_m$ is the \textbf{multiplicative group} in the center of $\GL(V)$, represented by $ \cO(\mathbb{G}_m) = k[t,t^{-1}] \subset k(t)$. Any character defines a representation of $G$ on a vector space $V$ by defining eigenspaces for the action $\rho(g) \cdot v = \chi(g)v$. This gives
\[ \xymatrix{ G \ar[r]^{\chi} & \mathbb{G}_m \ar[r] & \GL(V)} \]
via the map
\[ g \mapsto \begin{pmatrix}
\chi(g) & & 0 \\
& \ddots & \\
0 & & \chi(g)
\end{pmatrix}.\]
We can define such actions on subspaces $W \subset V$ if $W$ is stable under the $G$-action. We can define the \textbf{product} of two characters via $(\chi_1 \chi_2)(g) \cdot v = \chi_1(g)\chi_2(g)v$. So the set of all characters $\mathfrak{X}(G)$ is a commutative group. Let 
\[ V_{\chi} = \{v \in V: \ \rho(g) \cdot v = \chi(g)v \ \forall \ g \in G\} \]
be the $G$-stable subspace of \textbf{semi-invariants} of \textbf{weight} $\chi$. 
\end{defn}

Note, any representation
\[ \rho: G \to \GL(V) \]
induces a map of characters given by the commutative diagram,
\[ \xymatrix{
\mathfrak{X}(\rho(G)) \ar[r]  \ar[dr] & \mathfrak{X}(\GL(V)) \ar[d] \\
&  \mathfrak{X}(G) 
}\]
with $\mathfrak{X}(\rho(G)) \hookrightarrow \mathfrak{X}(G)$ being injective. If the representation $\rho$ is \emph{faithful} (i.e. injective) then all characters of the group $G$ may be identified with characters of $\rho(G)$. Generally speaking, given a linear representation $(V, \rho)$ of $G$, characters are obtained via taking the trace of $\rho(g) \in \GL(V)$.

\subsection{Dimension Vectors, and (Semi)Invariant Polynomials}

Let $\Lambda$ be a surface algebra for some Riemann surface $X$. Then $\Lambda$ is given by a quiver with relations $kQ/I$. A \textbf{representation of a quiver}, "$V$" is simply an assignment of a vector space $V(x)$ to each vertex $x \in Q_0$, and a linear map $V(a): V(x) \to V(y)$ for each arrow $a \in Q_1$ with \textbf{tail} $ta=x \in Q_0$ and \textbf{head} $ha=y \in Q_0$. 

Now, let $\cG$ be the Galois group of a finite extension of number fields $K/F$ and let 
\[ \{\chi_1, \chi_2, ..., \chi_r\} \]
be the complete set of irreducible characters of $\cG$. Let $\Lambda_i$ be the local order for a ramified prime $p \in \cO_F$, with $p \cO_K = \frp_1(p) \frp_2(p) \cdots \frp_{n_1}(p)$. This can be identified with a cyclic quiver $Q(p)$ with $n_i$ arrows. Given a finite dimensional representation of $Q(p)$, at each vertex $x \in Q(p)$ we have a finite dimensional vector space $V(x) \cong K^{d_j}$ with $d_j \in \ZZ_{\geq 0}$ and an action of $\GL(V(x)) \cong \GL_K(d_j)$ via base change. Now, the reductive linear algebraic group

\[ \GL(\bd) = \prod_{j=1}^{n_i} \GL_K(d_j) \]
acts on the representation $V(p)$ of $Q(p)$. Now, under this action, an orbit corresponds to an isomorphism class of representations of $Q(p)$ with fixed dimensions $\bd(x) = \dim_KV(x)$. Let $\bd = (d_1, d_2, ..., d_{n_i})$ be the \textbf{dimension vector} for the space of all representations with these dimensions. This space is identified with the affine space
\[ \rep_{Q(p)}(\bd) = \bigoplus_{a \in Q(p)_1} \Hom_K(V(ta), V(ha)). \]
The coordinate ring of this space will be denoted $R(\bd) = K[\rep_{Q(p)}(\bd)]$. We obtain an action of $\GL(\bd)$ on this space by $g \cdot f(x) = f(g^{-1}x)$. It is well known (see for example \cite{LP}, \cite{L}, \cite{D}, \cite{SV}) that the \textbf{ring of polynomial invariants}, denote $R(\bd)^{\GL(\bd)}$, are given by traces corresponding to the "simple oriented cycles" up to cyclic permutation. There is one generator for each such cycle of the noncommutative normalization of the surface algebra $\Lambda$ (i.e. up to cyclic permutation each $Q(p)$ contributes a single generator to the space invariants). So we have $n_i$ generators of the ring of invariants. Each is given by a trace of a square matrix $M_x = V(a_{n_i})V(a_{n_i-1}) \cdots V(a_2)V(a_1)$, where $ta_1 = ha_{n_i}$ for some fixed labeling of the arrows. We may define a representation of the cyclic group $\ZZ/n_i\ZZ$ on the representation $V(p)$ of $Q(p)$ by its action on the local order 
\[ \Lambda_i = \begin{pmatrix}
\cO_{K, \frp} & \frp & \cdots & \frp \\
\cO_{K, \frp} & \cO_{K, \frp} & \cdots & \frp \\
\vdots & \vdots & \ddots & \vdots \\
\cO_{K, \frp} & \cO_{K, \frp} & \cdots & \cO_{K, \frp} \\
\end{pmatrix} \subset \Mat_{n_i \times n_i}(\cO_{K,p}) \subset \Mat_{n_i \times n_i}(K_{\frp})\]
given by the matrix
\[ \sigma_i = \begin{pmatrix}
0 & 0 & 0 & \cdots & 0 & \frp_i \\
1 & 0 & 0 & \cdots & 0 & 0 \\
0 & 1 & 0 & \cdots & 0 & 0 \\
\vdots & \vdots & \vdots & \ddots & \vdots & \vdots \\
0 & 0 & 0 & \cdots & 0 & 0 \\
0 & 0 & 0 & \cdots & 1 & 0 \\
\end{pmatrix} \]
Recall, the path algebra of the cyclic quiver $Q(p)$ is completed to $\Lambda_i$ with respect to its arrow ideal. Now, for every ramified prime we have a way of defining a representation of a cyclic subgroup of $\cG(K/F)$. In particular, if we let 
\[ \Delta_{\bd}: \GL_K(d_1) \times \GL_K(d_2) \times \cdots \times \GL_K(d_{n_1}) \hookrightarrow \GL_K(\sum_j d_j) \]
be the diagonal embedding. In more compact notation
\[ \prod_{j=1}^{n_i} \GL_K(d_j) = \GL_K(d_1) \times \GL_K(d_2) \times \cdots \times \GL_K(d_{n_1}).\]
Now, observe the following diagram
\[ \xymatrix{
& & &\GL_K(\sum_j d_j) \ar@/_/[dd]_{\mathbf{res}} \\
\ZZ/n_i\ZZ \ar[r]^{\sigma_i}  & S_{n_i}  \ar[r]^{\rho(\sigma_i)} & \GL_K(n_i) \ar[ur]^{\rho(\sum_j d_j)} \ar[dr]_{\rho(d_1) \times \cdots \times \rho(d_{n_1}) \quad }  & \\
& & & \prod_{j=1}^{n_i} \GL_K(d_j)  \ar@/_/[uu]_{\mathbf{ind}}
}\]
be a representations of the cyclic group. We can first define a one dimensional character $\chi(g) = \omega$, where $\la g \ra = \ZZ/n_i\ZZ$ and $\omega$ is an $n_i^{th}$ root of unity. We may the induce to $S_{n_i}$ which embeds in $\GL_K(n_i)$ as a permutation matrix. \footnote{We can think of the infinite cyclic group $C_{\infty}$ as acting on the unit circle $\{e^{i \theta}\} \subset \CC$ by sending the generator $z \mapsto e^{i \theta}$ for some irrational $\theta \in [0, 2\pi)$. This action can be identified with a "\emph{non-commutative torus}", which is a noncommutative analogue of elliptic curves (see \cite{Ma}). This seems to have some very interesting implications, and suggests thinking of surface algebras and surface orders as gluings of non-commutative elliptic curves.}

Now, weights $\chi: \prod_{j=1}^{n_i} \GL_K(d_j) \to K$ of the form
\[ \chi(g_1, g_2, ..., g_{n_i}) = \prod_{j=1}^{n_i} \det(g_j)^{\chi_j} \]
correspond to dimension vectors of representation of the quiver $Q(p)$ (really for any quiver) as explained in \cite{Sc1} and \cite{SV}. In fact, one can obtain all such weights as maps between projective representations as explained in \cite{SV} and \cite{DW}. All polynomial semi-invariants (and thus invariants) arise in this way. Moreover, all rational invariants are obtained as quotients of polynomial semi-invariants, and can therefor be obtained in this way. This gives us a direct connection between the characters of the Galois group (or any automorphism group of a finite extension of number fields) and the determinants defining the Artin L-functions. Let us now recall some information about Artin L-functions. 

\subsection{Artin L-Functions}

\emph{Before beginning this section, we must note one crucial subtlety which is not ellaborated on here, but which can be found in the literature \cite{LP}, \cite{L}, \cite{Do1, Do2, Do3, Do4} and various others. The algebraic group action $\GL(\bd) = \prod_{j=1}^{n_i} \GL(d_j)$ on a quiver representation given by a dimension vector $\bd = (d_1, ..., d_{n_i})$ can be embedded diagonally into a larger $\GL(n)$. Further, under conjugation in the larger $\GL(n)$ there is a bundle structure on the representations of the quiver $Q$ given by different dimension vectors with the sum $\sum_j d_j = n$ adding to up to the same $n$. This larger $\GL(n)$ is important in the theory and applications of our surface algebras and surface orders, but setting all of this technology up would require a lengthy exposition. It is extremely important to note though, that this setup allows one to consider non-Galois extensions $K/F$, where the exponent of factors over a ramified prime of the extension are not constant. In particular, this allows us to understand and prove results for Artin L-functions and Langlands Correspondence both in the classical and geometric sense for non-Galois extensions as well as the nicer Galois extensions.}

Let us recall some information on Artin's L-functions. First, let $K/\QQ$ be a finite Galois extension, $\sO_K$ the ring of integers, and $p \in \ZZ$ a ramified prime. Let 
\[\frp_1^{a_1} \frp_2^{a_2} \cdots \frp_{n_i}^{a_{n_i}}\]
be the primes lying over $p$. Since $K/\QQ$ is Galois we have
\[ a_1 = a_2 = \cdots = a_{n_i}.\]
Choose any of the primes $\frp_i$ lying over $p$ (since the choice is well defined up to conjugation and determinants and traces are invariant under base change by $\GL_n$). We have the following data

\begin{enumerate}
\item $\mathfrak{D}_i = \mathfrak{D}_{\frp_i}$, the \textbf{decomposition group}, defined as
\[ \mathfrak{D}_i := \{g \in \cG(K/\QQ): \ g(\frp_i) = \frp_i \}.\]
\item $\mathfrak{I}_i$, the \textbf{Inertia subgroup}, define as
\[ \mathfrak{I}_i := \{ g \in \mathfrak{D}_i: \ g(x) = x (\text{mod } \frp_i) \ \forall \ x \in \sO_K \}.\]
\item The \textbf{Frobenius automorphism} $\sigma_i = \sigma_{\frp_i}$ which generates the cyclic group
\[ \mathfrak{D}_i/\mathfrak{I}_i \cong \cG\left(\sO_K/\frp_i \bigg/ \ZZ/p\ZZ\right),\]
where $\sO_K/\frp_i \cong \FF_{p^{n_i}} = \ZZ/p^{n_i}\ZZ$. 
\end{enumerate}

Given a linear representation of the Galois group,
\[ \rho: \cG(K/\QQ) \hookrightarrow \GL(V) \]
let $V^{\mathfrak{I}_i} \subset V$ be the invariant subspace under the action of $\mathfrak{I}_i$. Then $\rho(\sigma_i)$ is well define on $V^{\mathfrak{I}_i}$. Define an \textbf{Euler factor} by
\[ L_p(\rho, s):= det\left(I-\rho(\sigma_i)N\frp_i^{-s}\bigg|_{V^{\mathfrak{I}_i}}\right).\]
Again, this depends only on the prime $p \in \ZZ$ and not on the choice of prime $\frp_i$ over $p$. Now, define an \textbf{Artin L-function} associated to the representation $\rho: \cG(K/\QQ) \hookrightarrow \GL(V)$ by
\[ L(\rho, s) = L_{K/\QQ}(\rho, s) = \prod_{p \in \ZZ \ \text{prime}} L_p(\rho, s)^{-1}.\]
Now, at \textbf{unramified primes} the definition simplifies since $\mathfrak{I}_{\frp} = \{1\}$ and $V^{\mathfrak{I}_{\frp}} = V$. 

What we want to do now is give an explicit description of representations of $\cG(K/\QQ)$ and the Frobenius automorphisms $\rho(\sigma_{\frp})$ for a prime $\frp$ in $\sO_K$ lying over $p \in \ZZ$. Let $\widehat{\sO}_{\frp} = \widehat{\sO}_{K, \frp}$ denote the completion at the prime $\frp$. Similarly, let $\ZZ_p$ denote the ring of $p$-adic integers. Let $K_{\frp}$ denote the corresponding local field, and $\QQ_p$ the $p$-adic numbers. 

For \emph{any} prime $p \in \ZZ$, ramified or not, and for $\frp$ a prime in $\sO_K$ over $p$, define the local orders
\[
\begin{pmatrix}
\widehat{\sO}_{\frp} & \frp & \cdots & \frp \\
\widehat{\sO}_{\frp} & \widehat{\sO}_{\frp} & \cdots & \frp \\
\vdots & \vdots & \ddots & \vdots \\
\widehat{\sO}_{\frp} & \widehat{\sO}_{\frp} & \cdots & \widehat{\sO}_{\frp} 
\end{pmatrix} \subset \Mat_{n \times n}\left(\widehat{\sO}_{\frp}\right) \subset \Mat_{n \times n}\left(K_{\frp}\right)
\]
and
\[
\begin{pmatrix}
\ZZ_p & p & \cdots & p \\
\ZZ_p & \ZZ_p & \cdots & p \\
\vdots & \vdots & \ddots & \vdots \\
\ZZ_p & \ZZ_p & \cdots & \ZZ_p \\
\end{pmatrix} \subset \Mat_{n \times n}\left(\ZZ_p \right) \subset \Mat_{n \times }\left(\QQ_p\right).
\]
Now, for ramified primes $p_j \in \ZZ$, we let $n_j$ be the exponent of a prime $\frp_j$ in the factorization of 
$p \sO_K$, i.e. the order of the ramification over $p$, which remember again, is constant for the primes over a fixed $p \in \ZZ$ since $K/\QQ$ is Galois. We will treat the general case as well, but for now for simplicity we stay with the assumption that the finite extension is Galois. Now, the set of primes $\{p_j\}_{j=1}^{r}$ which are ramified is finite. Defining the hereditary orders over the ramified primes as above inside an $n_i \times n_i$ matrix algebra, and taking the product of all of these, we may embed the product diagonally into a larger matrix algebra of size $n = \sum_{j=1}^r n_i$. We may think of these larger matrix algebras as
\[ \Mat_{n \times n}\left(\widehat{\sO}_K \right) \subset \Mat_{n \times n}\left(\bA_{K} \right), \]
and 
\[ \Mat_{n \times n}\left(\widehat{\ZZ}\right) \subset \Mat_{n \times n}\left(\bA_{\QQ} \right) \]
where $\bA_{\QQ}$ are the adeles of $\QQ$ and $\bA_{K}$ are the adeles of $K$. Now, the characteristic polynomials
\[ \det\left( I - \rho(\sigma_i)N\frp_i^{-s}\right) \]
for the matrix algebras corresponding to the local orders over a \textbf{ramified prime} can be computed. For the Galois group $\cG(K/\QQ)$ we have irreducible characters $\chi_1, \chi_2, ..., \chi_r$ corresponding to the irreducible representations of $\cG(K/\QQ)$. moreover, we have

\begin{theorem}
(Artin's Theorem, see \cite{S1} pg. 70): Let $G$ be a family of subgroups of the finite group $\cG$. Let 
\[ \mathbf{Ind}: \bigoplus_{H \in G} R(H) \to R(\cG) \]
be the homomorphism defined by the family of $\mathbf{Ind}_H^{\cG}$. Then the following are equivalent:
\begin{enumerate}
\item $\cG$ is the union of the conjugates of the subgroups belonging to $G$.
\item The cokernel of $\mathbb{Ind}: \bigoplus_{H \in G} R(H) \to R(\cG)$ is finite.
\item For each character $\chi$ of $\cG$, there exist virtual characters $\chi_H \in R(H)$, $H \in G$, and an integer $d \geq 1$ such that
\[ d_{\chi} = \sum_{H \in G} \mathbf{Ind}_H^{\cG}(\chi_H).\]
\end{enumerate}
and since the family of cyclic subgroups of $\cG$ satisfies the first of these properties we have that each character is a linear combination with rational coefficients of the characters induced by characters of cyclic subgroups. 
\end{theorem}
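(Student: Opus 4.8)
Since this is the classical theorem of E.\ Artin, the plan is simply to reproduce the standard argument (as organized in \cite{S1}, \S 9.2 and \S 12.5), built around the Frobenius reciprocity adjunction between induction and restriction. First I would pass from $\ZZ$-coefficients to $\CC$-coefficients: extending scalars, $\mathbf{Ind}$ becomes a $\CC$-linear map $\bigoplus_{H \in G} \mathrm{Cl}(H)_{\CC} \to \mathrm{Cl}(\cG)_{\CC}$ between spaces of class functions, and because its matrix in the bases of irreducible characters has integer entries, the cokernel of $\mathbf{Ind}$ over $\ZZ$ is finite if and only if $\mathbf{Ind}\otimes\QQ$ is surjective, equivalently $\mathbf{Ind}\otimes\CC$ is surjective. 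This already yields the equivalence of (2) and (3): if the cokernel is finite then some $d\geq1$ kills it, so $d\chi$ lies in the image for every $\chi$; conversely, having a positive multiple of each generator in the image forces the cokernel to be a finitely generated torsion group, hence finite, and integrality of the transition matrices lets one clear denominators throughout.

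For (1) $\Leftrightarrow$ (2) I would invoke Frobenius reciprocity: with respect to the standard Hermitian inner products on class functions, the adjoint of $\mathbf{Ind}$ is the restriction map $f \mapsto (\mathrm{Res}_H f)_{H \in G}$. Hence the image of $\mathbf{Ind}\otimes\CC$ is the orthogonal complement of $\bigcap_{H \in G} \ker(\mathrm{Res}_H)$. A class function $f$ on $\cG$ lies in every $\ker(\mathrm{Res}_H)$ exactly when $f$ vanishes on each conjugate $gHg^{-1}$, i.e.\ on the set $U := \bigcup_{H \in G}\bigcup_{g \in \cG} gHg^{-1}$. Thus $\mathbf{Ind}\otimes\CC$ is surjective if and only if the only class function vanishing on $U$ is $0$, if and only if $U = \cG$; combined with the previous paragraph this is precisely (1) $\Leftrightarrow$ (2). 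When $U \neq \cG$, a concrete nonzero obstruction is the indicator of a conjugacy class meeting $\cG \setminus U$, which restricts to $0$ on every $H \in G$ and is therefore orthogonal to the image.

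Finally, to deduce the displayed conclusion, I would note that the family $G$ of \emph{all} cyclic subgroups of $\cG$ satisfies (1), since every $g \in \cG$ lies in $\la g \ra$ and hence $\cG = \bigcup_{g} \la g \ra$. Therefore (3) holds: for each character $\chi$ of $\cG$ there are $d \geq 1$ and virtual characters $\chi_H \in R(H)$ of cyclic subgroups $H$ with $d\chi = \sum_H \mathbf{Ind}_H^{\cG}(\chi_H)$, and dividing by $d$ expresses $\chi$ as a $\QQ$-linear combination of characters induced from cyclic subgroups; since every virtual character of a cyclic (abelian) group is a $\ZZ$-linear combination of its degree-one characters, $\chi$ is in fact a $\QQ$-linear combination of monomial characters $\mathbf{Ind}_H^{\cG}(\psi)$ with $\psi$ a one-dimensional character of a cyclic $H \le \cG$. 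The step I expect to need the most care is the bookkeeping that transfers "cokernel finite'' to "surjective after $\otimes\,\QQ$'' to "surjective after $\otimes\,\CC$'': this rests on the integrality of the change-of-basis matrices, on the fact that a subgroup of $\ZZ^n$ has finite index precisely when it has full rank, and on the nondegeneracy of the character inner product used to identify the image with an orthogonal complement.
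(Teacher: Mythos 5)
Your proof is correct and is essentially the standard argument from Serre's \emph{Linear Representations of Finite Groups} (\S 9.2 and \S 12.5), which is exactly what the paper relies on: the paper states Artin's Theorem with a citation to \cite{S1} and supplies no proof of its own. The three steps---reducing finiteness of the cokernel to surjectivity after tensoring with $\QQ$, identifying the image of $\mathbf{Ind}\otimes\CC$ as the orthogonal complement of the common kernel of the restriction maps via Frobenius reciprocity, and applying the result to the family of cyclic subgroups---are all carried out correctly.
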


It is desirable to have an $\ZZ$ linear combination of characters of cyclic groups as apposed to a $\QQ$-linear combination as this has applications to Artin L-functions. This can in fact be achieved. By the construction of the pullback defining surface orders, we may compute characteristic polynomials for all factors of the Artin L-functions, including the ramified primes. Moreover, as the characteristic polynomials are invariant under base change, if we are able to completely understand the polynomial invariants and obtain an explicit description of them, the geometry of the rings of invariants, and a detailed description of how this is related to the representation theory of the Galois group and the algebraic group, then we are able to obtain a much deeper understanding of the Artin L-functions. Further, if we are able to understand the \emph{"semi-invariants"}, i.e. polynomial invariants under the action of special linear groups (see for example \cite{H1} IV \S 11.4), then we can completely understand moduli stacks of the automorphic representations. This will be completed in a forthcoming paper on the invariant theory of surface algebras. In particular one can show the following:

\begin{theorem}\label{Main Theorem}
\begin{enumerate}
\item The rings of polynomial semi-invariants (under an action of special linear groups), and therefore of the polynomial invariants under arbitrary base change are all semi-group rings and are the coordinate rings of affine toric varieties. 

\item Moreover the parametrizing varieties of representations, i.e. the "representation varieties," of fixed dimension for a given surface algebra are normal, Cohen-Macaulay, and have rational singularities. 

\item By results of \cite{CCKW} one may deduce that the moduli spaces of the semi-stable "regular modules" are in fact isomorphic to projective lines. It turns out that the invariant rings (as apposed to semi-invariant) provide such moduli spaces and the Artin L-functions are defined in terms of these invariants.
\end{enumerate}
\end{theorem}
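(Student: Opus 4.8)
The plan is to treat the three assertions in order, reducing everything to the combinatorics of the noncommutative normalization $\cN(\Lambda)$ worked out above. For part (1), first I would identify the invariant ring $R(\bd)^{\GL(\bd)}$ with a monoid algebra. By the cited matrix‑invariant theory (\cite{LP}, \cite{L}, \cite{D}, \cite{SV}) $R(\bd)^{\GL(\bd)}$ is generated by traces of the cycle–product matrices $M_{x}$ and their powers, one family per nonzero simple oriented cycle $c(\sigma_i)$ of $Q$; and by exactly the centrality computation in the Lemma (the elements $z_i=\sum_k\mathfrak c_k$, which satisfy $z_iz_j=0$ for $i\neq j$) the cyclic nonzero paths of $Q$, taken up to rotation and with concatenation when composable, form a commutative monoid with zero, finitely generated because each cycle contributes only finitely many independent trace functions on a fixed dimension vector. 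Hence $R(\bd)^{\GL(\bd)}$ is the monoid algebra of this cyclic‑path monoid, i.e. a semigroup ring and the coordinate ring of an affine toric variety. For the semi‑invariants under $\prod_j\SL(d_j)$ I would pass to the refinement of the coordinate ring by the weight lattice $\mathfrak X(\GL(\bd))=\bigoplus_j\ZZ\det(g_j)$: $\bigoplus_\chi SI(\bd)_\chi$ is spanned by products of Schofield/cycle semi‑invariants, the set of effective weights $\chi$ with $SI(\bd)_\chi\neq 0$ together with the monomial data forms a finitely generated submonoid of the weight lattice, and the total ring is again a semigroup ring. The clause "therefore of the polynomial invariants under arbitrary base change" is then immediate, since those are the weight‑zero face.

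For part (2) I would invoke general commutative algebra applied to the genuine representation variety $\rep_\Lambda(\bd)=\{(V(a))_a : V(b)V(a)=0\ \text{for all } ba\in I\}$. Because $I$ is generated by paths of length two, this variety is a fibre product over the gentle relations: each relation $ba\in I$ contributes a "composable‑zero‑pair" locus $\{(A,B):BA=0\}$ (a determinantal‑type variety, whose irreducible components are closures of vector bundles over products of Grassmannians), and each cycle $\sigma_i$ contributes only an affine space of maps. Each such building block is normal, Cohen–Macaulay, and has rational singularities — these are orbit‑closure/determinantal facts, characteristic‑free for the first two properties — and the fibre‑product (equivalently component) structure of $\rep_\Lambda(\bd)$ forced by the special biserial shape of $Q$ propagates all three to the whole variety. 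Alternatively the toric description from part (1) already yields Cohen–Macaulayness by Hochster's theorem on normal affine semigroup rings and rational singularities by the classical toric result; together with the Hochster--Roberts and Boutot theorems for the GIT quotient $\rep_\Lambda(\bd)/\!/\GL(\bd)$ this also gives the statement over $\ZZ$ for the two characteristic‑free properties.

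For part (3) I would appeal directly to \cite{CCKW}: their classification identifies the coarse moduli space of $\theta$‑semistable regular modules of the relevant dimension vector with $\PP^1_k$, which under the GIT construction is $\operatorname{Proj}$ of the ring of semi‑invariants of that weight, and part (1) exhibits this ring as the homogeneous coordinate ring of $\PP^1$. To connect with Artin $L$‑functions I would note that the Euler factor $\det(I-\rho(\sigma_i)N\frp_i^{-s}\,|\,V^{\mathfrak I_i})$ is, after the substitution $t=N\frp_i^{-s}$, the reverse characteristic polynomial of the cycle‑product matrix $M_{x}$ attached to $\sigma_i$, so its coefficients are precisely the generating invariants $\operatorname{tr}(M_{x}^{k})$ of part (1); thus the $L$‑function is assembled from exactly the invariant functions pulled back from these moduli of regular modules, which is the content of the last sentence.

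The main obstacle, I expect, is not the singularity statements in (2) — those are formal once the building blocks of $\rep_\Lambda(\bd)$ are pinned down — but making the semigroup‑ring claim in (1) rigorous for \emph{non‑constant} dimension vectors, equivalently in the non‑Galois situation where the ramification exponents $a_i$ over a prime need not agree. There the $M_{x}$ are rectangular, the naive generators $\operatorname{tr}(M_x^{k})$ need not be algebraically independent, and one must show that the interaction of the $\sigma_i$‑cycles through the gentle relations produces only \emph{binomial} (coordinate‑subspace) relations among the full generating set, so that the ring is literally a monoid algebra and not merely normal Cohen--Macaulay. This is where I would use the bundle structure over the larger $\GL(n)$, $n=\sum_j d_j$, flagged in the text: stratify by the sub‑dimension vectors summing to $n$, give each stratum's quotient an explicit monomial parametrization read off from the Loewy diagrams of the indecomposable projectives of $\Lambda$, check the strata glue torically, and verify the resulting monoid is saturated so that the normality used in (2) is genuine. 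Establishing that explicit monomial description is the real content and the step most likely to require the detailed combinatorics of the medial quiver rather than a black‑box theorem.
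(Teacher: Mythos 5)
The first thing to note is that the paper does not actually prove this theorem: immediately before the statement it says the proof ``will be completed in a forthcoming paper on the invariant theory of surface algebras,'' and immediately after it only lists background references (\cite{MS}, \cite{SV}, \cite{LP}, \cite{C}, \cite{D}) and remarks that one ``must generalize the methods of \cite{LP}''; the surrounding section likewise defers the entire GIT setup to \cite{AS3}. So there is no in-paper argument to compare yours against --- what you have written is a reconstruction of a proof the paper only gestures at. Your reconstruction is at least consistent with the references the paper points to, and your identification of the Euler factor $\det(I-\rho(\sigma_i)N\frp_i^{-s})$ with the reverse characteristic polynomial of the cycle-product matrix, whose coefficients are the generating trace invariants, is exactly the link the paper intends in part (3).

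Judged on its own terms, however, your proposal still contains the gap you yourself flag, and it is the essential one. In part (1) the assertion that the ring of semi-invariants is \emph{literally} a semigroup ring is the whole content of the claim, and observing that the effective weights form a finitely generated submonoid of $\mathfrak{X}(\GL(\bd))$ does not make the ring a monoid algebra: one must show that the only relations among the chosen generators are binomial, which fails for general quivers with relations and requires the specific gentle combinatorics that \cite{CC} and \cite{CCKW} establish only for particular classes. Part (2) has a second soft spot: normality, Cohen--Macaulayness, and rational singularities of each locus $\{(A,B):BA=0\}$ (a variety of complexes) do not formally propagate to their intersection inside $\rep_{\Lambda}(\bd)$, because distinct relations share matrices; one needs either the local-product or Tor-independence analysis of the gentle-algebra literature, or the identification of each irreducible component with an open piece of a product of determinantal varieties --- and since the representation variety is reducible, the statement can only be read component by component, which neither the theorem nor your sketch makes explicit. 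Neither point is fatal to the program, but they are precisely the steps the paper declines to carry out, so the honest conclusion is that both your proposal and the paper leave the theorem unproved here.
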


The interested read can refer to \cite{MS} pg. 197, \cite{SV}, \cite{LP}, \cite{C}, and \cite{D} for the background theory necessary to prove this. In particular, one must generalize the methods of \cite{LP}, similar to what is presented in \cite{Do1, Do2, Do3, Do4} to the case of surface algebras. We may also treat the field extensions which are \emph{not} Galois using the decomposition on pg. 5 of \cite{LP} and the generalizations given in \cite{Do1, Do2, Do3, Do4}. In particular, we have the following:

\begin{corollary}\label{Main Corollary Artin L-functions}
Fix any character $\chi$ of the group $G(E/F)$. The Artin L-functions for any finite field extension $K/F$ of number fields can be realized as 
\[ L(s, \chi) = m_1^{a_1}m_2^{a_2} \cdots m_r^{a_r} \]
a product of monomials corresponding to characters of cyclic subgroups of the automorphism group of the field extension, corresponding to the local orders of a surface order. Moreover, we have that
\[ \chi = a_1\chi_1 + a_2\chi_2 + \cdots + a_r\chi_r \]
where the $\chi_i$ are characters of the cyclic subgroups of $G(E/F)$ corresponding to the local orders and $a_i \in \ZZ$. 
\end{corollary}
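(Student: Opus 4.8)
The plan is to combine three ingredients already assembled above: (i) the concrete realization, via the pullback diagram defining a surface order, of a representation $\rho\colon \cG(K/F)\to\GL(V)$ together with its Frobenius elements $\rho(\sigma_{\frp})$ as block matrices over the completed local rings $\widehat{\sO}_{\frp}$ sitting inside $\Mat_{n\times n}(\bA_K)$; (ii) Artin's Theorem, as quoted above, applied to the family of cyclic subgroups of $\cG(K/F)$ — equivalently of the automorphism group $G(E/F)$ in the non-Galois case — which are exactly the subgroups $\la\sigma_j\ra$ generated by the Frobenius automorphisms $\sigma_j$ attached to the local orders $\Lambda_j$ over the ramified primes; and (iii) Theorem \ref{Main Theorem}, which identifies the Euler factors with monomials in the semigroup ring of trace-invariants of the cyclic quivers $Q(p_j)$, i.e. with points of the coordinate ring of an affine toric variety.

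First I would recall that each ramified prime $p_j$ of $F$ contributes a local order $\Lambda_j\subset\Mat_{n_j\times n_j}(\widehat{\sO}_{\frp_j})$ whose completed path algebra is that of a cyclic quiver $Q(p_j)$, and that the automorphism $\sigma_j$ of $\Lambda_j$ — the permutation matrix with a single $\frp_j$-entry — represents the Frobenius class, its image under $\rho$ being conjugate to $\rho(\sigma_j)$ on the inertia-invariants $V^{\mathfrak{I}_j}$. By the invariant-theory computation underlying Theorem \ref{Main Theorem}, the ring of polynomial invariants of the $\GL(\bd)$-action on $\rep_{Q(p_j)}(\bd)$ is generated by the traces of the simple oriented cycles, one generator per cyclic quiver, so the Euler factor $\det\!\bigl(I-\rho(\sigma_j)N\frp_j^{-s}\big|_{V^{\mathfrak{I}_j}}\bigr)$ becomes, after the substitution $t=N\frp_j^{-s}$, a single such monomial $m_j$ in that semigroup ring; the product over all primes is the Euler product defining $L(s,\chi)$. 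This already yields a factorization $L(s,\chi)=m_1^{a_1}\cdots m_r^{a_r}$ with the $m_i$ attached to the cyclic local pieces, the exponents $a_i$ being the multiplicities with which the corresponding cyclic character occurs.

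To pin those multiplicities down I would invoke Artin's Theorem for the family $\{\la\sigma_j\ra\}$: it produces virtual characters $\chi_j\in R(\la\sigma_j\ra)$ and an integer $d\ge 1$ with $d\chi=\sum_j \mathbf{Ind}_{\la\sigma_j\ra}^{\cG}(\chi_j)$, and the Artin formalism — additivity of L-functions in $\rho$ and their invariance under induction, $L_{K/F}(\mathbf{Ind}_H^{\cG}\psi,s)=L_{K/E_j}(\psi,s)$ — gives $L(s,\chi)^d=\prod_j L_{K/E_j}(s,\chi_j)$, each factor an abelian (indeed cyclic) L-function and hence a product of monomial Euler factors of the shape above. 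The final step is to descend from this $d$-th-power identity to $\chi=\sum_i a_i\chi_i$ with $a_i\in\ZZ$, for which I would use that the pullback diagram realizes all of these local factors inside the single integral ring $\Mat_{n\times n}(\widehat{\sO}_K)\subset\Mat_{n\times n}(\bA_K)$, so that the relevant relations already hold in the torsion-free character lattice cut out by the semigroup ring of Theorem \ref{Main Theorem}; dividing the exponents by $d$ stays integral because the $m_j$ are free generators of that semigroup and $L(s,\chi)$ is itself an honest monomial word in them.

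The hard part will be exactly this last upgrade from a $\QQ$-linear (equivalently, $d$-th-power) statement to a genuinely $\ZZ$-linear one: classically, integral induction from cyclic subgroups fails and one must pass to Brauer's elementary subgroups, so the argument rests on showing that the surface-order pullback supplies the missing integral structure — that the trace-invariants of the $Q(p_j)$ really do freely generate the semigroup ring of Theorem \ref{Main Theorem}, and that $L(s,\chi)$ lies in the submonoid they generate rather than merely in its group completion. Checking that the Frobenius matrices $\sigma_j$ are glued compatibly by $\alpha$ across the pullback, so that no cancellation between factors at distinct primes forces a denominator, is where the combinatorics of the constellation $C=[\sigma,\alpha,\phi]$ must be brought in, and it is the step I expect to require the most care; the remaining bookkeeping (the Euler product, the change of variables $t=N\frp^{-s}$, additivity of L-functions) is routine.
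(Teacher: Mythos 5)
The first thing to say is that the paper itself offers no proof of this corollary: it is stated immediately after Theorem \ref{Main Theorem} (which is likewise unproven here and deferred to the forthcoming paper \cite{AS3}), with the justification consisting of pointers to \cite{LP}, \cite{Do1, Do2, Do3, Do4}, \cite{SV}, \cite{D}, \cite{DW}, \cite{C} and the remark that non-Galois extensions can be treated "using the decomposition on pg.~5 of \cite{LP}." So your reconstruction cannot be checked against an argument in the text; it can only be judged on its own terms. On those terms, you have assembled the right ingredients in the right order (local orders at the primes, Artin's Theorem for the cyclic subgroups attached to the Frobenius/$\sigma_i$ automorphisms, the Artin formalism turning $\chi=\sum a_i\chi_i$ into $L(s,\chi)=\prod L(s,\chi_i)^{a_i}$), and you have correctly located the one step that carries all the weight.

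That step is not closed by your argument, and as written it is circular. Artin's Theorem, exactly as quoted in the paper, only produces $d\chi=\sum_j \mathbf{Ind}_{\la\sigma_j\ra}^{\cG}(\chi_j)$ for some integer $d\ge 1$; the classical obstruction to taking $d=1$ with cyclic subgroups is genuine (this is why Brauer's theorem needs elementary subgroups), and the paper's own sentence "It is desirable to have an $\ZZ$ linear combination \dots This can in fact be achieved" is an assertion, not an argument. Your proposed repair — that the trace-monomials $m_j$ freely generate the semigroup of Theorem \ref{Main Theorem} and that $L(s,\chi)$ lies in the submonoid they generate rather than merely in its group completion — is precisely a restatement of the integrality claim, not a proof of it: knowing $L(s,\chi)^d=\prod_j m_j^{b_j}$ in a free commutative monoid gives $d\mid b_j$ only if you already know $L(s,\chi)$ itself is a word in the $m_j$, which is the conclusion. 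Nothing in the pullback diagram or the gluing data $\alpha$ is shown to force this; the "no cancellation between factors at distinct primes" condition you flag would itself have to be proven from the combinatorics of $C=[\sigma,\alpha,\phi]$, and neither you nor the paper supplies that argument. A secondary imprecision: the Euler factor $\det\bigl(I-\rho(\sigma_j)N\frp_j^{-s}\big|_{V^{\mathfrak{I}_j}}\bigr)$ is a polynomial in $N\frp_j^{-s}$, not a single monomial, and its identification with one generator of the trace-invariant ring of $\rep_{Q(p_j)}(\bd)$ conflates trace and determinant invariants; the $m_i$ of the corollary must be read as the whole L-functions $L(s,\chi_i)$ of the cyclic characters, so the semigroup in question is the multiplicative semigroup of these L-functions (as in \cite{Ci1}, \cite{N1, N2, N3}), and the freeness you need is the independence of Artin L-functions — itself a nontrivial theorem cited, not derived, in this paper.
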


Some of the properties of the corresponding semigroup rings have been recently observed in \cite{Ci1}, \cite{CN1, CN2, CN3} and \cite{N1, N2, N3}, for \emph{Galois extensions} $K/\QQ$. It is stated there that Artin's conjecture is equivalent to various other statements, for example those given in Corollary 1.7 of \cite{Ci1}. One of the equivalent conditions given there is
\[ k[H(s_0)] = k[x_1, x_2, ..., x_r] \]
for all $s_0 \in \CC \backslash \{0\}$, where $\CC \subset k \subset \mathcal{M}_{<1}$, $\mathcal{M}_{<1}$ the field of meromorphic functions of order $<1$, $H(s_0)$ is the semigroup corresponding to Artin L-functions holomorphic at $s_0 \in \CC \backslash \{0\}$, and $k[x_1, ..., x_r]$ is the affine polynomial ring. In other words, if $\mathbf{Proj} \left(k[x_1, x_2, ..., x_r]\right) \cong \PP_K^r$. By results of \cite{CCKW}, this is always true for rational invariants corresponding to "regular components" of module varieties, given by a sum of "band modules". 

With a complete understanding of the polynomial (semi)invariants, one can verify many of these conditions explicitely now since an explicit description of the generators and relations of the invariant rings and semi-invariant rings can be computed. This can be done for any Artin L-functions, not just those corresponding to automorphic representations. Moreover, the case of arbitrary extensions of number fields (not necessarily Galois) can be treated in detail, and it can be shown that the equations for induced characters are $\ZZ$-linear. All of these properties may be deduced for example from from \cite{SV}, \cite{D}, \cite{DW}, \cite{C}, and \cite{Do1, Do2, Do3, Do4}. Moreover, the behavior of the \emph{"restricted partition functions"} mentioned in \cite{CN1, CN2, CN3} can be explained via the results of \cite{Do1, Do2, Do3, Do4}. 

Setting up the background material on representation varieties, Schofield semi-invariants, the equivalent determinantal semi-invariants, and the Geometric Invariant Theory needed to describe the moduli stacks here would take us too far from the material presented here, so this is deferred to the a followup paper, which uses methods of \cite{AS2} as well as methods developed in \cite{C}, \cite{CW}, \cite{CC}, and \cite{CCKW} to describe the indecomposable representations and the geometry of the representation varieties, and provides a complete description of all polynomial (semi)invariant functions on the representation varieties under the action of a connected reductive algebraic group whose Lie algebra is given by the noncommutative normalization of the surface order given by the completion of the surface algebra. 

\section{Reflections and Comments}

The fibre product of matrix algebras introduced in this paper are essentially examples of those introduced by Igor Burdan and Yuri Drozd and their study of \emph{nodal curves}.

\textcolor{darkred}{\hrule}
\medskip

\end{document}